\newtheorem{theorem}{Theorem}[section]
\newtheorem{prop}[theorem]{Proposition}
\newtheorem{lemma}[theorem]{Lemma}
\newtheorem{defn}[theorem]{Definition}
\theoremstyle{definition}
\newtheorem{remark}[theorem]{Remark}
\newcounter{tenumerate}
\def\P{\mathbb{P}}
\renewcommand{\epsilon}{\varepsilon}
\newcommand{\e}{\varepsilon}
\DeclareMathOperator{\var}{Var} \DeclareMathOperator{\Cov}{Cov}
\newcommand{\E}{{\mathbb E}}
\newcommand{\remove}[1]{}
\renewcommand{\le}{\leqslant}
\renewcommand{\ge}{\geqslant}
\renewcommand{\leq}{\leqslant}
\renewcommand{\geq}{\geqslant}
\newcommand{\C}{\mathcal{C}}
\def\XXint#1#2#3{{\setbox0=\hbox{$#1{#2#3}{\int}$}
\vcenter{\hbox{$#2#3$}}\kern-.5\wd0}}
\def \b {\zeta}
\def \B {{\cal B}}
\def \BD {{\cal BD}}
\def \Cov {{\rm Cov}}
\def \e {\varepsilon}
\def \d {\delta}
\def \E {{\mathbb{E}}}
\def \k {\kappa}
\def \LR {{\cal LR}}
\def \P {{\mathbb{P}}}
\def \Var {{\rm Var}}
\def \s {\sigma}
\def \UD {{\cal UD}}
\def \( {\left( }
\def\) {\right) }
\def\[ {\left[}
\def\]{\right]}
\begin{document}

\title{{\bf Non-universality for first passage percolation on the exponential of log-correlated Gaussian fields}}

\author{ Jian Ding\thanks{Partially supported by NSF grant DMS-1313596, and NSF of China 11628101.} \\ University of Chicago \and Fuxi Zhang\thanks{Supported by NSF of China 11371040.}  \\
Peking University
}
\date{}

\maketitle

\begin{abstract}

We consider first passage percolation (FPP) where the vertex weight is given by the exponential of two-dimensional log-correlated Gaussian fields. Our work is motivated by understanding the discrete analog for the random metric associated with \emph{Liouville quantum gravity} (LQG), which roughly corresponds to the exponential of a two-dimensional Gaussian free field (GFF).

The particular focus of the present paper is an aspect of universality for such FPP among the family of log-correlated Gaussian fields. More precisely, we construct a family of log-correlated Gaussian fields, and show that the FPP distance between two typically sampled vertices (according to the LQG measure) is $N^{1+ O(\epsilon)}$, where $N$ is the side length of the box and $\epsilon$ can be made arbitrarily small if we tune a certain parameter in our construction. That is, the exponents can be arbitrarily close to $1$. Combined with a recent work of the first author and Goswami on an upper bound for this exponent when the underlying field is a GFF, our result implies that such exponent is \emph{not} universal among the family of log-correlated Gaussian fields.
\end{abstract}

\section{Introduction}
For an $N\times N$ box $V_N\subseteq \mathbb Z^2$ with left bottom corner at the origin, we consider a log-correlated Gaussian field $\{\varphi_{N,v}: v\in V_N\}$ (see below for precise definitions). The main object investigated in the present article is the first passage percolation (FPP) on the exponential of  $\{ \varphi_{N,v}: v\in V_N \}$. More precisely, for $\gamma>0$ and $u, v\in V_N$, we define the FPP distance by
\begin{equation}\label{eq-def-LQG}
d_\gamma(u, v) = \min_{P} \sum_{w\in P} e^{\gamma \varphi_{N, w}}\,,
\end{equation}
where the minimum is taken over all paths in $V_N$ connecting $u$ and $v$.

Our motivation is from the two-dimensional Liouville quantum gravity (LQG) introduced in \cite{P81}. The mathematical model for LQG  can be formally described as a Riemannian ``manifold" with tensor of the form
\begin{equation}\label{eq-LQG-tensor}
e^{\gamma X(x)} dx^2 ,
\end{equation}
where $X$ is a Gaussian free field (GFF) on (say) a torus $\mathbb T$ and $\gamma \in [0, 2)$ is a parameter. Note the realization of the GFF is a generalized function rather than a function. A fundamental question yet to be understood is on the LQG geometry. The volume of this ``manifold" is well-defined, say set up by the theory of Gaussian multiplicative chaos for log-correlated Gaussian fields \cite{Kahane} (see also \cite{RV14}), and is (in some context) referred to as the LQG measure. In a celebrated work \cite{DS11}, it is proved that the famous KPZ formula \cite{KPZ88} (for more background and references, see \cite{DS11}) holds for the LQG measure.  However, people know little about the metric of this ``manifold". Despite intensive research, it remains an important open problem to even rigorously make sense of \eqref{eq-LQG-tensor} except when $\gamma = \sqrt{8/3}$. For seminal works in this direction, see \cite{MS15, BMS15, MS15b}.

In light of \eqref{eq-LQG-tensor}, it is natural to consider the discrete analogue of the LQG metric on a two-dimensional discrete Gaussian free field $\{\eta_{N, v}\}$ (say, with Dirichlet boundary condition by convention), which is a special instance of log-correlated Gaussian fields with covariance $\E \eta_{N, u} \eta_{N, v}$ given by the Green function of simple random walk killed at exiting the boundary $\partial V_N$. For $u, v\in V_N$, we define the discrete analog of the LQG measure as a random probability measure $\mu_\gamma$ on $V_N$ by
\begin{equation}\label{eq-measure}
\mu_\gamma(v) \propto e^{\gamma \eta_{N, v}}
\end{equation}
(we remark that the LQG measure has also been studied from the spin glass point of view, see \cite{AZ13, AZ14}). Furthermore, by resemblance in the formulation between \eqref{eq-def-LQG} and \eqref{eq-LQG-tensor}, one way to define the discrete analog for the LQG metric is to consider the FPP distance, i.e., replacing the field $\{\varphi_{N, v}\}$ in \eqref{eq-def-LQG} with $\{\eta_{N, v}\}$ (such metric was explicitly mentioned in \cite{Benjamini}). It is interesting to understand the FPP metric, part of it due to its natural and close connection to the continuous LQG metric. A major open problem is to prove that a (suitable) discrete LQG metric under appropriate normalization converges in some sense to the continuous LQG metric. While the FPP metric might not eventually be ``\emph{the suitable}'' one with all properties that are desirable for the LQG metric, we believe it should capture a large part of the fundamental mathematical structure of the suitable notion.

We choose to work with the FPP metric for its simple formulation, as well as its connection  to classical first passage percolation with independent weights (see, e.g., \cite{GK12, ADH15} for recent surveys). Another  motivation of studying the FPP metric is its connection to the heat kernel estimate for Liouville Brownian motion (LBM), whose mathematical construction is provided in \cite{GRV14, B14}. The LBM is closely related to the geometry of LQG; in \cite{FM09, BGRV14} the KPZ formula is derived from the heat kernel of LBM. In \cite{MRVZ14} some nontrivial bounds for the heat kernel of LBM are established. A very interesting direction is to compute the heat kernel of LBM with high precision. It is plausible that understanding the FPP metric as well as its geodesics is of crucial importance in computing the heat kernel of LBM. In fact, in a recent work \cite{DZZ17}, the exponent for the Liouville heat kernel on log-correlated fields constructed in \eqref{eq-def-field} was computed, building on results in the present article.

In this paper, we study an aspect of universality for the FPP metric when the underlying random media is in the class of log-correlated Gaussian fields. Our motivation is two-fold. On the one hand, universality is interesting on its own. There has been an intensive research recently on maxima of log-correlated Gaussian fields, and many universality aspects have been observed and rigorously established. It is shown that the limiting law of the centered maximum is in the same universality class for all log-correlated Gaussian fields under mild assumptions  (see \cite{DRZ15, Madaule} and references therein). Furthermore, it is highly expected that the limiting law of the extremal process should also be in the same universality class (see \cite{BL13, BL14} for the case of discrete Gaussian free field and see \cite{ABK13, ABBS} for the case of branching Brownian motion). From the spin glass point of view, the works \cite{AZ13, AZ14} strongly suggest certain universality behavior of the Gibbs measure (corresponding to the LQG measure in our context) among log-correlated Gaussian fields. Furthermore, the work of \cite{RV11} suggests that the KPZ formula for the LQG measure is universal among log-correlated Gaussian fields (in the continuous set-up). Since the exponent of the FPP metric between a typical pair is largely determined by the geometry of the random set whose Gaussian values are in the same order of the maximum, it \emph{seems} plausible that this may also have a universal exponent among the family of log-correlated Gaussian fields. On the other hand, the universality, either holding or not, is a useful guide for  computing the exponent of the FPP metric for the GFF since it will suggest what are the ``correct'' properties to focus on in order for such computations.

However, despite the fact that universality is of much interest, we are not aware of a precise formulation in this particular case. In what follows, we propose a natural candidate, for which we refer to as the \emph{strong universality} of the FPP metric.

Following \cite{DRZ15}, we say a suitably normalized version of Gaussian field $\{\varphi_{N, v}: v\in V_N\}$ is log-correlated if it satisfies the following two properties (A.0) and (A.1). Write $w = (w_1, w_2)$ for all $w \in \mathbb{Z}^2$, and denote  $|u-v| = \max \{ |u_1-v_1|, |u_2 - v_2| \}$ the $\ell_\infty$-distance.
\begin{itemize}
	\item[(A.0)] \textbf{(Logarithmically  bounded fields)} \
There exists a constant $\alpha_0 >0$ such that for all $u, v\in V_N$, \\
 $$\var (\varphi_{N, v} ) \leq \log_2 N + \alpha_0$$ and
$$\E (\varphi_{N, v} - \varphi_{N, u})^2  \leq 2 \log_2 ( |u-v| \vee 1) - | \var (\varphi_{N, v}) - \var (\varphi_{N, u} ) | + 4\alpha_0 . $$
\end{itemize}

\begin{itemize}
	\item[(A.1)] \textbf{(Logarithmically correlated fields)} \
For any $\delta>0$ there
exists a constant $\alpha^{(\delta)}>0$ such that for all $u, v\in V_N^\delta$,
		$|\Cov( \varphi_{N, v},\varphi_{N, u}) - (\log_2 N - \log_2 (|u-v| \vee 1)
		)| \leq \alpha^{(\delta)}$. Here  we denote  $V_N^\d = \{ u \in V_N : d (u, V_N^c) > \d \}$, where $d(u, V_N^c) = \min \{ |u-v| : v \in V_N^c \}$.
\end{itemize}

The seemingly odd assumption on (A.1) (with introduction of $V_N^{\delta}$) is to give enough flexibility to incorporate the influence from the boundary, seen in the GFF as well as the four-dimensional Gaussian membrane model. At this point, it is natural to define the measure $\mu_{\gamma}$ and the metric $d_{\gamma}$ with respect to the field $\{\varphi_{N,v}: v\in V_N\}$, in the same manner as in \eqref{eq-measure} (where we simply replace the field $\eta_{N, v}$ with $\varphi_{N, v}$) and \eqref{eq-def-LQG}.

\begin{defn}[Strong universality]
For each $\gamma >0$, there exists $\beta = \beta(\gamma)$ such that the following holds for any sequence of log-correlated Gaussian fields (and in particular, $\beta$ does not depend on $\alpha_0$ or $\alpha^{(\delta)}$ in (A.0) and (A.1)). For any fixed $\epsilon>0$, with probability tending to 1 as $N\to \infty$,
 $$
\mu_\gamma\times \mu_\gamma(\{(u, v): N^{\beta - \epsilon} \leq d_\gamma(u, v) \leq N^{\beta + \epsilon}) > 1 - \e_N,
 $$
for a sequence of numbers  $\e_N \to 0$ (as $N\to \infty$).

\end{defn}

In this paper, we construct a family of log-correlated Gaussian fields. The main result (Theorem~\ref{maintheorem}) is that for $0< \gamma <1/2$, we can make $\beta(\gamma)$ arbitrarily close to 1 if we tune a certain parameter in our construction. Previously, precise formulae on exponents for highly related random metrics associated with the GFF were predicted \cite{Watabiki93, ANRBW98, AB14}, which suggested that $\beta(\gamma) < 1$ for the case of the GFF. In addition, in a recent work \cite{DG15} it was proved that $\beta(\gamma)<1$ for small and fixed $\gamma>0$ when the underlying field is a two-dimensional branching random walk (which, approximately, is a log-correlated Gaussian field); furthermore, during the submission of the present article, it was established in \cite{DG16} that $\beta(\gamma)<1$ for the case of the GFF with small and fixed $\gamma$. Altogether, this implies that the strong universality does \emph{not} hold.

\smallskip

\noindent {\bf $K$-coarse modified branching random walk.} Our construction is based on the modified branching random walk (MBRW) introduced by \cite{BZ10}. We first briefly review the definition of the MBRW in $\mathbb Z^2$. Suppose $N=2^n$ for some $n \in \mathbb{N}$. For $j=0,1,\ldots, n$ we define $\mathcal{B}_j$ to be the set of boxes of side length $2^j$ with corners in $\mathbb{Z}^d$. For $v \in V_N$, we define $\mathcal{B}_j (v)$ to be those elements of $\mathcal{B}_j$ which contain $v$.
Denote by  $\{ b_{j,B} : j \ge 0, B \in \mathcal{B}_j \}$ a family of independent centered Gaussian variables such that $ \var ( b_{j,  B}) = 2^{-2j}$ for all $B\in \mathcal B_j$. Then the MBRW $\{ \mathcal{S}_{N, z} \}_{z \in V_N}$ is defined by
 $$
\mathcal{S}_{N, z} = \sum_{j=0}^n \sum_{B \in \mathcal{B}_j (z)} b_{j,B} \,.
 $$
Note that our definition of the MBRW is slightly different from that in \cite{BZ10} since we do not view the box as a torus, but this difference is only for our technical convenience and is immaterial.
For an integer $K = 2^k$, we define the $K$-coarse MBRW by
 \begin{equation} \label{eq-def-field}
\varphi_{N, z} = \varphi_{N, z}(K) =  \sum_{j=0}^{\lfloor n/k \rfloor - 1} \sum_{B \in \mathcal{B}_{jk} (z)} \sqrt{k} b_{jk,B} \ ,
 \end{equation}
where $\lfloor x \rfloor$ is the largest integer which is less than or equal to $x$ (and analogously, we will use the notation $\lceil x \rceil$ for the smallest integer which is greater than or equal to $x$). Following the proof of \cite[Lemma 2.2]{BZ10}, we will show that for $u,v \in V_N$,
 $$
|\Cov(\varphi_{N, u}, \varphi_{N, v})-(n - \log_2 ( |u-v| \vee 1))| \le 6 k + 1 \,,
 $$
(see Lemma \ref{covariance} below). Therefore, a sequence of $K$-coarse MBRW (for fixed $K$ with $N\to \infty$) is a sequence of log-correlated Gaussian fields (where all the $\alpha_0$ and $\alpha^{(\delta)}$ are bounded by $6k+1$).

 \begin{theorem} \label{maintheorem}
Fix $0 < \gamma < \frac 1 2$. For any $\epsilon>0$ there exist $K(\epsilon)$ and $\rho = \rho (\e) > 0$ such that for all $K \ge K (\e)$, the FPP metric for the $K$-coarse MBRW satisfies
 $$
\lim_{N\to \infty} \P \( \mu_\gamma\times \mu_\gamma \( \big\{ (u, v): N^{1 - \epsilon} \leq d_{\gamma}(u, v)  \leq  N^{1+\epsilon} \big\} \) \geq 1 - N^{-\rho} \) = 1 .
 $$
 \end{theorem}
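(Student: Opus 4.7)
The plan is to establish the two inequalities
\begin{equation*}
d_\gamma(u,v) \le N^{1+\epsilon} \quad\text{and}\quad d_\gamma(u,v) \ge N^{1-\epsilon}
\end{equation*}
separately, each holding on an event of $\mu_\gamma \otimes \mu_\gamma$-mass at least $1 - N^{-\rho}$. The assumption that $K = 2^k$ is large is central to both directions: it makes the active scales $0, k, 2k, \ldots, n-k$ of the $K$-coarse MBRW geometrically separated, so each scale contributes independently and is block-constant at its own length scale.

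\textbf{Upper bound.} I would construct a near-geodesic by multi-scale avoidance. For each scale $jk$, $0 \le j \le n/k - 1$, call $B \in \mathcal B_{jk}$ \emph{bad} if $\sqrt k \, b_{jk,B} > \tau_j$, with threshold $\tau_j = c \cdot 2^{-jk}\sqrt{k(\log n + 1)}$ tuned so that $\P(\text{bad}) \le n^{-2}$. A vertex $w \in V_N$ is \emph{very good} if no containing box $B \in \mathcal B_{jk}(w)$ is bad at any scale; union bound over the $n/k$ scales gives density at least $1 - O(1/n)$ of very good vertices, well inside the supercritical site-percolation regime on $\mathbb Z^2$. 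Antal--Pisztora-type chemical distance estimates then yield a very-good path of length $O(N)$ connecting neighborhoods of $u$ and $v$. Along this path $\varphi_{N,w} \le \sum_j \tau_j = O(\sqrt{k\log n})$, so every vertex weight is $N^{o(1)}$ and the total path cost is $N^{1+o(1)}$. Short detours from $u, v$ themselves to the very-good component add at most $e^{\gamma \varphi_{N,u}} + e^{\gamma \varphi_{N,v}} = N^{O(\gamma^2)}$, which for $\gamma < 1/2$ is $o(N^{1+\epsilon})$.

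\textbf{Lower bound.} For $\mu_\gamma$-typical $(u,v)$, Gaussian concentration gives $\varphi_{N,u}, \varphi_{N,v} \approx \gamma n$ and $|u-v| \ge cN$ with probability at least $1 - N^{-\rho'}$. Conditioning on $\varphi_{N,u}$ and using the log-correlation property (A.1) elevates $\varphi_{N,w}$ by an expected $\gamma(n - \log_2 |u-w|)$. I would decompose any path from $u$ to $v$ into dyadic annular crossings $A_\ell = B_{2^\ell}(u) \setminus B_{2^{\ell-1}}(u)$, $\ell = 1, \ldots, n$. Within $A_\ell$, the coarse scales $jk \ge \ell$ contribute a near-constant elevation, while the fine scales $jk < \ell$ form a residual field that is itself a $K$-coarse MBRW on a box of side $2^\ell$. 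An induction on $\ell$ then yields
\begin{equation*}
\text{FPP distance across } A_\ell \ \ge \ 2^{\ell(1-o(1))} \cdot e^{\gamma^2 (n - \ell) c_0},
\end{equation*}
with $c_0 = \log 2$; here the $2^{\ell(1-o(1))}$ factor arises from adding one top scale at a time, each multiplying the FPP distance by essentially $2^k$ (with a $e^{O(\gamma\sqrt k)}$ fluctuation) thanks to log-normal concentration on the $2^{2k}$ boxes at that scale. Summing over annuli,
\begin{equation*}
d_\gamma(u,v) \ge \sum_{\ell=1}^{n} 2^{\ell(1-o(1))} (N/2^\ell)^{\gamma^2} = N^{1-o(1)}.
\end{equation*}

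\textbf{Main obstacle.} The principal difficulty is the inductive FPP lower bound for a $K$-coarse MBRW of side $L=2^\ell$ holding with high probability (not just in expectation): one must control the minimum over an exponentially large family of highly correlated paths. The large-$K$ hypothesis is essential here because it places each individual scale's contribution into a log-normal concentration regime, so Borell--TIS-type tail estimates propagate through the induction without accumulating logarithmic losses. In contrast, once the very-good set is defined, the upper bound reduces to a standard supercritical site-percolation argument, which is more routine.
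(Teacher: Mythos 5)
Your high-level plan (two separate bounds, large $K$ making scales decouple, the observation that $\gamma < 1/2$ keeps the endpoint contribution subdominant) matches the paper's strategy, and you correctly identify where the technical weight lies. But both of your proposed mechanisms have gaps that would not close as written.

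\textbf{Upper bound.} The "very good" vertices do not form a percolation process to which Antal--Pisztora-type chemical-distance bounds apply. The events $\{\psi_{j,w} \le \tau_j\}$ are essentially constant over boxes of side $2^{jk}$, so at scale $jk$ the bad set is a union of macroscopic blocks; the resulting coloring has long-range, scale-hierarchical correlations with no stochastic domination by a supercritical Bernoulli field. (There is also a smaller issue: a vertex lies in $\sim 4^{jk}$ boxes of $\mathcal{B}_{jk}$, so the union bound must be run on $\psi_{j,w}$, not on each $b_{jk,B}$, to get density $1 - O(m/n^2)$.) The paper instead constructs crossings via a Russo--Seymour--Welsh scheme (Lemma \ref{onetimesfour}): one applies Theorem \ref{cardinality} to cut sets, passes to disjoint crossings by min-cut--max-flow, and takes the shortest crossing, then glues crossings across dyadic scales (Proposition \ref{goodpath}). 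Separately, "short detours from $u,v$ costing $e^{\gamma\varphi_{N,u}} + e^{\gamma\varphi_{N,v}}$" underestimates the conditioning effect: conditioning on $\varphi_{N,u} \approx \gamma n$ elevates the field in a whole neighborhood of $u$ by an amount decaying only logarithmically in $|u-w|$. The paper controls this with the decomposition of the good path into $P^N, P^F, Q^\ell$ and the covariance bounds in Proposition \ref{goodpath}(ii), feeding into Proposition \ref{givenendvalues}; your sketch omits this.

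\textbf{Lower bound.} Your scale-by-scale induction asserts that adding a top scale multiplies the FPP distance by $\approx 2^k$, and that this propagates through Borell--TIS. But the minimum is over paths, and a path can visit the blocks of the current top scale arbitrarily non-uniformly---hugging low-$\psi$ blocks while only grazing high ones---so the "each scale multiplies by $2^k$" step is not justified by concentration alone. This is precisely the ``crucial gap in the heuristics'' that the paper flags in Section \ref{pfofcard} and resolves with the $k$-block-nest construction: it extracts from an arbitrary path a subset $Q$ that traverses boxes uniformly at every scale, so that path-averages really are dominated by averages of independent box maxima, and then beats the $c^{K_1^{j_0-j}}$ entropy of possible block families with the $e^{-c' k K_1^{j_0-j}}$ per-family tail. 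Incidentally, the conditioning/annular-elevation machinery you introduce for the lower bound is unnecessary: the paper obtains the $N^{1-\epsilon}$ lower bound directly from Theorem \ref{cardinality} (any long path contains $N^{1-\delta}$ vertices with $\varphi_{N,w} \ge -4\delta \log N$, unconditionally) combined with Proposition \ref{shortdist}, and only needs the conditioned field for the upper bound.
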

 \begin{remark}
The assumption $\gamma<1/2$ is in some sense necessary in order to control the influence to the field in the local neighborhood of $v$ and $u$ when conditioning on the values of $\varphi_{N, v}$ and $\varphi_{N, u}$. The larger the $\gamma$ is, the larger the typical values of $\varphi_{N, v}$ and $\varphi_{N, u}$ are when $v$ and $u$ are sampled according to the measure $\mu_\gamma$. It is clear that when $\gamma$ exceeds a certain threshold, the statement in the preceding theorem no longer holds. However, we do not attempt to achieve the sharp threshold in this work.
\end{remark}
\begin{remark}
It remains to see what the non-universality result in our work would imply in the continuous set-up. An interesting future direction is to investigate the behavior of the geodesics for the FPP metric associated with the $K$-coarse MBRW, and to decide whether their dimensions are strictly larger than 1 (which is predicted to be the case for the GFF and confirmed by \cite{DG16,DZ16} when $\gamma$ is small). From the current work, we know that there exists a path of cardinality at most $N^{1+ \epsilon}$ which has sum of weights less than $N^{1+\epsilon}$ (see Theorem~\ref{thmgoodpath}), where $\epsilon$ vanishes as $K\to \infty$.
\end{remark}

We next describe the main ingredient in the proof of Theorem \ref{maintheorem}.
For applications in \cite{DZZ17} on computing the heat kernel for the LBM on the continuous-analog of our $K$-coarse MBRW,  we introduce a Bernoulli process  $\{ \xi_{N, w}, w \in V_N \}$ (for the purpose of proving Theorem~\ref{maintheorem}, we simply set $\xi_{N,w} = 1$ for all $w$). Let $q$ be a positive integer, $0 < p \le 1$, and we  assume
\begin{itemize}
\item $\{ \xi_{N, w}, w \in V_N \}$ is independent of $\{ \varphi_{N,w}, w \in V_N \}$\,.

\item  $\{\xi_{N, w}\}$ is $q$-dependent, i.e.,  $\{  \xi_{N,w}, w \in A \}$ is independent of $\{  \xi_{N,w}, w \in B \}$ provided $d (A, B) > q$. Here $d (A, B) = \min \{ |z-w| : z \in A, w \in B \}$.

\item  $\P (\xi_{N,w} = 1 ) \ge p$ for all $w \in V_N$.
\end{itemize}
 Using the language of percolation, we say a site $w$ is open if $\xi_{N,w} = 1$, and a path is open if all sites on it are open. For convenience, we define the  {\em diameter} of a path to be the maximal $\ell_\infty$-distance of two vertices on the path.
 \begin{theorem} \label{cardinality}
Let $\delta \in (0,1)$, $\k  \in (0, \frac 1 2 \d^2)$. Then, there exist $K_0 = K_0 (\d, q)>0$ and $p_0 = p_0 (\d, q) \in (0,1)$ such that the following holds. Suppose $K \ge K_0$, and $p \ge p_0$. Then, for $N$ large enough, with probability at least $1 - 6 e^{- n/10}$,
 $$
| \{ w \in P: \varphi_{N, w}(K) \leq 4 \delta \log N, \ \xi_{N,w} = 1 \} | \ge N^{1-\d}
 $$
simultaneously for all paths $P$ with diameter at least $N^{1-\k }$.
 \end{theorem}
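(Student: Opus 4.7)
The plan combines a Bernoulli-percolation reduction, a multi-scale decomposition of the Gaussian field, and a crossing/counting argument on paths.

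\emph{Step 1 (Reducing the Bernoulli field).} Since $\{\xi_{N,w}\}$ is $q$-dependent with $\mathbb{P}(\xi_{N,w}=1)\ge p_0$, the Liggett-Schonmann-Stacey theorem stochastically dominates $\{\xi=0\}$ by an arbitrarily sub-critical independent Bernoulli field, provided $p_0$ is close enough to $1$ (depending on $q$). A standard Peierls/contour argument then gives that, with probability at least $1-2e^{-n/10}$, every $\ast$-connected component of $\{w:\xi_{N,w}=0\}$ has $\ell_\infty$-diameter at most $n^2$. From here on I focus on proving the analogous statement with ``good'' replaced by $\{\varphi_{N,w}\le 4\delta\log N\}$; the $\xi$ condition contributes only a negligible correction.

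\emph{Step 2 (Scale decomposition).} Choose $m$ so that $M:=2^{mk}\approx N^{(\delta-\kappa)/2}$, and decompose $\varphi_{N,z}=\varphi^>_z+\varphi^<_z$ with $\varphi^>_z:=\sum_{j\ge m}Y_{j,z}$ where $Y_{j,z}:=\sum_{B\in\mathcal{B}_{jk}(z)}\sqrt{k}\,b_{jk,B}$. By Lemma~\ref{covariance} and the independence of the $b_{jk,B}$'s, $\mathrm{Var}(\varphi^>_z)\approx (1-(\delta-\kappa)/2)n$ and $\mathrm{Var}(\varphi^<_z)\approx(\delta-\kappa)n/2$, with $O(k)$ corrections absorbed into constants once $K\ge K_0$ is large. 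Partition $V_N$ into $M$-boxes $\{B_\alpha\}$. Call $B_\alpha$ \emph{nice} if $\max_{w\in B_\alpha}\varphi^>_w\le 3\delta\log N$. By Borell--TIS concentration applied to the Gaussian process $\varphi^>|_{B_\alpha}$ (whose within-box oscillation is $O(\sqrt{k\log M})\ll\delta\log N$),
\[
\mathbb{P}(B_\alpha\text{ not nice})\le \exp\!\big(-c_1\delta^2 n/(1-(\delta-\kappa)/2)\big)\le N^{-c_2\delta^2}.
\]
Moreover, inside a nice box, any vertex $w$ with $\varphi_{N,w}>4\delta\log N$ must have $\varphi^<_w>\delta\log N$, which has probability at most $N^{-c_3\delta^2/(\delta-\kappa)}$ by Gaussian tail.

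\emph{Step 3 (Path argument).} Any path $P$ of $\ell_\infty$-diameter $\ge N^{1-\kappa}$ must visit at least $r:=N^{1-\kappa}/M = N^{1-(\kappa+\delta)/2}$ distinct $M$-boxes. The key claim is that with probability at least $1-3e^{-n/10}$, every such $P$ visits at least $r/2$ nice $M$-boxes. This uses the fact that, conditionally on the scales above $m$, the coarse fields $\varphi^>|_{B_\alpha}$ at well-separated $M$-boxes are approximately independent, combined with a coarse-graining of $P$ at scale $M$: the coarsened trajectory of $P$ through the $M$-box lattice has combinatorial complexity at most $N^2\cdot 4^r$, which is dominated by the Gaussian tail penalty $N^{-c_2\delta^2 r}$ for a $(1/2)$-fraction of non-nice boxes exactly when $\kappa<\tfrac12\delta^2$. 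Inside each nice $M$-box visited by $P$, the bad fraction is $\le N^{-c_3\delta^2/(\delta-\kappa)}$, so $P$ picks up at least one good vertex in essentially every nice box it enters. Summing, $P$ contains at least $r/2 \ge N^{1-\delta}$ good vertices, using $(\kappa+\delta)/2<\delta$.

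\textbf{Main obstacle.} The central technical difficulty is the probabilistic bound in Step 3, uniform over all paths. A naive union bound fails because the number of paths of length $\ge N^{1-\kappa}$ is exponentially large in $N^{1-\kappa}$. The resolution is to coarse-grain: replace each path by its trajectory through $M$-boxes (which has complexity only $4^r$), and use that the niceness events for well-separated $M$-boxes decouple once one conditions on the still-coarser scales $\ge M$. The hypothesis $\kappa<\tfrac12\delta^2$ is precisely what balances the Gaussian tail exponent from Step 2 against the combinatorial entropy of coarsened paths, and the condition $K\ge K_0$ ensures the covariance remainder $6k+1$ from Lemma~\ref{covariance} is negligible on the scale $\delta\log N$.
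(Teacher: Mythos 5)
There is a genuine gap in Step 3, and it is precisely the issue the paper goes to great lengths to avoid.

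Your Step 3 rests on the claim that ``conditionally on the scales above $m$, the coarse fields $\varphi^>|_{B_\alpha}$ at well-separated $M$-boxes are approximately independent.'' This is incorrect: $\varphi^>_z = \sum_{j\ge m}\psi_{j,z}$ is exactly the part of the field that carries the long-range correlations. For two $M$-boxes $B_\alpha, B_\beta$ at distance $d$ with $M \le d \le N$, we have $\Cov(\varphi^>_z,\varphi^>_w) \approx \log_2 N - \log_2 d$, which is comparable to $\Var(\varphi^>_z)$ when $d$ is polynomially smaller than $N$; the niceness events are therefore strongly positively correlated, not decoupled. Moreover ``conditioning on the scales above $m$'' is circular, since those scales are $\varphi^>$ itself — after conditioning, niceness is deterministic. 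One could try to fix this by introducing a third, coarser cutoff $m'>m$, conditioning on scales above $m'$, and treating the intermediate scales $[m,m')$ as (approximately) independent across well-separated $M$-boxes; but then the conditioned coarsest part shifts the mean of $\varphi^>|_{B_\alpha}$ by a random quantity of order up to the conditioning value, and the bookkeeping becomes a genuine multi-scale argument rather than the single cutoff you propose. Your own accounting also signals a problem: as written, Step 3 would go through for any $\kappa<\delta$ (the entropy $4^r$ is beaten by the per-box penalty $N^{-c\delta^2}$ for all large $n$, and you only need $(\kappa+\delta)/2<\delta$ at the end), so the argument never actually produces the hypothesis $\kappa<\tfrac12\delta^2$; in the paper this condition arises from the geometry, namely the requirement $N^{1-\kappa} > 4\cdot 2^{j_0 k}$ with $j_0 = \lfloor (1-\tfrac12\delta^2)m\rfloor$, so that the path can be fed into the top level of the block-nest.

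There is a second, related gap: your Step 3 tacitly assumes that a path which visits $r$ nice $M$-boxes picks up at least one good vertex per nice box, but a path can traverse a nice box while barely clipping its corner, or conversely spend a $2^{2\ell}$ fraction of its length in one unlucky box. This non-uniform-visiting issue is exactly what the paper calls ``a crucial gap in this heuristics'' at the start of Section 2. The paper's resolution is the $k$-block-nest construction (Subsection 2.2): one extracts a subset $Q\subset P$ that traverses a prescribed number of well-separated blocks at every scale simultaneously, so that the path-average at each scale is genuinely dominated by an unweighted average of box maxima, and the box maxima at any fixed scale are i.i.d. by separation. The entropy of the extracted nested block families is then controlled by the coherence property, and a union bound over scales $j=0,\ldots,\lceil(1-\delta^2)m\rceil-1$ (rather than over a single cutoff $m$) closes the argument (Lemmas~\ref{Lemma.Xgood}--\ref{Lemma.Ygood}). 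Your single-scale decomposition does not have an analogue of either ingredient, so both the decoupling and the uniform-visiting steps are missing.

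A smaller point on Step 1: the paper does not route the Bernoulli field through LSS/Peierls. Instead, the block-nest is arranged so that the extracted points of $Q$ are pairwise at distance $>q$, which makes $\{\xi_{N,w}:w\in Q\}$ genuinely independent; Lemma~\ref{Lemma.Ygood} then applies a straightforward Chernoff bound with the same entropy count. Saying ``the $\xi$ condition contributes only a negligible correction'' after the contour estimate is not enough — you would still need a quantitative statement that the good Gaussian vertices found on $P$ are not swallowed by bad $\xi$-clusters, and the paper's route avoids this by design.
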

 \begin{remark}
The geometric property established in Theorem~\ref{cardinality} is formulated without referring to any notion of the LQG metric.  We \emph{believe} this is the key property making the $K$-coarse MBRW differ from the GFF and leading to the non-universality of the FPP metric. Furthermore, we \emph{feel} that due to such difference the non-universality \emph{should} hold for any reasonable notion of the discrete LQG metric.
  \end{remark}

By duality, Theorem~\ref{cardinality} implies that for a rectangle with height at least $N^{1-\k}$, there exist $N^{1-\d}$ horizontal cut sets with $w$ being open and $\varphi_{N, w}(K) $ being small. This implies the existence of a ``short" open horizontal path with small Gaussian values on it. The following result can be deduced from Theorem~\ref{cardinality}, combined with a Russo-Seymour-Welsh kind of constructions \cite{R,SW}.
 \begin{theorem} \label{thmgoodpath}
Let $\d \in (0, \frac 1 2)$,  $\k  \in (0, \frac 1 2 \d^2)$, $\b \in (0, \frac 1 4 \d^4 \log^2 2 )$. Then, there exist $K_1 = K_1 (\d, q)>0$ and $p_1 = p_1 (\d, q) \in (0,1)$ such that the following holds. Suppose $K \ge K_1$, and $p \ge p_1$. For $N$ large enough, if $|u-v| > N^{1-\k}$, then with probability at least $1 - 128 (1-p)^{\frac 1 {(2q+1)^2}} - e^{-\b n}$, one can find an open path $P$ connecting $u$, $v$ such that $|P| \le N^{1+2 \d}$, and $\varphi_{N,w} (K) \le 7 \d \log N$, $\forall w \in P$.
 \end{theorem}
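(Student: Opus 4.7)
The plan is to combine Theorem~\ref{cardinality} with planar duality and a Russo--Seymour--Welsh (RSW) gluing argument. Call a vertex $w \in V_N$ \emph{good} if $\xi_{N,w}=1$ and $\varphi_{N,w}(K)\le 4\delta \log N$, so that the vertices produced by Theorem~\ref{cardinality} are good in this sense. The target is an open path from $u$ to $v$ whose vertices all satisfy the weaker threshold $\varphi_{N,w}(K) \le 7\delta\log N$; the $3\delta\log N$ slack will be spent during the RSW gluing.

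First I would upgrade Theorem~\ref{cardinality} to a \emph{crossing lemma}: with probability at least $1-O(e^{-n/10})$, every axis-aligned rectangle $R\subseteq V_N$ whose shorter side has length at least $\tfrac12 N^{1-\kappa}$ contains at least $N^{1-\delta}$ vertex-disjoint good horizontal crossings, and symmetrically for vertical crossings. If $R$ had no good horizontal crossing, then by planar duality one could find a $*$-connected path of non-good vertices from top to bottom of $R$; this path has diameter $\ge N^{1-\kappa}$, so Theorem~\ref{cardinality} forces it to contain $N^{1-\delta}$ good vertices, a contradiction. Iterating (peeling off one crossing at a time) yields $N^{1-\delta}$ disjoint ones, and a polynomial union bound handles all rectangles at the relevant dyadic scales.

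Next I would apply the classical RSW square-root trick \cite{R,SW} to convert crossings of rectangles of aspect ratio $2:1$ into crossings anchored at prescribed boundary windows. Because $\xi$ is only $q$-dependent rather than independent, the square-root trick has to be run on super-blocks of side $2q+1$ (on which $\xi$ is truly independent); this is the origin of the exponent $1/(2q+1)^2$ and the combinatorial constant $128$ in the error probability. I would then tile a thin strip from $u$ to $v$ with $O(N^\kappa)$ overlapping rectangles of side $\asymp N^{1-\kappa}$. In each tile the $N^{1-\delta}$ disjoint good crossings have total edge count at most the tile area $\asymp N^{2-2\kappa}$, so pigeonhole produces one of length at most $N^{1-2\kappa+\delta}$. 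RSW supplies compatible stitching points in adjacent overlaps, and concatenation yields an open path from $u$ to $v$ of total length $O(N^\kappa\cdot N^{1-2\kappa+\delta})=O(N^{1-\kappa+\delta})$, comfortably below $N^{1+2\delta}$.

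The main obstacle will be the RSW step: the classical arguments rely on independence together with Harris/FKG inequalities, and one must run the square-root trick for the $q$-dependent field $\xi$ while simultaneously preserving the Gaussian threshold on every vertex of the concatenated path. This is precisely where the $3\delta\log N$ slack is consumed and where the factor $(1-p)^{1/(2q+1)^2}$ arises. The remaining $e^{-\beta n}$ error with $\beta<\tfrac14 \delta^4(\log 2)^2$ comes from union-bounding the $6e^{-n/10}$ bound of Theorem~\ref{cardinality} over the polynomially many tiles and dyadic rectangles, together with a small-probability estimate that a given tile fails to contain the required $N^{1-\delta}$ short disjoint good crossings.
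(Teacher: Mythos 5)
The overall outline---duality to convert Theorem~\ref{cardinality} into a crossing lemma, pigeonhole among the $N^{1-\delta}$ disjoint good crossings to find a short one, RSW gluing on super-blocks of side $2q+1$ to handle $q$-dependence---is correct in spirit and matches the paper at the macroscopic scale. But there is a genuine gap in the part of the construction that actually matters for the theorem's statement: you need a path from the \emph{vertex} $u$ to the \emph{vertex} $v$, and tiling a thin strip with $O(N^\kappa)$ rectangles of side $\asymp N^{1-\kappa}$ only produces left-to-right crossings of those rectangles. Such a crossing passes near $u$ and near $v$, but there is no reason it should pass \emph{through} them. Nothing in your proposal connects the single vertices $u$ and $v$ to the macroscopic crossing.

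This is precisely the hard part, and the paper spends most of Section~3.3.2 on it. The paper's solution is a genuinely multi-scale chain: it defines a sequence of boxes $B_{u,\ell}$ with $\ell$ running from $\ell_0 = k\lfloor\delta^2 m\rfloor$ (side $\approx N^{\delta^2}$) up to $\ell_2$ (side $\asymp |u-v|$), positioned so that $B_{u,\ell+1}$ and $\acute{B}_{u,\ell}$ share a corner, and applies the crossing lemma (Lemma~\ref{onetimesfour}) at every dyadic scale $\ell_0,\ldots,\ell_2$, forcing consecutive crossings to intersect. At the innermost scale it cannot appeal to the crossing lemma at all; instead it verifies by a crude union bound that \emph{every} vertex of the tiny box $B_{u,\ell_0}$ has $\varphi_{N,z}\le 7\delta\log N$ (event $E_1$), and handles openness by a direct contour/Peierls argument that produces the $64(1-p)^{1/(2q+1)^2}$ term (event $E_2$); the $128$ is just $2\times 64$ from doing this at both $u$ and $v$. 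The constraint $\beta<\tfrac14\delta^4\log^2 2$ is a fingerprint of this multi-scale chain: the weakest bound comes from the smallest box of side $\asymp N^{\delta^2}$, giving error $\asymp e^{-\beta_1 \ell_0}\asymp e^{-(\tfrac12\log^2 2)\delta^2\cdot\delta^2 n}$. Your single-scale tiling cannot produce the $\delta^4$ and, more importantly, never reaches the vertices $u$ and $v$. A secondary issue: your claimed path length $O(N^{1-\kappa+\delta})$ is strictly better than the theorem's $N^{1+2\delta}$; the true bound $N^{1+2\delta}$ comes from summing $(2^\ell)^{1+\delta}$ over all dyadic scales up to $\ell_2$, which is dominated by the largest scale and gives $O(N^{1+\delta})$, another sign that the dyadic ladder is missing from your argument.
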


\noindent {\bf Organization.} Theorem~\ref{cardinality}, the core theorem of the article, is proved in Section \ref{pfofcard}. In Section~\ref{pfofthm}, we show that (see Proposition~\ref{shortdist}) a typical pair of vertices $u,v$ sampled from $\mu_\gamma \times \mu_\gamma$ has $\ell_\infty$-distance at least $N^{1-\k}$. Combined with Theorem~\ref{cardinality}, it yields the desired lower bound in Theorem~\ref{maintheorem}. In Proposition~\ref{goodpath}, we show the existence of a certain \emph{good} path connecting two vertices, which implies Theorem~\ref{thmgoodpath}. This gives the desired upper bound in Theorem~\ref{maintheorem}, modulo the difference between two randomly sampled vertices according to the LQG measure and two fixed vertices (which is addressed in Proposition~\ref{givenendvalues}).

\noindent{\bf Notation convention}. Throughout the paper, $\d$, $\k$, $\zeta$ are small positive parameters, $q$ is a fixed positive integer, $p \in (0,1]$ and $k$ is a large parameter chosen depending on $\delta$ and $q$. Denote $m = \lfloor n/k \rfloor$. We consider the limiting behavior when $n\to \infty$, thus we can assume without loss of generality that the inequalities such as $2 \d n - \log n + 2^k > \d n$ hold.

\noindent {\bf Acknowledgement.} We warmly thank Ofer Zeitouni, Pascal Maillard, Steve Lalley, Marek Biskup, R{\'e}mi Rhodes and Vincent Vargas for many helpful discussions.

\section{Connectivity for level sets} \label{pfofcard}

This section is devoted to the proof of Theorem~\ref{cardinality}. We will first give the heuristic argument in the simpler case where $p=1$.  The proof is by contradiction. Suppose there exists a path $P$ of diameter $\geq N^{1-\k}$ such that all the Gaussian values along the path are $\geq 4\delta \log N$ except for $N^{1-\delta}$ vertices (recall that $\k <\delta^2/2$). We consider the average of all the Gaussian variables along the path $P$, i.e., $\mathrm{Av}_P = \frac{1}{|P|} \sum_{z\in P} \varphi_{N, z}$. Then $\mathrm{Av}_P \ge 3 \d \log N$. For the convenience of notation, we write
\begin{equation}\label{eq-def-psi}
\psi_{j,z}  = \sum_{B \in \B_{jk} (z)} \sqrt{k} b_{jk, B} .
 \end{equation}
Recall $m = \lfloor n / k \rfloor$ and $\varphi_{N,z} = \sum_{j=0}^{m-1} \psi_{j,z}$. Then it follows that
$$\mathrm{Av}_{P, j^*} \ge 3 \d k\mbox{ for some }j^*\,,$$ where $\mathrm{Av}_{P, j^*}$ is the average in the $j^*$-th level, i.e., $\mathrm{Av}_{P, j^*} = \frac{1}{|P|} \sum_{z\in P} \psi_{j^*, z}$. We consider the sequence of boxes $B_1, \ldots, B_H$ (with $H \ge N^{1-\k} 2^{-j^* k}$) of side length $2^{j^* k}$ along $P$, and let
$$T_h = \max_{z\in B_h} \psi_{j^*, z}\,.$$
It is then natural (but incorrect as we will point out later) to expect that the averaged value of $T = \{  T_h \}_{h=1,\ldots,H}$ along the sequence of boxes $\{ B_1, \ldots, B_H \}$ is larger than $3 \d k$, i.e.,
 $$
\mathrm{Av}_{T, \{B_1, \ldots, B_H\}, j^*} : = \frac 1 H (T_1 + \ldots + T_H) \ge 3 \d k\,.
 $$
But this can be ruled out by the following  first moment computation. One can show that  $T_1, \ldots, T_H$ are essentially independent of each other and each $T_h$ has mean $O(\sqrt{k})$ with sub-Gaussian tail of variance $O(k)$. This implies that (when $k$ is sufficiently large) for each such sequence of boxes $B_1, \ldots, B_H$,
 $$\P( \mathrm{Av}_{T, \{B_1, \ldots, B_H\}, j^*} \geq 3\delta k) \leq 2 e^{-4 \delta^2 k H}\,.$$
However, the number of such sequences of boxes is at most $4^H$. Therefore, a simple union bound implies that with overwhelming probability, there exists no sequence of boxes with $\mathrm{Av}_{T, \{B_1, \ldots, B_H\}, j^*} \geq 3\delta k$, arriving at a contradiction.

A crucial gap in this heuristic is that the path $P$ could intersect certain $B_h$ for many times (up to $2^{2 j^* k}$) and intersect another $B_{h^\prime}$ for $O(2^{j^*k})$ times, thus  $\mathrm{Av}_{P,j^*}$ is dominated by a \emph{weighted} average of $T_1, \ldots, T_H$ rather than $\mathrm{Av}_{T, \{B_1, \ldots, B_H\}, j^*}$. In order to address this issue, in Section \ref{S.blocknest} we extract a subset $Q$ of a path $P$, which stretches uniformly in some sense (as explained at the beginning of Section~\ref{S.blocknest}). Thus, the average of $\psi_{j^*, z}$ along $Q$ is indeed dominated by the average $\mathrm{Av}_{T, \{B_1, \ldots, B_H\}, j^*}$, where $B_1, \ldots, B_H$ is a sequence of boxes associated with $Q$.  Afterwards, the proof of Theorem \ref{cardinality} is  carried out in Section \ref{S.pfofcard}, where we will apply this heuristic to $Q$ (rather than $P$).

\subsection{Preliminary lemmas} \label{S.lemmas}

This section records a number of preliminary lemmas.

\begin{lemma} \label{covariance}
Denote $\s_{z,w} = \E \varphi_{N,z} \varphi_{N,w}$. Then, $0 \le (km - \log_2 (|z-w| \vee 1))  - \s_{z,w} \le  5k + 1$.
 \end{lemma}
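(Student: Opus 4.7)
My plan is a direct second-moment computation that exploits the independence of the Gaussian building blocks $\{b_{jk,B}\}$. By that independence,
\begin{equation*}
\sigma_{z,w} = \sum_{j=0}^{m-1} k \cdot |\mathcal{B}_{jk}(z) \cap \mathcal{B}_{jk}(w)| \cdot 2^{-2jk},
\end{equation*}
so everything reduces to counting, at each scale $jk$, the number of axis-aligned boxes with integer corners and side length $L := 2^{jk}$ that contain both $z$ and $w$. Writing $d_i = |z_i - w_i|$ and $d = |z-w| = \max(d_1, d_2)$, an elementary count shows this number is $(L - d_1)(L - d_2)$ when $L > d$ and $0$ otherwise. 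Plugging in gives the closed-form expression $\sigma_{z,w} = \sum_{j :\, 2^{jk} > d} k\,(1 - d_1/L)(1 - d_2/L)$.

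The upper bound $\sigma_{z,w} \le km - \log_2(|z-w| \vee 1)$ (which is the left inequality of the lemma) then follows from the trivial $(1 - d_1/L)(1 - d_2/L) \le 1$ and the fact that the sum runs from $j_0$, the smallest integer with $2^{j_0 k} > d$, so that $k j_0 \ge \log_2(d \vee 1)$. For the lower bound I would use the linear estimate $(1 - d_1/L)(1 - d_2/L) \ge 1 - 2d/L$, giving
\begin{equation*}
\sigma_{z,w} \ge k(m - j_0) - 2 k d \sum_{j \ge j_0} 2^{-jk}.
\end{equation*}
The geometric tail is bounded by $\sum_{j \ge j_0} 2^{-jk} \le 2 \cdot 2^{-j_0 k} \le 2/d$ (using $2^{j_0 k} > d$), so the cross-term correction is at most $4k$. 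Combined with the trivial $k j_0 \le \log_2(d \vee 1) + k$ coming from the definition of $j_0$, this yields $\sigma_{z,w} \ge km - \log_2(|z-w| \vee 1) - 5k$, which is slightly stronger than the asserted $5k + 1$.

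There is no genuine obstacle here: the argument is a routine adaptation of the MBRW covariance estimate in \cite[Lemma 2.2]{BZ10}, with the only new ingredient being the factor $\sqrt{k}$ coming from the coarsening (which produces the overall $k$ inside the sum). The only bookkeeping to watch is the corner regime $d \in \{0, 1\}$, where $\log_2(d \vee 1) = 0$ and the geometric-tail bound $2/d$ must be checked directly (e.g.\ for $d = 0$ one has $\sigma_{z,w} = km$ exactly, while for $d = 1$ the correction $\sum_{j \ge 1} 2k \cdot 2^{-jk}$ is at most $4$); this is what accounts for the extra $+1$ in the stated constant.
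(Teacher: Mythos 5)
Your proposal is correct and follows essentially the same route as the paper: compute $\E \psi_{j,z}\psi_{j,w}$ exactly by counting the common containing boxes at each scale, bound the product $(1-d_1/L)(1-d_2/L)$ above by $1$ and below by $1-2|z-w|/L$, and sum the geometric tail. Your bookkeeping is in fact slightly sharper than the paper's — you bound $kj_0 \le \log_2(|z-w|\vee 1) + k$ via the minimality of $j_0$ (i.e.\ $2^{(j_0-1)k}\le |z-w|$), whereas the paper uses the looser $kj_0 \le \log_2(|z-w|+1)+k \le \log_2|z-w|+k+1$, which is where its extra $+1$ comes from.
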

\begin{proof}
The proof follows that of Lemma 2.2 in \cite{BZ10}. The case for $z = w$ is obvious and in what follows we will focus on the case when $z \neq w$. Note $\E \psi_{j,z}  \psi_{j,w} = 0$ if $|z-w| \ge 2^{jk}$. Suppose $|z-w| < 2^{jk}$, equivalently, $j \ge \lceil \frac 1 k \log_2 (|z-w|+1) \rceil $. Then
 $$
\E \psi_{j,z}  \psi_{j,w}
 =
k  (1 - \frac { |z_1 - w_1| } {2^{jk}}) (1 - \frac { |z_2 - w_2| } {2^{jk}}),
 $$
which implies that
 \begin{equation} \label{covofpsi}
k - 2k \times \frac {|z-w|}{2^{jk}} \le \E \psi_{j,z}  \psi_{j,w}  \le k.
 \end{equation}
Note $\s_{z,w}  = \sum_{j=\lceil \frac 1 k \log_2 (|z-w|+1) \rceil}^{m-1} \E \psi_{j,z}  \psi_{j,w} $. On the one hand,
 $$
\s_{z,w} \le k \( m - \lceil \frac 1 k \log_2 (|z-w|+1) \rceil \) \le km - \log_2 |z-w|  .
 $$
On the other hand,
 \begin{align*}
\s_{z,w}
  & \ge
\sum_{j=\lceil \frac 1 k \log_2 (|z-w|+1) \rceil}^{m-1} k (1 - 2 \times \frac {|z-w|}{2^{jk}})
 \\ & =
k \( m - \lceil \frac 1 k \log_2 (|z-w|+1) \rceil \) - 4k |z-w| \times 2^{- k  \lceil \frac 1 k \log_2 (|z-w|+1) \rceil }
 \\ & \ge
km  - \( \log_2 |z-w| + k + 1 \) - 4k
 \\ & =
km - \log_2 |z-w| - 5 k - 1,
 \end{align*}
 completing the verification of the lemma.
 \end{proof}

 \begin{lemma} \label{concentration}
(\cite[Theorem 7.1, Equation (7.4)]{L01}) Let $\{ G_z : z \in B \}$ be a Gaussian field on a finite index set $B$. Set $\s^2 = \max_{z \in B} \Var (G_z)$. Then
 $$
\P (| \max_{z \in B} G_z - \E \max_{z \in B} G_z | \ge x) \le 2 e^{-\frac {x^2}{2 \s^2}} .
 $$
 \end{lemma}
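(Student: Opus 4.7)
The plan is to deduce this from the general Gaussian concentration-of-measure inequality for Lipschitz functions, which in turn follows either from the Borell–Tsirelson–Ibragimov–Sudakov inequality or from the Gaussian logarithmic Sobolev inequality (Herbst's argument). The key observation is that on a \emph{finite} index set $B$, the maximum is a deterministic Lipschitz functional of an underlying i.i.d.\ standard Gaussian vector, and the Lipschitz constant is governed precisely by $\sigma = \max_{z \in B} \sqrt{\Var(G_z)}$.

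First I would realize the Gaussian field concretely. Since $|B| < \infty$, the covariance matrix $\Sigma = (\E G_z G_w)_{z,w \in B}$ admits a factorization $\Sigma = A A^{\top}$ with $A \in \R^{|B| \times |B|}$ (e.g.\ Cholesky), so writing $G = A g$ for a standard Gaussian vector $g \sim \mathcal{N}(0, I_{|B|})$ gives a representation $G_z = \langle a_z, g \rangle$ where $a_z$ is the $z$-th row of $A$ and $\|a_z\|_2^2 = \Var(G_z)$. Define $F \colon \R^{|B|} \to \R$ by $F(x) = \max_{z \in B} \langle a_z, x \rangle$. For any $x, y \in \R^{|B|}$, picking $z^\star$ achieving the max at $x$,
\[
F(x) - F(y) \le \langle a_{z^\star}, x - y \rangle \le \|a_{z^\star}\|_2 \, \|x - y\|_2 \le \sigma \, \|x - y\|_2,
\]
and by symmetry $F$ is $\sigma$-Lipschitz. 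Thus $\max_{z \in B} G_z = F(g)$ is a $\sigma$-Lipschitz function of a standard Gaussian vector.

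Next I would apply Gaussian concentration for Lipschitz functions: if $F \colon \R^d \to \R$ is $L$-Lipschitz and $g \sim \mathcal{N}(0, I_d)$, then for all $x > 0$,
\[
\P\bigl(|F(g) - \E F(g)| \ge x\bigr) \le 2 \exp\!\bigl(-x^2/(2 L^2)\bigr).
\]
Specializing to $L = \sigma$ yields the claimed bound. To be self-contained I would sketch the standard derivation via Herbst's argument: by the Gaussian log-Sobolev inequality, for any smooth $h$, $\mathrm{Ent}(h^2) \le 2 \E \|\nabla h\|_2^2$; applying this to $h = e^{\lambda F / 2}$ and using $\|\nabla F\|_2 \le L$ a.e.\ (valid for Lipschitz $F$ by Rademacher), one obtains the differential inequality for $\Lambda(\lambda) = \log \E e^{\lambda (F(g) - \E F(g))}$ that integrates to $\Lambda(\lambda) \le \lambda^2 L^2 / 2$, after which Markov's inequality followed by optimization in $\lambda$ gives the one-sided bound; the two-sided bound comes from applying the argument to $-F$ as well.

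I do not expect a real obstacle here since this is a textbook Gaussian concentration statement; the only non-routine points are (i) justifying the linear representation and the Lipschitz bound, which is where the hypothesis $\sigma^2 = \max_z \Var(G_z)$ enters through $\|a_z\|_2^2 = \Var(G_z)$, and (ii) invoking the Gaussian log-Sobolev or isoperimetric inequality as a black box. Everything else is direct calculation, and the finiteness of $B$ means no extra measurability or separability argument is needed.
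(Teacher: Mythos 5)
Your proof is correct, and it is in fact the standard derivation underlying the cited result: the paper itself offers no proof and simply points to Ledoux's Theorem 7.1, which is exactly the Gaussian concentration inequality for Lipschitz functionals that you reconstruct. Your Lipschitz estimate is sound (for $z^\star$ achieving the max at $x$ one has $F(x)-F(y)\le\langle a_{z^\star},x\rangle-\langle a_{z^\star},y\rangle\le\|a_{z^\star}\|_2\|x-y\|_2\le\sigma\|x-y\|_2$, and symmetry gives the other direction), and the Herbst/log-Sobolev route to $\Lambda(\lambda)\le\lambda^2\sigma^2/2$ followed by a Chernoff bound is the textbook argument. One cosmetic remark: your representation $G=Ag$ implicitly assumes the field is centered, which suffices for all the applications in this paper (the $\psi_{j,z}$ are centered), but the lemma as stated does not require centering; the same Lipschitz bound holds for $F(x)=\max_z(\langle a_z,x\rangle+\mu_z)$, so nothing is lost by noting this. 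Since the paper treats the lemma as a black-box citation, you have supplied strictly more detail than the authors did, by the same route they are implicitly relying on.
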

 \begin{lemma}\label{d2mtom}
We have that
 $$
\P \( \mbox{$\sum_{j=\lceil (1 - \d^2) m\rceil}^{m-1}$}   \psi_{j,z} \le \frac 5 2 \d \log N, \ \ \mbox{ for all } \ z \in V_N \) \ge 1 - 2 e^{-  n/10 } .
 $$
 In addition, we have
 \begin{equation}\label{eq-continuity}
 \P(|\varphi_{N, u} - \varphi_{N, v}| \leq 100 \sqrt{K\log N}\,, \mbox{ for all } u, v\in V_N, u\sim v) \geq 1 - e^{-n/10 } .
 \end{equation}
 \end{lemma}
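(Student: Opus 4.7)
The plan is to prove each of the two displayed inequalities independently by a direct Gaussian tail estimate combined with a union bound; the variances in question are obtained from the independence structure of the $\{b_{jk,B}\}$ (for the first) and from Lemma~\ref{covariance} (for the second).

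For the first inequality, I set $G_z := \sum_{j=\lceil(1-\delta^2)m\rceil}^{m-1}\psi_{j,z}$ for $z\in V_N$. Since the families $\{b_{jk,B}\}_B$ are independent across different $j$, the summands $\psi_{j,z}$ are mutually independent, and since $z$ lies in exactly $|\mathcal{B}_{jk}(z)|=2^{2jk}$ boxes each of variance $k\cdot 2^{-2jk}$, one has $\var(\psi_{j,z})=k$ for every $z,j$. Consequently
$$
\sigma^2 := \var(G_z) = \bigl(m-\lceil(1-\delta^2)m\rceil\bigr)\,k \leq \delta^2 n
$$
uniformly in $z$. With $t := \tfrac{5}{2}\delta\log N$, the standard Gaussian tail $\P(G_z>t)\leq\exp(-t^2/(2\sigma^2))$ and a union bound over the $|V_N|\leq N^2$ vertices give
$$
\P\bigl(\max_{z\in V_N}G_z > t\bigr) \leq N^2\exp\!\left(-\tfrac{t^2}{2\sigma^2}\right) \leq \exp\!\bigl(n\bigl[2\ln 2 - \tfrac{25}{8}(\ln 2)^2\bigr]\bigr),
$$
and a numerical check gives $2\ln 2 - \tfrac{25}{8}(\ln 2)^2 \approx 1.386-1.501 \approx -0.115 < -\tfrac{1}{10}$, so the probability is at most $e^{-n/10}\leq 2 e^{-n/10}$. (If the paper's $\log$ is $\log_2$ the bound is stronger, namely $\leq e^{-1.74\,n}$.)

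For the second inequality, for any edge $u\sim v$ one has $|u-v|=1$, so $\sigma_{u,u}=\sigma_{v,v}=km$ and Lemma~\ref{covariance} gives $\sigma_{u,v}\geq km-(5k+1)$. Therefore
$$
\var(\varphi_{N,u}-\varphi_{N,v})=2(\sigma_{u,u}-\sigma_{u,v})\leq 10k+2\leq 11k.
$$
The Gaussian tail at threshold $100\sqrt{K\log N}$ gives $\P(|\varphi_{N,u}-\varphi_{N,v}|>100\sqrt{K\log N})\leq 2\exp(-10^4 K\log N/(22k))$; since $K/k=2^k/k\geq 1$ for $k\geq 1$, this exponent is at least a huge constant times $n$, and a union bound over the $\leq 2N^2$ edges of $V_N$ absorbs the $e^{O(n)}$ prefactor with colossal room to spare.

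Neither step presents a serious obstacle. The only point requiring care is the numerical bookkeeping in the first part, which is marginally tight (exponent $\approx -0.115\,n$ versus the target $-n/10$). If this were too loose, one could sharpen via Lemma~\ref{concentration} (Borell--TIS) together with the observation that $G_z$ is effectively constant on $2^{(1-\delta^2)n}$-blocks, which reduces the effective index set from $N^2$ to $N^{2\delta^2}$ points; but as shown, the crude union bound already suffices.
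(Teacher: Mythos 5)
Your proposal is correct and matches the paper's proof essentially verbatim: the paper also bounds $\var(\theta_z)\le k\delta^2 m\le\delta^2 n$, applies a Gaussian tail bound at threshold $\tfrac52\delta\log N$ to get $e^{-\frac{25\log^2 2}{8}n}$, unions over $N^2$ vertices, and checks $2\log 2-\tfrac{25}{8}\log^2 2<-1/10$; the second inequality is stated by the paper to follow by the same method (computing $\var(\varphi_{N,u}-\varphi_{N,v})=O(k)$ via Lemma~\ref{covariance}), exactly as you do. Your parenthetical about $\log_2$ is unnecessary since the paper's own proof makes clear that the unsubscripted $\log$ is the natural logarithm, which is the convention you correctly used in the main computation.
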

\begin{proof}
The proof of these two equalities are highly similar to each other, and thus we omit the proof of \eqref{eq-continuity}.
Denote $\theta_z =  \sum_{j=\lceil (1 - \d^2) m\rceil }^{m-1} \psi_{j,z}$ for short. Note for each $z$, $\Var (\theta_z) \le k\d^2 m \le \d^2 n$.  Thus,
 $$
\P \( \theta_z > \tfrac5 2 \d \log N \) \le 2 e^{- \frac 1 {2 \Var (\theta_z)} \frac {25}4 \d^2 \log^2 N} \le  2 e^{- \frac {25 \log^2 2} 8 n } .
 $$
 Therefore, a simple union bound  yields that
 \belowdisplayskip=-12pt
\begin{align*}
\P \( \exists z \in V_N {\rm \ such \ that \ } \theta_z > \tfrac 5 2 \d \log N \)
 \le
2 e^{- \frac {25 \log^2 2} 8 n}   N^2 \leq  2 e^{- n/10 }.
 \end{align*} \qedhere
\end{proof}

 \begin{lemma} \label{expmean}
Suppose $B \subset V_N$. Then for all $a > 1 / \sqrt{2 \pi k}$,
 $$
\E \exp \(  a  ( \max_{z \in B} \psi_{j,z} - \E \max_{z \in B} \psi_{j,z} ) \) \le 3 \sqrt{2 \pi ka^2} e^{\frac 1 2 ka^2 } .
  $$
 \end{lemma}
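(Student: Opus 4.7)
The plan is to combine the Gaussian concentration bound of Lemma~\ref{concentration} with a direct computation of the moment generating function of a sub-Gaussian random variable. First I would observe that $\Var(\psi_{j,z})$ is exactly $k$ for every $z \in V_N$: indeed, $|\mathcal{B}_{jk}(z)| = 2^{2jk}$ and each $b_{jk,B}$ has variance $2^{-2jk}$, so the prefactor $\sqrt{k}$ in the definition \eqref{eq-def-psi} gives
\[
\Var(\psi_{j,z}) = k \cdot 2^{2jk} \cdot 2^{-2jk} = k.
\]
Hence Lemma~\ref{concentration} applied with $\sigma^2 = k$ yields, for the centered maximum $X := \max_{z\in B}\psi_{j,z} - \E \max_{z\in B}\psi_{j,z}$, the sub-Gaussian tail
\[
\P(|X| \geq x) \leq 2 e^{-x^2/(2k)}, \qquad x \geq 0.
\]

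Next I would compute $\E e^{aX}$ by the layer-cake identity
\[
\E e^{aX} = \int_{-\infty}^{\infty} a e^{as}\, \P(X > s)\, ds,
\]
and split the integral at $s=0$. On $(-\infty,0]$ the trivial bound $\P(X>s) \leq 1$ contributes at most $\int_{-\infty}^{0} a e^{as}\, ds = 1$. On $[0,\infty)$ the sub-Gaussian tail gives, after completing the square $as - s^2/(2k) = ka^2/2 - (s-ak)^2/(2k)$,
\[
\int_{0}^{\infty} 2 a e^{as - s^2/(2k)}\, ds = 2 e^{ka^2/2} \int_{0}^{\infty} a e^{-(s-ak)^2/(2k)}\, ds \leq 2 \sqrt{2\pi k a^2}\, e^{ka^2/2},
\]
where the last step uses $\int_{-\infty}^{\infty} e^{-u^2/2}\, du = \sqrt{2\pi}$ after the change of variables $u = (s-ak)/\sqrt{k}$.

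Finally, the hypothesis $a > 1/\sqrt{2\pi k}$ is exactly $\sqrt{2\pi k a^2} > 1$, and a fortiori $1 \leq \sqrt{2\pi k a^2}\, e^{ka^2/2}$. This lets me absorb the additive ``$1$'' from the negative-$s$ contribution into the main term, producing the claimed constant $3$. The whole argument is a routine sub-Gaussian exponential-moment estimate; the only mildly delicate point is the role of the assumption on $a$, whose sole purpose is precisely to turn the additive constant into a multiplicative one.
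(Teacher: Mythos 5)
Your proof is correct and follows essentially the same route as the paper's. The paper writes $\E Y \le 1 + \int_1^\infty \P(Y > x)\,dx$ for $Y = e^{a(T-\E T)}$ and substitutes $x = e^{as}$; you use the layer-cake identity $\E e^{aX} = \int_{-\infty}^\infty a e^{as}\,\P(X>s)\,ds$ directly and split at $s=0$. These are the same computation up to a change of variables -- the ``$1$'' from your $(-\infty,0]$ piece is exactly the paper's leading ``$1$'' -- and both conclude by completing the square and using $a > 1/\sqrt{2\pi k}$ to fold the additive constant into the multiplicative one.
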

\begin{proof} Denote $T = \max_{z \in B} \psi_{j,z}$ for short. Then,
 \begin{eqnarray*}
\E e^{ a ( T - \E T )}
 & \le &
1 + \int_1^\infty \P \( e^{a(T - \E T )} > x \) d x
 \\ & \le &
1 + \int_0^\infty \P \(  T - \E T > \frac x a \) e^x d x.
 \end{eqnarray*}
By Lemma \ref{concentration}, $\P \(  T - \E T > \frac x a \) \le 2 \exp (-\frac {(x/a)^2}{2 k})$. It follows that

 \begin{eqnarray*}
\E e^{ a ( T - \E T )}
 & \le &
1 + \int_0^\infty 2 e^{- \frac {x^2}{2ka^2}} e^x d x
 \\ & = &
1 + \int_0^\infty 2 e^{- \frac 1{2ka^2} (x - ka^2)^2 + \frac 1 2 ka^2} d x
 \\ & \le &
1 + 2 e^{\frac 1 2 ka^2 } \sqrt{2 \pi ka^2
} \le
3 e^{\frac 1 2 ka^2 } \sqrt{2 \pi ka^2},
 \end{eqnarray*}
where in the last inequality we use $1 \le \sqrt{2 \pi ka^2}$.
\end{proof}

 \begin{lemma} \label{meanofmax}
(\cite[Theorem 1.2]{DRZ15}) Let $m_N = 2 \sqrt {\log 2} n - \frac {3}{4 \sqrt{\log 2}} \log n$. Then there exists a constant $C_K$, depending on $K$, such that
 $$
| \E \max_{z \in V_N} \varphi_{N,z} - m_N | \le C_K .
 $$
 \end{lemma}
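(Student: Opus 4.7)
Since the statement explicitly cites \cite[Theorem 1.2]{DRZ15}, my plan is to reduce to that theorem by verifying its hypotheses for the $K$-coarse MBRW. Lemma \ref{covariance} above asserts $|\Cov(\varphi_{N,z},\varphi_{N,w}) - (km - \log_2(|z-w|\vee 1))| \le 5k+1$, and since $n = km + O(k)$, this immediately implies both (A.0) and (A.1) with constants depending only on $k$, which is the setting of \cite{DRZ15}. The $K$-dependent constant $C_K$ in the claim absorbs this $k$-dependence. This one-line reduction is the cleanest route.

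If one wanted a self-contained proof, I would exploit the manifestly hierarchical structure: $\varphi_{N,z} = \sum_{j=0}^{m-1}\psi_{j,z}$ exhibits the field as a mild perturbation of a $4^k$-ary branching random walk with Gaussian increments of variance $k$ across $m$ levels. For the upper bound $\E \max \le m_N + C_K$, the plan is a truncated first moment argument in the style of Bramson: count the expected number of vertices $z$ with $\varphi_{N,z}\ge m_N + y$ subject to a barrier constraint on the partial sums $S_j(z) = \sum_{i<j}\psi_{i,z}$ (e.g.\ $S_j(z) \le \tfrac{j}{m}m_N + C\sqrt{k\,j(m-j)/m}$), and use Gaussian tail estimates combined with ballot-type inequalities to bound this expectation by $C e^{-cy}$, from which $\E \max \le m_N + C_K$ follows by integration. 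For the lower bound, I would apply a modified second moment method on the level set $\{z : \varphi_{N,z} \in [m_N - y, m_N]\}$ intersected with the same barrier event; the hierarchical near-independence at scales that are multiples of $k$ ensures that the second moment is comparable to the square of the first moment, yielding a matching lower bound via Paley--Zygmund.

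The principal obstacle in a direct proof is the precise Bramson-type logarithmic correction $-\tfrac{3}{4\sqrt{\log 2}}\log n$, which is strictly sharper than what naive first/second moment arguments produce (those would give only $-\tfrac{1}{4\sqrt{\log 2}}\log n$). Matching this extra $\tfrac{1}{2\sqrt{\log 2}}\log n$ on both sides demands the barrier truncation together with careful handling of the small non-exact-independence between distinct subtrees of the hierarchy, which is precisely the delicate part of \cite{DRZ15}. Given that the $K$-coarse MBRW falls squarely in the class covered there, invoking the cited theorem is by far the preferred route, and no independent argument is needed.
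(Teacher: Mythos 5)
Your first paragraph is exactly what the paper does: Lemma \ref{meanofmax} is presented as a pure citation to \cite[Theorem 1.2]{DRZ15}, with the applicability to the $K$-coarse MBRW already established via Lemma \ref{covariance} (which shows (A.0) and (A.1) hold with constants of order $k$, absorbed into $C_K$). The barrier/second-moment sketch in your remaining paragraphs is a reasonable account of what a self-contained proof would entail, but the paper supplies no such argument and none is needed.
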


 \begin{lemma}\label{maxinbox}
(\cite[Theorem 4.1]{A90}) There exists a universal constant $C_F$ with the following property. Let $B \subset \mathbb{Z}^2$ be a box of side length $b$ and $\{ G_w : w \in B \}$ be a mean zero Gaussian field satisfying
 $$
\E (G_z - G_w)^2 \le |z-w|/b \ \ \ \mbox{for all \ } z,w \in B.
 $$
Then $\E \max_{w \in B} G_w \le C_F$.
 \end{lemma}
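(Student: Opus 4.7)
The plan is to control $\E \max_{w \in B} G_w$ by Dudley's entropy bound (equivalently, a chaining argument), using the intrinsic pseudo-metric $d(z,w) = \sqrt{\E(G_z - G_w)^2}$. The hypothesis gives $d(z,w) \le \sqrt{|z-w|/b}$. Since $B$ has side length $b$, we have $|z-w| \le 2b$ for all $z,w \in B$, hence the $d$-diameter of $B$ is at most $\sqrt{2}$. This boundedness of the diameter is what makes the conclusion a universal constant, independent of $b$.

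The key computation is the covering number $N(\epsilon)$ of $B$ in the metric $d$. Whenever $|z-w|_\infty \le \epsilon^2 b$ we have $d(z,w) \le \epsilon$, so $B$ can be covered by $\ell_\infty$-boxes of side $\epsilon^2 b$; this gives $N(\epsilon) \le (1 + 1/\epsilon^2)^2 \le C_1 \epsilon^{-4}$ for $\epsilon \ge 1/\sqrt{b}$. For the (irrelevant in the end) regime $\epsilon < 1/\sqrt{b}$, we use the trivial bound $N(\epsilon) \le |B| = b^2$.

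I would then apply Dudley's inequality:
\begin{equation*}
\E \max_{w \in B} G_w \;\le\; 24 \int_0^{\sqrt{2}} \sqrt{\log N(\epsilon)}\, d\epsilon \;\le\; 24 \int_0^{1/\sqrt{b}} \sqrt{2 \log b}\, d\epsilon + 24 \int_{1/\sqrt{b}}^{\sqrt{2}} \sqrt{\log C_1 + 4 \log(1/\epsilon)}\, d\epsilon.
\end{equation*}
The first piece equals $24 \sqrt{2 \log b}/\sqrt{b}$, which is bounded uniformly in $b$ (and in fact tends to $0$). The second piece is bounded by $\int_0^{\sqrt{2}} \sqrt{\log C_1 + 4 \log(1/\epsilon)}\, d\epsilon$, a convergent deterministic integral independent of $b$. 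Summing gives the desired universal constant $C_F$.

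The proof is essentially routine once one identifies the intrinsic metric and computes the covering numbers; no step is a serious obstacle. The only point that needs a little care is handling the small-$\epsilon$ regime $\epsilon < 1/\sqrt{b}$ (where the intrinsic-metric ball may contain only a single lattice point), but the above split takes care of it and uniformly produces a constant independent of $b$. An alternative route, yielding the same conclusion, is to pass to a generic chaining / majorizing-measure estimate, but Dudley suffices here because the metric entropy has only polynomial growth.
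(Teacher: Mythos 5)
Your proof is correct, and it is essentially the same argument the paper invokes: the cited result, Adler's Theorem 4.1, is precisely the metric-entropy (Dudley) bound you apply, so you have simply unwound the citation into a self-contained chaining computation. Two tiny cosmetic remarks that do not affect the conclusion: the $\ell_\infty$-diameter of a box of side $b$ is $b$, not $2b$, so the $d$-diameter is at most $1$ rather than $\sqrt{2}$; and for $\epsilon$ near the diameter the quantity $\log C_1 + 4\log(1/\epsilon)$ can be negative, so one should bound the integrand by $\sqrt{\max\{0,\log C_1 + 4\log(1/\epsilon)\}}$ (or simply use that $\log N(\epsilon) = 0$ once $\epsilon$ exceeds the diameter). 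Neither affects convergence of the entropy integral or the uniformity of the bound in $b$.
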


 \begin{lemma} \label{Emax}
For all $j$ and $B$ of side length $2^{jk} b$ (with $b$ being a positive integer), we have that $\E \max_{z \in B} \psi_{j,z}  \le 2 C_F \sqrt {kb}$.
 \end{lemma}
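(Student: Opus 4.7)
The plan is to reduce the claim to a direct application of Lemma~\ref{maxinbox} after a suitable rescaling. The key input is the increment bound on $\psi_{j,z}$ that follows from \eqref{covofpsi}.

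First I would unpack $\Var(\psi_{j,z})$: setting $z=w$ in \eqref{covofpsi} gives $\Var(\psi_{j,z}) = k$ for every $z$. Next, for $z,w \in B$, I compute
\[
\E(\psi_{j,z}-\psi_{j,w})^2 \;=\; 2\Var(\psi_{j,z}) - 2\,\E\psi_{j,z}\psi_{j,w} \;\le\; 2k - 2\Bigl(k - 2k\cdot \tfrac{|z-w|}{2^{jk}}\Bigr) \;=\; 4k\cdot \tfrac{|z-w|}{2^{jk}},
\]
using the lower bound from \eqref{covofpsi} (valid for $z,w$ in the same box of side $2^{jk}$, so certainly for $z,w\in B$).

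Now I would renormalize. Define the centered Gaussian field $G_z \df \psi_{j,z}/(2\sqrt{k})$ on $B$. The preceding estimate gives
\[
\E(G_z-G_w)^2 \;\le\; \frac{1}{4k}\cdot 4k\cdot \frac{|z-w|}{2^{jk}} \;=\; \frac{|z-w|}{b},
\]
where $b = 2^{jk}$ is precisely the side length of $B$. This is exactly the hypothesis of Lemma~\ref{maxinbox}, so $\E \max_{z\in B} G_z \le C_F$, which rescales to $\E \max_{z\in B} \psi_{j,z} \le 2C_F\sqrt{k}$.

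There is no real obstacle here; the only point to check carefully is that the lower bound in \eqref{covofpsi} (which was stated under the condition $|z-w| < 2^{jk}$) applies to all pairs $z,w \in B$, which is automatic since $B$ has side length $2^{jk}$ and we use the $\ell_\infty$ distance.
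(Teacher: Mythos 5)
Your proof is correct and follows the same route as the paper: derive the increment bound $\E(\psi_{j,z}-\psi_{j,w})^2 \le 4k\,|z-w|/2^{jk}$ from \eqref{covofpsi}, then invoke Lemma~\ref{maxinbox}. The only cosmetic difference is that you make the rescaling $G_z = \psi_{j,z}/(2\sqrt{k})$ explicit, whereas the paper applies Lemma~\ref{maxinbox} directly with the scaling left implicit.
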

\begin{proof} Suppose $z,w \in B$. If $| z - w| < 2^{jk}$, by \eqref{covofpsi},
 $$
\E (\psi_{j,z} - \psi_{j,w} )^2 = 2 k - 2 \E \psi_{j,z}  \psi_{j,w}  \le 4 k \times \frac { |z-w| }{2^{jk}}.
 $$
Otherwise, $\E (\psi_{j,z} - \psi_{j,w} )^2 = 2k \le 4 k \times \frac { |z-w| }{2^{jk}}$.
By Lemma \ref{maxinbox}, $\E \max_{z \in B} \psi_{j,z}  \le C_F \sqrt {4kb}$.
\end{proof}

\subsection{The $k$-block-nest} \label{S.blocknest}

This section is devoted to extract a subset $Q$ of a path $P$, which ``stretches uniformly" (roughly speaking, for boxes of the same side length along $P$, the subset $Q$ has the same number of points in each box). Our construction is based on an inductive procedure, and we first give a description on the mental picture behind it. Let $\BD_{jk}$ be those elements in $\B_{jk}$ which have lower left corners in $2^{jk} \mathbb{Z}^2$. We see that $(\BD_{jk})_{j=m, \ldots, 0}$ forms an increasing sequence of nested uniform partitions of $V_N$. A natural attempt for the initial round of the induction is to consider the intersections of $P$ with all boxes in $\BD_{(m-1)k}$, which lead to a few subpaths of $P$. However, there are a couple of issues with this oversimplified attempt:
\begin{itemize}
\item Despite that the path $P$ is connected and self-avoiding, the intersection of $P$ with a box in $\BD_{(m-1)k}$ could contain a number of disconnected subpaths (the disconnectedness can potentially result in difficulties in entropy estimates).

\item Some of the boxes in  $\BD_{(m-1)k}$ may intersect $P$ substantially more than other boxes, which would violate our desired ``uniformly stretching'' property of $Q$.
\end{itemize}

Fix a box $B$ in $\BD_{(m-1)k}$. In order to address the issue of disconnectedness, we only take the ``first'' connected piece of $P$ in $B$, denoted by $P^{(1)}$. Furthermore, in order to address the issue of non-uniformity, we only keep an initial segment $P^{(2)}$ of $P^{(1)}$ so that $P^{(2)}$ intersects with precisely $K_1$ boxes in $\BD_{(m-2)k}$ for a pre-fixed number $K_1$ (depending on $K$). However, in the fix for the second issue, we face the difficulty that $P^{(1)}$ might be short so that it is impossible to extract such $P^{(2)}$. In order to address this issue and carry out the induction, we will define {\em traversals} of a box, and replace the concept of intersection with traversal. Then, from a traversal $P^{(1)}$, we are able to extract $P^{(2)}$ to traverse exactly $K_1$ boxes in $\BD_{(m-2)k}$. This ensures that our induction would in the end give us a large subset $Q\subseteq P$ which stretches uniformly.  In what follows, we will provide the formal construction and verification for the aforementioned inductive procedure, which we will refer to as the \emph{$k$-block-nest program}.

Suppose $B$ is a box of side length $\ell$. Let $B^*$ and $B^{**}$ be the boxes centered at $B$ of side lengths $3 \ell$ and $7 \ell $, respectively. That is,
 $$
B^* = \{ z : d (z, B) \le \ell \},  \ \ \ B^{**} = \{ z : d (z, B) \le 3 \ell \} ,
 $$
where $d(z,B) : = \min \{ |z-w| : w \in B \}$. By {\em traversal} of a box $B$, we mean a path in $B^*$ from $\partial B$ to $\partial B^*$. By \emph{distance} of a path, we mean the $\ell_\infty$-distance between the two end points of the path. Then each traversal of $B$ has distance at least the side length of $B$. First, let $j \ge 1$. We aim to extract traversals of $K_1$ (to be defined) boxes in $\BD_{jk}$ from a traversal of a box in $\BD_{(j+1)k}$.

 \begin{defn}
Suppose $B_1, B_2, \ldots, B_H \in \BD_{jk}$. They are called $j$-separated if $d( B^*_h, B^*_{h^\prime} ) > 2^{jk}$, $\forall h \neq h^\prime$, and coherent if in addition $d (B_{h+1}, B^{**}_h) = d (B_{h+1}, \cup_{s=1}^h B^{**}_s) = 1$, $\forall h = 1, \cdots, H-1$.
 \end{defn}

 \begin{remark}
The definition of $j$-separation is to ensure the independence of $\max_{z \in B^*_h} \psi_{j,z}$, $1 \le h \le H$. The further definition of coherence enables us to estimate the number of all such sequences of boxes. Both of these two aspects are essential in the heuristic argument at the beginning of Section~\ref{pfofcard}.
 \end{remark}

 \begin{defn}
Let $B_h \in \BD_{jk}$ and $P^{(h)}$ be a traversal of $B_h$, $\forall h = 1, \ldots, H$. We call $P^{(h)}$'s $j$-sections with number $H$ and $B_h$'s the associated $j$-blocks if  $B_1, \ldots, B_H$ are $j$-separated.
 \end{defn}

In order to define the $k$-block-nest of a path, we will give the {\em induction operation} first. Suppose $B \in \BD_{(j+1)k}$, and $P$ is a traversal of $B$. We denote by $P_0, P_1, P_2, \ldots$ the sequence of vertices along the path $P$. Let $s_1 = 0$ and $B_h = {BD}_{jk} (P_{s_h})$, where $BD_{jk} (z)$ is the unique element in $\BD_{jk}$ containing $z$. Let $s_{h+1}$ be the first time $P$ departs $\cup_{1\leq i\leq h}B^{**}_i$ forever, i.e,
 \begin{equation} \label{Eq.sh}
s_{h+1} = \inf \{ s \ge s_h + 1 : P_r \notin \cup_{1\leq i\leq h}B^{**}_i , \mbox{ for all } r \ge s \}\,,
 \end{equation}
where we use the convention that $\inf \emptyset = \infty$. For $h=1, 2, \ldots$, define
 $$
t_h = \inf \{ r \ge s_h + 1 : P_r \in \partial B^*_h \}
 $$
if $s_h < \infty$, and $t_h = \infty$ otherwise.

\begin{figure}[h]
\hspace{1cm}  \includegraphics[width=15cm]{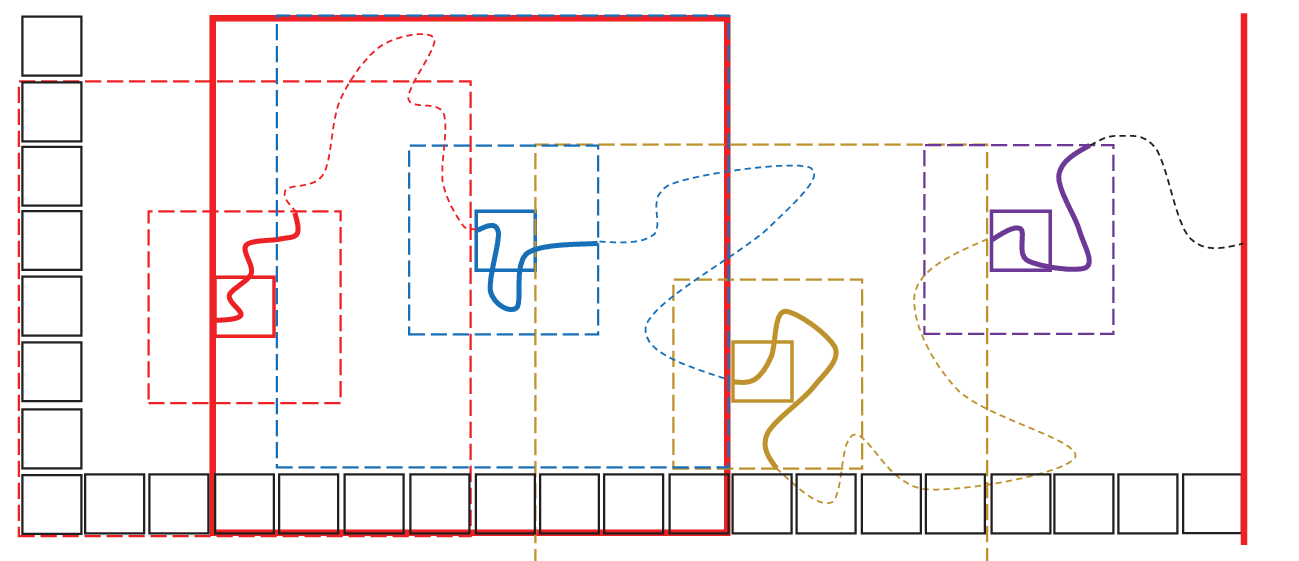}
\\ \vspace{-0.5cm}  \caption{ $k=3$ and $K=8$. The biggest red box is $B$, and the rightmost red line lies in $\partial B^*$. The smallest boxes are in $\BD_{jk}$. The curve stands for $P$, where the solid parts are $P|_{[s_h, t_h]}$'s with the red one corresponding to $h=1$, and the dashed parts will be removed to obtain $Q$. The smallest colorful boxes are $B_h$'s, and the colorful boxes with dashed lines are $B_h^*$'s and $B_h^{**}$'s. }\label{G.blocknest} \end{figure}

We denote by $\tau$ the first $h$ so that $t_{h+1} = \infty$. Since $B_1, B_2, \ldots, B_\tau$ are coherent and $P$ has distance $\ge 2^{(j+1)k}$, we see that $|P_{s_{h+1}} - P_{s_h}| \le 4 \times 2^{jk}$ and furthermore $\tau$ is no less than
 $$
K_1 = \frac {2^{(j+1)k}}{4 \times 2^{jk}}  = 2^{k-2} .
 $$
Note $t_h < s_{h+1}$. Then, we can obtain $j$-sections $P|_{[s_h,t_h]}$, $h=1,\cdots, K_1$ from $P$, as well as the associated blocks $B_h$'s, which are coherent (and disregard the boxes $B_{K_1 + 1}, \ldots, B_\tau$). Notice that $B_1$ is on the boundary of $B$, which means $\partial B_1 \cap \partial B \neq \emptyset$. In general, suppose that $P$ consists of $(j+1)$-sections with number $H$. We then deal with each section of $P$ as above, and obtain $H$ families of $j$-sections as well as their associated blocks. Since different sections of $P$ have distances larger than $2^{(j+1)k}$, $j$-sections and associated $j$-blocks in different families have distances larger than $2^{jk}$. Finally, combining all families together, we obtain $j$-sections with number $H K_1$. In addition, the associated blocks $B_{(r-1)K_1 +1}, \ldots, B_{rK_1}$ (in each family) are coherent for $r = 1, \ldots, H$.

Now we are ready to construct the $k$-block-nest of a path $P$ in $V_N$ with distance at least $N^{1- \k}$. Denote
 $$
j_0 =  \lfloor (1 - \frac 1 2 \d^2) m \rfloor\,.
 $$
Then $N^{1-\k } > 4 \times 2^{j_0 k}$ since $\k < \frac 1 2 \d^2$ and $n$ is large enough. Consequently, we can delete the parts of $P$ before it hits the boundary of $B_{j_0, 1} = BD_{j_0 k} (P_0)$ and after it hits $\partial B^*_{j_0,1}$, where $P_0$ is the starting point of $P$. Then we obtain a traversal of $B_{j_0,1}$, denoted by $P^{(j_0)}$. By the induction operation, for each $j = j_0-1, \ldots, 1$ we obtain a family of $j$-sections  $\mathcal P^{(j)}$ and the associated $j$-blocks $\{B_{j,h} , h = 1, \ldots, K_1^{j_0 - j} \}$. We call $\mathcal P^{(j)}$ the family of $j$-sections of $P$, and $\{B_{j,h}\}$ the family of $j$-blocks of $P$.

Next, we are going to bound the number of possible families of $j$-blocks. Note $B_{j_0, 1}$ has $2^{2(m+1-j_0) k}$ possible choices. Note $B_{j, (r-1)K_1 + 1}$ is on the boundary of $B_{j+1,r}$ by the induction operation. Given each $B_{j+1, r}$, there exist at most  $4 ( 2^k - 1) $ possible choices of $B_{j, (r-1)K_1 + 1}$. By the coherent property, there are at most $32$ possible choices of $B_{j,h+1}$ for each $B_{j,h}$, $h=(r-1) K_1 +1, \ldots, r K_1-1$. Thus, there are at most
 $$
4 \times 2^k \times 32^{K_1-1} \le 2^{2+k+5(K_1 - 1)} \le 2^{5 K_1 + k} =: c
 $$
possible families of coherent blocks $B_{j, (r-1)K_1 + 1}, \ldots, B_{j, rK_1}$, given $B_{j+1,r}$. By induction, for $j=j_0-1, \ldots, 1$ there are at most
 $$
2^{2(m+1-j_0)k} c^1 c^{K_1}  c^{K_1^2} \ldots c^{K_1^{j_0-1-j}} \le 2^{2(m+1-j_0)k} c^{\frac {K_1^{j_0-j}}{K_1 - 1}} = 2^{2(m+1-j_0)k + \frac {5 K_1 + k}{K_1 - 1} {K_1^{j_0 - j}} } \le 2^{2(m+1-j_0) k + 6 K_1^{j_0 - j}}
 $$
possible families of $j$-blocks, where the last inequality holds for $k \ge 6$. The above upper bound is also valid for $j = j_0$.

At last, we will set $j=0$ and give $Q$. For each $1$-block $B$, we wish to use the induction operation introduced above to extract the part $Q \cap B^*$ along the corresponding $1$-section, which is a traversal of $B$. Note each $B_h$ in the induction operation is a single-point set since $j=0$ now. In order to allow us to take advantage of the $q$-dependence of the Bernoulli process $\{ \xi_{N,z}, z \in V_N \}$ later, we replace $B_i^{**}$ with $\{ w : d (w, B_i) \le q \}$ in \eqref{Eq.sh}. This is to ensure each pair of the $s_h$'s defined in \eqref{Eq.sh} has distance at least $q+1$, so that $\xi_{N,w}$, $w \in Q$ are independent. Then, we obtain
 $$
K_2 : = \lfloor \frac {2^k}{q + 1} \rfloor
 $$
points of $s_h$'s in $B^*$, and they compose $Q \cap B^*$. We call $Q$ the {\em output} of $P$ via the $k$-block-nest program. Similar reasoning as above implies that given a $1$-block $B$, there are at most
 $$
4 \times 2^k \times \big( 4 (2q+1) \big)^{K_2-1} \le 2^k (8q+4)^{K_2} = : \tilde c
 $$
possible $Q \cap B^*$. Thus there are at most
 $$
2^{2(m+1-j_0) k + 6 K_1^{j_0 - 1}} \times \tilde c^{K_1^{j_0-1}} =   2^{2(m+1-j_0) k + (k+6)K_1^{j_0-1}} (8q+4)^{K_1^{j_0 - 1} K_2} \le 2^{2(m-j_0) k} (8q + 5)^{K_1^{j_0 - 1} K_2}
 $$
possible outputs for $k \ge k^\prime (q)$, where $k^\prime(q)$ is taken to ensure $2^{\frac {k+8}{K_2}} \le \frac {8q+5}{8q+4}$ and $K_1, K_2 \ge 1$. Furthermore, we take $k^\prime (q) > \log_2 q$ to ensure that the points in the output $Q$ have distances at least $q+1$ to each other. Then, there exists $k^\prime (q)$ such that following propositions hold.

 \begin{prop} \label{blocknest}
Let $k \ge k^\prime (q)$. Suppose $P$ is a path of distance at least $N^{1-\k }$, $Q$ is the output of $P$, and $B_{j,h}$, $1 \le h \le K_1^{j_0-j}$ are the $j$-blocks of $P$, $\forall j=1, \ldots, j_0$. Then, the following properties hold.

\noindent a) For $z,w \in Q$, $z \neq w$, we have $|z-w| > q$.

\noindent b) For $1 \le j \le j_0 - 1$ and $h \neq h^\prime$, we have $d ( B^*_{j, h}, B^*_{j,h^\prime} ) > 2^{jk}$.

\noindent c)  $|Q| = K_1^{j_0-1} K_2 $, $Q = \cup_{h=1}^{K_1^{j_0-j}} ( Q \cap B^*_{j,h})$, and $|Q \cap B^*_{j,h}| =K_1^{j-1} K_2$, $\forall h$.
 \end{prop}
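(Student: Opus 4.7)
The plan is to prove parts (c) and (b) by downward induction on $j$ (from $j_0$ down to $1$), tracking the inductive construction of the $k$-block-nest, and then deduce (a) from the separation at $j=1$ combined with the $q$-buffered rule used in the terminal $j=0$ step.

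For part (c), the base case $j = j_0$ is immediate: the deletion performed before applying the first induction operation makes $P^{(j_0)}$ a traversing of $B_{j_0,1}$, and this is possible since $N^{1-\kappa} > 4 \cdot 2^{j_0 k}$ for $n$ large (using $\kappa < \tfrac12 \delta^2$). For the inductive step, one applies the induction operation to each traversing of a $(j{+}1)$-block and checks the key quantitative bound $\tau \ge K_1 = 2^{k-2}$. This bound uses that a traversing of $B_{j+1,r}$ has $\ell_\infty$-distance at least $2^{(j+1)k}$ between its endpoints, while $|P_{s_{h+1}} - P_{s_h}| \le 4 \cdot 2^{jk}$ because $B_{h+1}$ is adjacent to $\cup_{i \le h} B^{**}_i$ (a box of side $7 \cdot 2^{jk}$). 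Thus each parent yields exactly $K_1$ sub-blocks, the counts multiply to give $K_1^{j_0-j}$ many $j$-blocks and $|Q \cap B^*_{j,h}| = K_1^{j-1} K_2$ by propagating down to $j=0$. The decomposition $Q = \bigcup_h (Q \cap B^*_{j,h})$ follows since each $j$-section, and therefore each point of $Q$ descended from it, lives in the corresponding $B^*_{j,h}$.

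For part (b), separation within a common parent $(j{+}1)$-block is built into the induction operation through the $B^{**}_i$ buffer in the recurrence \eqref{Eq.sh}: this is precisely the coherence property, which implies $j$-separation. For sub-blocks $B_{j,h}, B_{j,h'}$ sitting in distinct parents $B_{j+1,r}, B_{j+1,r'}$, one uses the inductive hypothesis $d(B^*_{j+1,r}, B^*_{j+1,r'}) > 2^{(j+1)k}$ together with the geometric observation $B_{j,h} \subset B^*_{j+1,r}$ (since $P_{s_h}$ lies on a traversing contained in $B^*_{j+1,r}$); enlarging to $B^*_{j,h}$ costs at most $2^{jk}$ on each side, so a triangle inequality gives $d(B^*_{j,h}, B^*_{j,h'}) \ge 2^{(j+1)k} - 2 \cdot 2^{jk} > 2^{jk}$ for $k \ge 2$. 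Part (a) then follows easily: two points of $Q$ in the same $1$-block $B^*_{1,h}$ are separated by $> q$ because the modified recurrence (using $\{w : d(w, B_i) \le q\}$ in place of $B^{**}_i$) excludes all earlier $q$-neighborhoods, while two points in different $1$-blocks have distance $> 2^k > q$ by taking $k'(q) \ge \log_2(q+1)$.

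The main obstacle will be the geometric bookkeeping for part (b) across parent boundaries, since $B^*_{j,h}$ is not in general contained in $B^*_{j+1,r}$ but only in its $2^{jk}$-neighborhood; one has to be careful that the slack between $2^{(j+1)k}$ and $2^{jk}$ is genuinely large enough for $k \ge k'(q)$, which is where the choice of threshold $k'(q)$ enters. Everything else is pure counting against the construction and should be essentially routine.
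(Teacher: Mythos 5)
Your proposal follows essentially the same approach as the paper: there is no separate \emph{Proof} environment for this proposition, since the entire $k$-block-nest construction in Section~2.2 serves as its verification, and your outline --- downward induction on $j$ for part (c) via the quantitative bound $\tau \ge K_1$, coherence from the $B^{**}$-buffer in \eqref{Eq.sh} for same-parent separation in part (b) together with inductive geometry across parent blocks, and the $q$-modified terminal ($j=0$) step for part (a) --- reflects that construction faithfully. One small imprecision worth noting: $B_{j,h} = BD_{jk}(P_{s_h})$ is only guaranteed to \emph{intersect} $B^*_{j+1,r}$ rather than be contained in it (the dyadic box around a point near $\partial B^*_{j+1,r}$ can poke out by up to $2^{jk}-1$), but as you yourself observe at the end, the slack between $2^{(j+1)k}$ and $O(2^{jk})$ comfortably absorbs this once $k \ge k'(q)$.
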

 \begin{prop} \label{jkblocks}
Let $k \ge k^\prime(q)$. For all paths with distances at least $N^{1-\k }$, there are at most $2^{2(m+1-j_0) k + 6 K_1^{j_0 - j}}$ possible families of $j$-blocks, $\forall j = 1£¬ \ldots, j_0$, and at most $ 2^{2(m-j_0) k} (8q+5)^{K_1^{j_0 - 1} K_2}$ possible outputs.
 \end{prop}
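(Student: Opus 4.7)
The plan is to prove Proposition~\ref{jkblocks} by induction on $j$, working downward from $j = j_0$ and exploiting the tree structure produced by the $k$-block-nest construction. The key observation is that the induction operation naturally organizes the $K_1^{j_0-j}$ boxes at level $j$ into $K_1^{j_0-j-1}$ coherent sequences of length $K_1$, each rooted at the boundary of a unique parent $(j+1)$-block. Consequently, the count factorizes across this tree, and the geometric constraints at each node contribute only small combinatorial factors that can be made explicit.

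For the base case $j = j_0$, the root box $B_{j_0,1}$ is forced to be the unique element of $\BD_{j_0 k}$ containing $P_0$, so the number of choices is bounded by the total cardinality of $\BD_{j_0 k}$, which is at most $2^{(m+1-j_0)k}$. For the inductive step, given a family of $(j+1)$-blocks, I would count the possibilities for the $j$-children inside each parent $B_{j+1,r}$ as follows. By construction, the first child $B_{j,(r-1)K_1+1}$ must meet $\partial B_{j+1,r}$, and since $B_{j+1,r}$ has side $2^{(j+1)k}$ while each $B_{j,\cdot}$ has side $2^{jk}$, the boundary accommodates at most $4(2^k - 1)$ such boxes. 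Each subsequent $B_{j,h+1}$ (for $h = (r-1)K_1+1, \ldots, rK_1 - 1$) satisfies $d(B_{j,h+1}, B_{j,h}^{**}) = 1$ by coherence, and a direct geometric count yields at most $32$ admissible positions. Hence there are at most $4 \cdot 2^k \cdot 32^{K_1 - 1} \le 2^{5K_1 + k} =: c$ coherent sequences of children per parent. Iterating along the tree and multiplying over all parents at all intermediate levels then produces a bound of $2^{(m+1-j_0)k} \cdot c^{K_1^{j_0-j}/(K_1 - 1)}$, which simplifies to the claimed $2^{(m+1-j_0)k + 6 K_1^{j_0-j}}$ upon substituting $c = 2^{5K_1 + k}$ and using $K_1 = 2^{k-2}$ with $k \ge 6$.

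For the output bound, I would append one further refinement at level $j = 0$. Within each $1$-block $B$, the output points $Q \cap B^*$ form a sequence of $K_2$ vertices obtained from the modified induction operation in which $B_i^{**}$ is replaced by the $q$-neighborhood of $B_i$. The same style of coherent counting, with the $32$-factor replaced by $4(2q+2)$, gives at most $4 \cdot 2^k \cdot (4(2q+2))^{K_2 - 1} \le 2^k (8q+4)^{K_2} =: \tilde c$ choices per $1$-block. Combining with the level-$1$ bound and multiplying over the $K_1^{j_0-1}$ many $1$-blocks yields at most $2^{(m+1-j_0)k} \cdot 2^{(k+6)K_1^{j_0-1}} \cdot \tilde c^{K_1^{j_0-1}}$ outputs, and it remains to rearrange this into the stated form $2^{(m-j_0)k}(8q+5)^{K_1^{j_0-1} K_2}$.

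The one slightly delicate step, and the only reason the threshold $k'(q)$ appears in the hypothesis, is this last arithmetic simplification: the leftover prefactor $2^{(k+7)K_1^{j_0-1}}$ must be absorbed into the $(8q+5)^{K_1^{j_0-1} K_2}/(8q+4)^{K_1^{j_0-1} K_2}$ ratio, which demands $2^{(k+7)/K_2} \le (8q+5)/(8q+4)$. Since $K_2 = \lfloor 2^k / (q+1) \rfloor$ grows exponentially in $k$ while the left-hand exponent grows only linearly in $k$, this inequality holds once $k$ exceeds a threshold depending on $q$, and this is precisely the choice of $k'(q)$. Everything else in the argument is routine bookkeeping of the induction operation, so I expect no further obstacle.
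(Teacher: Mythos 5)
Your proposal follows essentially the same argument the paper uses: a base count of $2^{(m+1-j_0)k}$ for the root block, a per-parent factor $c = 4\cdot 2^k\cdot 32^{K_1-1}\le 2^{5K_1+k}$ for the inductive step, a geometric-series bound $c^{K_1^{j_0-j}/(K_1-1)}\le 2^{6K_1^{j_0-j}}$ (valid once $k\ge 6$), and an extra factor $\tilde c = 2^k(8q+4)^{K_2}$ per $1$-block for the output, with $k'(q)$ chosen so that $2^{(k+7)/K_2}\le (8q+5)/(8q+4)$. The structure and the thresholds all match. One small bookkeeping slip: your intermediate expression for the number of outputs, $2^{(m+1-j_0)k}\cdot 2^{(k+6)K_1^{j_0-1}}\cdot \tilde c^{K_1^{j_0-1}}$, double-counts the $2^k$ inside $\tilde c$; the correct combination of the $j=1$ count $2^{(m+1-j_0)k+6K_1^{j_0-1}}$ with $\tilde c^{K_1^{j_0-1}}$ yields $2^{(m+1-j_0)k+(k+6)K_1^{j_0-1}}(8q+4)^{K_1^{j_0-1}K_2}$, and this is the quantity whose leftover prefactor is $2^{(k+7)K_1^{j_0-1}}$ after extracting $2^{(m-j_0)k}$. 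Since your final threshold argument with $(k+7)$ is consistent with this corrected intermediate form, the slip appears to be notational rather than a genuine error.
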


\subsection{Proof of Theorem~\ref{cardinality}} \label{S.pfofcard}

Since every path of diameter at least $N^{1-\kappa}$ contains a subpath of distance at least $N^{1-\kappa}$, we can assume without loss of generality that $P$ has distance at least $N^{1-\kappa}$. Let $k \ge k^\prime(q)$ and $n$ large enough such that the $k$-block-nest program works, and Propositions~\ref{blocknest} and \ref{jkblocks} hold. Let $Q$ be the $k$-output of $P$. Denote
 $$
\tilde Q : = \{ w \in Q : \varphi_{N,w}  \le 4 \d \log N \} , \ \ \
\hat Q: = \{ w \in Q : \xi_{N,w} = 1 \} .
 $$
We will show the following lemmas for $k$ larger than some $\tilde k (\d, q)$ and $p$ larger than some $p_0 (\d, q)$.
 \begin{lemma} \label{Lemma.Xgood}
$\P (|\tilde Q| \ge \frac \d {3 + 4\d} |Q| \mbox{ for all possible outputs } Q ) \ge 1 - 5 e^{-n/10}$ for $n$ large enough.
 \end{lemma}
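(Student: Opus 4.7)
The plan is to prove the lemma by contradiction, constructing a good event $\mathcal G$ of probability at least $1 - 5e^{-n/10}$ on which every output $Q$ of the $k$-block-nest program satisfies $|\tilde Q|\ge \frac{\delta}{3+4\delta}|Q|$. The event $\mathcal G$ will have three ingredients: first, an envelope $\mathcal G_0 = \{\max_{z\in V_N}|\varphi_{N,z}| \le 4\log N\}$, combining Lemma \ref{meanofmax} (which gives $\E\max\varphi_{N,z}= (2/\sqrt{\log 2})\log N + O(1)$) with Gaussian concentration (Lemma \ref{concentration}); second, $\mathcal G_1$, the event of Lemma \ref{d2mtom} providing $\sum_{j\ge j_0}\psi_{j,z}\le \frac{5\delta}{2}\log N$ uniformly in $z$, which is available because $j_0 \ge (1-\delta^2)m$; and third, $\mathcal G_2 = \bigcap_{j<j_0}\mathcal G_{2,j}$, where $\mathcal G_{2,j}$ asks that, for every admissible $j$-block family $\{B^*_{j,h}\}_{h=1}^{K_1^{j_0-j}}$ arising from some output, the averaged maximum $\bar T_j := K_1^{-(j_0-j)}\sum_h T_{j,h}$ with $T_{j,h} := \max_{z\in B^*_{j,h}}\psi_{j,z}$ satisfies $\bar T_j \le 2C_F\sqrt k + s_j$ for a threshold $s_j$ to be chosen.

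The arithmetic core runs as follows. On the bad event $|\tilde Q| < \frac{\delta}{3+4\delta}|Q|$ intersected with $\mathcal G_0$, at least $\frac{3+3\delta}{3+4\delta}|Q|$ points satisfy $\varphi_{N,z}> 4\delta\log N$ while the remaining at most $\frac{\delta}{3+4\delta}|Q|$ points contribute at least $-4\log N$ each, so a direct count gives $\bar\varphi_Q := |Q|^{-1}\sum_{z\in Q}\varphi_{N,z} > \frac{8}{3}\delta\log N$. Splitting $\bar\varphi_Q = \sum_{j<j_0}Y_j + \sum_{j\ge j_0}Y_j$ with $Y_j := |Q|^{-1}\sum_{z\in Q}\psi_{j,z}$ and using $\mathcal G_1$ for the high-$j$ part forces $\sum_{j<j_0}Y_j > \frac{\delta}{6}\log N$. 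The uniform block-size $|Q\cap B^*_{j,h}| = K_1^{j-1}K_2$ from Proposition \ref{blocknest}(c) gives the deterministic comparison $Y_j \le \bar T_j$, so on $\mathcal G_2$ the low-level sum is at most $\sum_{j<j_0}(2C_F\sqrt k + s_j)$; the contradiction follows once this quantity is shown to be smaller than $\frac{\delta}{6}\log N$.

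To implement $\mathcal G_{2,j}$, I use Proposition \ref{blocknest}(b): the $B^*_{j,h}$ in a family are $2^{jk}$-separated, and since boxes of $\mathcal B_{jk}$ have side $2^{jk}$, the variables $T_{j,1},\ldots,T_{j,K_1^{j_0-j}}$ are mutually independent. Each has $\E T_{j,h} \le 2C_F\sqrt k$ (Lemma \ref{Emax}) and sub-Gaussian tail with variance proxy $k$ (Lemma \ref{concentration}), so $\P(\bar T_j > 2C_F\sqrt k + s_j) \le \exp(-s_j^2 K_1^{j_0-j}/(2k))$. By Proposition \ref{jkblocks} the number of admissible $j$-block families is at most $2^{(m+1-j_0)k + 6K_1^{j_0-j}}$, so a union bound dictates a choice of the form $s_j = O(\sqrt k) + O\bigl(\sqrt{kn/K_1^{j_0-j}}\bigr)$ making $\P(\mathcal G_{2,j}^c) \le e^{-n/2}$. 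Summing over $j$ and using $j_0 \le m = n/k$ together with the geometric series $\sum_{j<j_0}K_1^{-(j_0-j)/2} = O(K_1^{-1/2})$ yields $\sum_{j<j_0}(2C_F\sqrt k + s_j) = O(n/\sqrt k)$, which is less than $\frac{\delta n\log 2}{6}$ as soon as $k \ge k_0(\delta)$; this produces the required contradiction, and a final accounting $\P(\mathcal G_0^c)+\P(\mathcal G_1^c)+\sum_{j<j_0}\P(\mathcal G_{2,j}^c) \le e^{-n/10} + 2e^{-n/10} + j_0 e^{-n/2} \le 5e^{-n/10}$ delivers the probability bound.

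The main obstacle is this very last balance: at scales $j$ close to $j_0$, the entropy of admissible $j$-block families is of order $2^{K_1^{j_0-j}}$ while only $K_1^{j_0-j}$ samples enter $\bar T_j$, so the forced threshold $s_j \asymp \sqrt{kn/K_1^{j_0-j}}$ is much larger than the typical fluctuation of $\bar T_j$; the argument only survives because these inflated thresholds decay geometrically in $j_0-j$ and the resulting factor $1/\sqrt{K_1}$ shrinks the total. It is precisely this trade-off that drives the hypothesis $k \gtrsim 1/\delta^2$, i.e., $K \ge K_0(\delta,q)$, in the theorem.
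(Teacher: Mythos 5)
Your strategy is the same as the paper's: argue by contradiction, use a pointwise lower envelope for $\varphi_{N,z}$, use Lemma~\ref{d2mtom} to strip off the high levels, pass from $Q$ to the block maxima $T_{j,h}$ via Proposition~\ref{blocknest}(c), and close with a Chernoff bound against the entropy count from Proposition~\ref{jkblocks}. The one place you genuinely diverge is the final combinatorial step. The paper reduces by pigeonhole to a \emph{single} scale $j^\ast$ with $\frac1{|Q|}\sum_{w\in Q}\psi_{j^\ast,w}\ge\frac12\delta k\log 2$ and then rules out each fixed scale against a uniform threshold; you instead bound the full sum $\sum_{j<j_0}Y_j$ by $\sum_{j<j_0}\bar T_j$ with $j$-dependent thresholds $s_j$ calibrated to the $j$-level entropy, and derive the contradiction from $\sum_j(O(\sqrt k)+s_j)<\tfrac\delta6 n\log 2$. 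Both routes work and need the same inputs; the paper's pigeonhole is arithmetically cleaner (a single fixed $a=\tfrac12\delta\log 2$ in Lemma~\ref{expmean}), while your version makes the scale-by-scale entropy/concentration trade-off explicit and shows that it survives because $s_j$ decays like $K_1^{-(j_0-j)/2}$.

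A few details need tightening. (1) Lemma~\ref{d2mtom} controls $\sum_{j\ge\lceil(1-\delta^2)m\rceil}\psi_{j,z}$, not $\sum_{j\ge j_0}$; since $j_0>\lceil(1-\delta^2)m\rceil$, the bound does not pass to the shorter tail without redoing the (easy) variance estimate, and the cleanest fix is to split at $\lceil(1-\delta^2)m\rceil$, which still lies below $j_0$ so that the $j$-blocks exist for all $j$ you need. (2) Lemma~\ref{Emax} gives $\E\max_{z\in B}\psi_{j,z}\le 2C_F\sqrt k$ for $B\in\mathcal B_{jk}$, but $T_{j,h}$ is the max over the larger $B^\ast_{j,h}$; the constant becomes $O(C_F\sqrt k)$ (the paper uses $18C_F\sqrt k$), which doesn't change anything but should be stated. (3) Your tail bound $\P(\bar T_j>\E T+s_j)\le\exp(-s_j^2K_1^{j_0-j}/(2k))$ does not follow from Lemma~\ref{concentration} alone; you need the moment generating function bound of Lemma~\ref{expmean}, and the $3\sqrt{2\pi k a^2}$ prefactor there becomes $(3\sqrt{2\pi}\,s_j/\sqrt k)^{K_1^{j_0-j}}$ after the Chernoff step, which must be absorbed into a slightly larger $s_j$ (this is exactly why $s_j\gtrsim\sqrt k$ is forced, but it should be made explicit). (4) The $j=0$ level has no block structure and a different entropy count (Proposition~\ref{jkblocks} gives $2^{(m-j_0)k}(8q+5)^{K_1^{j_0-1}K_2}$ possible outputs, not $2^{(m+1-j_0)k+6K_1^{j_0}}$); the paper treats it separately and so should you, though the argument is in fact easier there since $\E T_{0,h}=0$. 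Finally, the upper half of your $\mathcal G_0$ (and the appeal to Lemma~\ref{meanofmax}) is unused: only the one-sided bound $\min_z\varphi_{N,z}\ge-c\log N$ enters the arithmetic, and a direct union bound suffices as in the paper.
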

 \begin{lemma} \label{Lemma.Ygood}
$\P (|\hat Q| \ge (1 - \frac \d {6+8\d}) |Q| \mbox{ for all possible outputs } Q )  \ge 1 - e^{-n/10} $ for $n$ large enough.
 \end{lemma}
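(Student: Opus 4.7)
The plan is to combine Proposition~\ref{blocknest}(a) with the $q$-dependence of the Bernoulli process. Part (a) of Proposition~\ref{blocknest} guarantees that any two distinct points of an output $Q$ are at $\ell_\infty$-distance strictly greater than $q$; consequently, the variables $\{\xi_{N,w} : w \in Q\}$ are mutually independent, each equal to $1$ with probability at least $p$. Thus, for a \emph{fixed} admissible output $Q$ of cardinality $L := |Q| = K_1^{j_0-1} K_2$, the quantity $|Q| - |\hat Q|$ is stochastically dominated by a sum of $L$ independent Bernoulli$(1-p)$ variables. Setting $\eta := \delta/(6+8\delta)$, a routine binomial tail bound (bounding the binomial coefficient via $\binom{L}{\lceil \eta L \rceil}\le (e/\eta)^{\eta L}$) gives
$$
\P\bigl(|\hat Q| < (1-\eta) L\bigr) \le \Bigl(\frac{e(1-p)}{\eta}\Bigr)^{\eta L}.
$$

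Next, I would apply a union bound over all possible outputs, using Proposition~\ref{jkblocks}, which provides the upper bound $2^{(m-j_0)k}(8q+5)^L$ on the number of such $Q$'s. This yields
$$
\P\bigl(\exists\, Q : |\hat Q| < (1-\eta)L\bigr) \le 2^{(m-j_0)k}\Bigl[(8q+5)\Bigl(\tfrac{e(1-p)}{\eta}\Bigr)^{\eta}\Bigr]^{L}.
$$
We then choose $p_0 = p_0(\delta,q) \in (0,1)$ sufficiently close to $1$ so that $(8q+5)\bigl(e(1-p)/\eta\bigr)^{\eta} \le 1/2$ whenever $p \ge p_0$. Since $L = K_1^{j_0-1} K_2$ grows like a positive power of $N$ (roughly $N^{1-\delta^2/2 - o(1)}$), while $(m-j_0)k \le \delta^2 n/2 + k$, the resulting bound is at most $2^{\delta^2 n/2 + k - L}$, which is far smaller than $e^{-n/10}$ for $n$ large enough.

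The main obstacle---really the only delicate point---is calibrating $p_0$ so that the combinatorial factor $(8q+5)^L$ from the number of possible outputs is dominated by the per-$Q$ tail estimate; we need the per-site gain $(e(1-p)/\eta)^\eta$ to more than offset the per-site entropy cost $8q+5$, which is possible only for $p$ close enough to $1$ depending on both $\delta$ (through $\eta$) and $q$. The whole point of the $k$-block-nest construction is precisely to produce a set $Q$ whose cardinality $L$ is a positive power of $N$ while simultaneously keeping the number of possible outputs sub-exponential in $L$; once those two properties are in hand, the argument above is essentially automatic.
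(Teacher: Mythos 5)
Your proposal is correct and follows essentially the same route as the paper: independence of $\{\xi_{N,w}:w\in Q\}$ from Proposition~\ref{blocknest}(a), a Chernoff-type tail bound for the count of closed sites in a fixed output $Q$, a union bound over possible outputs via Proposition~\ref{jkblocks}, and a choice of $p_0(\delta,q)$ close enough to $1$ so that the per-site tail gain beats the per-site entropy factor $8q+5$. The only cosmetic difference is the specific tail inequality (you use the elementary estimate $\binom{L}{j}(1-p)^j\le (e(1-p)/\eta)^{\eta L}$, whereas the paper tilts exponentially at the optimal rate and obtains the relative-entropy exponent $\rho$); both are standard, blow up as $p\to 1$, and yield the same conclusion.
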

\noindent Assuming these, we obtain that
 $$
\P ( |\tilde Q \cap \hat Q| \ge \frac \d {6 + 8 \d} |Q| \mbox{ for all possible outputs } Q ) \ge 1 - 6 e^{-n/10} .
 $$
Thus, in order to prove Theorem~\ref{cardinality}, it suffices to check that $\frac \d {6 + 8 \d} |Q| \ge N^{1-\d} $. By Proposition~\ref{blocknest}, $|Q|=K_1^{j_0 - 1} K_2 \ge K_1^{j_0 - 1}$, where $K_1 = 2^{k-2}$ and $j_0 = \lfloor (1 - \frac 1 2 \d^2) m \rfloor$. This implies that
 $$
\frac {N^{1-\d}}{|Q|} \le 2^{(1-\d)k(m+1) - (k-2) \big( (1 - \frac {\d^2} 2) m - 2 \big) }  = 2^{k(1-\d)+ 2(k-2) } \( 2^{- (\d - \frac {\d^2} 2)k + 2 - \d^2 } \) ^m.
 $$
Take $k_0 : = k_0 (\d, q) \ge k^\prime(q) \vee \tilde k (\d, q)$ such that $(\d - \frac {\d^2} 2) k_0 - 2  + \d^2 > 0$. Then for all $k \ge k_0$ and $p \ge p_0$, we have $\frac {N^{1-\d}} {|Q|} \le \frac \d {6 + 8 \d } $ provided that $n$ is sufficiently large. This completes the proof of Theorem~\ref{cardinality}.

It remains to provide proofs for Lemmas~\ref{Lemma.Xgood} and \ref{Lemma.Ygood}.
\begin{proof}[Proof of Lemma~\ref{Lemma.Xgood}] It is clear from a simple union bound that
$$
\P \( \varphi_{N,z} \ge -  3 \log N, \ \ \forall z \in V_N \) \ge 1 - 2 e^{- n/10}.
$$
Together with Lemma \ref{d2mtom}, we can suppose without loss of generality that $\varphi_{N,w} \ge - 3 \log N$ and $\theta_w = \sum_{j= \lceil (1-\d^2) m \rceil}^{m-1} \psi_{j,w} \le \frac 5 2 \d \log N$ for all $w \in V_N$ (holding with probability $\ge 1 - 4 e^{-n/10}$).  Thus, we have
  \begin{eqnarray*}
\frac 1 {|Q|} \sum_{w \in Q} \varphi_{N,w}
 & \ge &
\frac {|\tilde Q|} {|Q|}  (- 3 \log N) + \frac {|Q| - |\tilde Q|} {|Q|} \times 4 \d \log N =
\( 4 \d - (3 + 4 \d)  \tfrac {|\tilde Q|}{|Q|} \) \log N .
 \end{eqnarray*}
Suppose $|\tilde Q| \le \frac {\d}{3 + 4\d} |Q|$, i.e., $(3+4\d) \frac {|\tilde Q|}{|Q|} \le \d$. Then
 $$
\frac 1 {|Q|} \sum_{w \in Q} \varphi_{N,w} \ge 3 \d \log N .
 $$
This, together with $\frac 1 {|Q|} \sum_{w \in Q} \theta_w \le \max_{z \in V_N} \theta_z \le \frac 5 2 \d \log N$, yields that (recall \eqref{eq-def-psi})
 $$
\frac 1 {|Q|} \sum_{w \in Q}   \sum_{j=0}^{\lceil (1 - \d^2) m \rceil - 1} \psi_{j,w}  \ge \frac 1 2 \d \log N \ge \frac 1 2 \d k \log 2 \times m .
 $$
Hence, there exists $0 \le j \le \lceil (1 - \d^2) m \rceil - 1$ such that
\begin{equation}\label{eq-to-check}
\frac 1 {|Q|} \sum_{w \in Q} \psi_{j,w}  \ge \frac 1 2 \d k \log 2 .
 \end{equation}
It remains to check that the probability for \eqref{eq-to-check} to hold for some output $Q$ is at most $e^{-n/10}$.

For $j \ge 1$, let $\{ B_{j,h} : h=1, \cdots, K_1^{j_0-j} \}$ be the family of $j$-blocks of $P$. By Proposition \ref{blocknest}, $Q$ can be decomposed into a disjoint union of $Q \cap B^*_{j,h}$'s, each of which has the same cardinality. Then, \eqref{eq-to-check} implies that
 $$
\frac 1 {K_1^{j_0-j}} \sum_{h=1}^{K_1^{j_0-j}} T_{j,h} \ge \frac 1 2 \d k \log 2 ,
 $$
where $T_{j,h} : = \max_{z \in B^*_{j,h}} \psi_{j,z}$.

We deal with a specific family of $T_{j,h}$'s first. They are independent and identically distributed, since $d(B^*_{j,h}, B^*_{j,h^\prime} ) > 2^{jk}$ by Proposition~\ref{blocknest}. It follows that for all  $a > 0$,
 $$
\P \( \tfrac 1 {K_1^{j_0-j}} \mbox{$\sum_{h=1}^{K_1^{j_0-j}}$}  T_{j,h} \ge \frac 1 2 \d k \log 2  \) \le \E \exp \( a  \mbox{$\sum_{h=1}^{K_1^{j_0-j}}$} ( T_{j,h}  - \frac 1 2 \d k \log 2 ) \) = \( \E e^{a(T  - \frac 1 2 \d k \log 2) } \) ^{K_1^{j_0-j}},
 $$
where $T = T_{j,1}$. By Lemma \ref{Emax}, $\E T \le 4 C_F \sqrt k$.  Combined with Lemma \ref{expmean}, it follows that for all $a > 1/\sqrt{2 \pi k}$,
  \begin{eqnarray*}
\E e^{a ( T  - \frac 1 2 \d k \log 2 )}
 & = &
\E e^{a ( T - \E T  )} \times e^{ a ( \E T - \frac 1 2 \d k \log 2) }
 \\ & \le &
3 \sqrt{2 \pi ka^2} e^{\frac 1 2 ka^2 } \times e^{a( 4 C_F \sqrt k - \frac 1 2 \d k \log 2) }
 \\ & = &
3 \sqrt{2 \pi ka^2} e^{\frac 1 2 ka (8 C_F / \sqrt k + a - \d \log 2 ) }.
 \end{eqnarray*}
Especially, we aim to set $a = \frac 1 2 \d \log 2$. There exists $k_2 : = k_2 (\d, q) \ge k^\prime(q)$ such that $a : = \frac 1 2 \d \log 2 > 1 / \sqrt{2 \pi k}$ and $8 C_F / \sqrt k + a - \d \log 2  \le - \frac 1 4 \d \log 2 $ for all $k \ge k_2$. It follows that
 \begin{eqnarray*}
\E e^{a ( T  - \frac 1 2 \d k \log 2 )}
 & \le &
3 \sqrt{2 \pi k} \times \frac 1 2 \d \log 2 \times e^{ -\frac 1 2 k \times \frac 1 2 \d \log 2 \times \frac 1 4 \d \log 2}
 \\ & = &
\frac 3 2 \sqrt{2 \pi k} \d \log 2 \times e^{- (\frac 1 4 \d \log 2)^2 k} .
 \end{eqnarray*}
Take $k_3 : = k_3 (\d, q) \ge k_2$ such that the right hand side above is less than $e^{- \frac 1 {36} \d^2 k} $ for all $k \ge  k_3$. Consequently,
 $$
\P \( \tfrac 1 {K_1^{j_0-j}}  \mbox{$\sum_{h=1}^{K_1^{j_0-j}}$}  T_{j,h} \ge \frac 1 2 \d k \log 2  \)  \le e^{- \frac 1 {36} \d^2 k K_1^{j_0-j} }
 $$
for any specific family of $T_{j,h}$'s. By Proposition~\ref{jkblocks}, there are at most $2^{2(m+1-j_0) k + 6 K_1^{j_0 - j}}$ possible families of $T_{j,h}$'s. Therefore, for each fixed $j$,
  \begin{align*}
\P \( \exists Q \mbox{ such that } \tfrac 1 {K_1^{j_0-j}} \mbox{$\sum_{h=1}^{K_1^{j_0-j}}$} T_{j,h}  \ge \frac 1 2 \d k \log 2 \)
&\leq
2^{2(m+1-j_0) k + 6 K_1^{j_0 - j}} e^{- \frac 1 {36} \d^2 k K_1^{j_0-j} } \\
&\leq
2^{2(m+1-j_0) k} \( 2^6  e^{- \frac 1 {36} \d^2 k  } \) ^{K_1^{j_0-j}}.
 \end{align*}
It follows that
 \begin{eqnarray*}
 & &
\P \( \exists Q \mbox{ such that }  \tfrac 1 {K_1^{j_0-j}} \mbox{$ \sum_{h=1}^{K_1^{j_0-j}}$} T_{j,h}  \ge \frac 1 2 \d k \log 2 \mbox{ for some } j = 1, \ldots, \lceil (1 - \d^2) m \rceil - 1  \)
 \\ & \le &
\sum_{j = 1}^{\lceil (1 - \d^2) m \rceil - 1} 2^{2(m+1-j_0) k} \( 2^6  e^{- \frac 1 {36} \d^2 k  } \) ^{K_1^{j_0-j}}
  \\ & = &
2^{2(m+1-j_0) k} \sum_{j = j_0 -\lceil (1 - \d^2) m \rceil + 1}^{j_0 - 1} \( 2^6 e^{- \frac 1 {36} \d^2 k} \) ^{K_1^{j}}
 \\ & \le &
2^{2(m+1-j_0) k} \( 2^6 e^{- \frac 1 {36} \d^2 k} \) ^{K_1^{j_0 - \lceil (1 - \d^2) m \rceil + 1}} / ( 1 - 2^6 e^{- \frac 1 {36} \d^2 k} )  .
 \end{eqnarray*}
Recall $j_0 = \lfloor (1 - \frac 1 2 \d^2) m \rfloor$, which implies $j_0 - \lceil (1 - \d^2) m \rceil + 1 \ge \frac 1 2 \d^2 m - 1$. Thus $K_1^{j_0 - \lceil (1 - \d^2) m \rceil + 1 } \ge 2^{\frac 1 4 \d^2 n}$. Therefore, the right hand side above is less than  $\frac 1 2 e^{-n/10 }$  provided that $k\geq k_4 : = k_4 (\d, q) (\ge k_3)$ and $n$ is large enough.

For $j=0$, we denote $\{ \psi_{0,w}, w \in Q \}$ by $\{ T_{0,h}, h = 1, \cdots, |Q| \}$. By a) of Proposition~\ref{blocknest}, they are independent and distributed as $N(0,k)$. Then, a union bound combined with Proposition~\ref{jkblocks} and Lemma~\ref{concentration} implies that
 \begin{eqnarray*}
\P \( \exists Q \mbox{ such that } \frac 1 {|Q|} \sum_{w \in Q} \psi_{0,w}
\ge \frac 1 2 \d k \log 2 \)
  & \le &
2^{2(m-j_0) k} (8q+5)^{K_1^{j_0 - 1} K_2} 2 e^{-\frac 1 8 k (\d \log 2)^2 K_1^{j_0-1}K_2}
 \\ & \le &
2^{2(m-j_0) k + 1} \(  (8q+5) e^{-\frac 1 8 k (\d \log 2)^2  } \) ^{ K_1^{j_0-1} K_2} .
 \end{eqnarray*}
Note $K_2 = \lfloor \frac {2^k} {q + 1} \rfloor$. The right hand side above is less than $\frac 1 2 e^{-n/10 }$ provided $k \ge \tilde k (\d, q) (\ge k_4)$. 

Therefore, the event in \eqref{eq-to-check} happens for some output $Q$ with probability at most $e^{-n/10}$, completing the proof.
\end{proof}

 \begin{proof}[Proof of Lemma~\ref{Lemma.Ygood}]
Investigate a specific output $Q$ first. By Proposition~\ref{blocknest}, $\xi_{N,w}$, $w \in Q$ are independent. Denote $\tilde p = 1 - \frac {\d}{6 + 8 \d} $. For $p > \tilde p$, by Chebyshev's inequality, $\forall a > 0$,
 $$
\P ( |\hat Q| \le \tilde p |Q| ) \le \prod_{w \in Q} \E  e^{a (\tilde p - \xi_{N,w})}  \le \( \E e^{a (\tilde p - \xi)} \) ^{|Q|} ,
 $$
where $\P (\xi = 1 ) = 1 - \P (\xi = 0) = p$.
Take $a$ such that $e^a = \frac { p (1 - \tilde p ) }{\tilde p (1-p)}$. Then $\E e^{a (\tilde p -\xi)} = e^{-\rho}$, where
 $$
\rho = \rho (\d) : = f (\tilde p) -  f (p) + (p - \tilde p) f^\prime (p) > 0,
 $$
and $f(x) := x \log x + (1 - x) \log (1-x)$. Note there are at most $2^{2(m-j_0) k} (8q+5)^{K_1^{j_0 - 1} K_2}$ possible outputs. Therefore,
 \begin{eqnarray*}
 & &
\P ( |\hat Q| \le  \tilde p |Q| \mbox{ for some output } Q )  \le e^{- \rho K_1^{j_0-1} K_2 } \times 2^{2(m-j_0) k} (8q+5)^{K_1^{j_0 - 1} K_2}
 \\ & \le &
2^{2(m-j_0)k} \( e^{-\rho} (8q+5) \) ^{K_1^{j_0 - 1} K_2} .
 \end{eqnarray*}
Note $\rho$ increases to $\infty$ as $p$ increases to $1$, i.e. $e^\rho > 8q+5$ for $p$ being larger than some $p_0 (\d, q) \in (\tilde p, 1)$. Consequently, the right hand side above is less than $e^{- n /10}$ for $n$ large enough.
\end{proof}

\section{Proof of Theorem \ref{maintheorem}} \label{pfofthm}

In Section~\ref{S.largedist}, we show that with high probability a typical pair of vertices sampled from $\mu_\gamma \times \mu_\gamma$ has $\ell_\infty$-distance at least $N^{1-\k}$. Therefore, the lower bound on their FPP distances follows readily from Theorem~\ref{cardinality}. Much of the work goes into the proof of the upper bound, for which we need to convert boundary-to-boundary crossings to vertex-to-vertex crossings. The proof technique is a Russo-Seymour-Welsh kind of construction \cite{R,SW}.

\subsection{Macroscopic $\ell_\infty$-distance for a typical pair} \label{S.largedist}

The goal of this section is to prove the following statement.
\begin{prop} \label{shortdist}
For any fixed $\kappa > 0$, there exists $\rho > 0$ such that
 $$
\lim_{ N\to \infty} \P \( \mu_\gamma\times \mu_\gamma ( \big\{ (u, v): |u-v| < N^{1-\k } \big\} ) > N^{-\rho}  \) = 0 .
 $$
 \end{prop}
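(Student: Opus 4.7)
Write $Z_\gamma := \sum_{w \in V_N} e^{\gamma \varphi_{N,w}}$, $\mathrm{Num} := \sum_{u,v \in V_N,\, |u-v|<N^{1-\k}} e^{\gamma(\varphi_{N,u}+\varphi_{N,v})}$, and $R := \mathrm{Num}/Z_\gamma^2$; the event of interest is $\{R > N^{-\rho}\}$. Since $R\le 1$ always, by Markov's inequality it suffices to produce $\rho'>0$ with $\E[R]\le N^{-2\rho'}$ and then set $\rho=\rho'$. The plan is to estimate $\E[R]$ by splitting on the ``good'' event $G := \{Z_\gamma \ge N^{-\tau}\E[Z_\gamma]\}$ for a small $\tau>0$: on $G$, bound $R$ above by $N^{2\tau}\mathrm{Num}/(\E[Z_\gamma])^2$ and use a first-moment bound on $\mathrm{Num}$; on $G^c$, use $R\le 1$ and a concentration estimate for $Z_\gamma$.

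\textbf{First-moment step.} By Lemma~\ref{covariance}, $\Var(\varphi_{N,u}+\varphi_{N,v}) = 4n - 2\log_2(|u-v|\vee 1) + O(k)$, so
$$\E\bigl[e^{\gamma(\varphi_{N,u}+\varphi_{N,v})}\bigr] \le C_k\, N^{2\gamma^2/\log 2}\,(|u-v|\vee 1)^{-\gamma^2/\log 2}.$$
Since there are $O(N^2 r)$ pairs at $\ell_\infty$-distance $r$, and $\gamma<1/2$ makes $\gamma^2/\log 2<2$ (so the geometric sum is dominated by the largest $r$),
$$\E[\mathrm{Num}] \le C_k\, N^{4+\gamma^2/\log 2 - c_0\k}, \qquad c_0 := 2 - \gamma^2/\log 2 > 0.$$
The same lemma taken at $u=v$ gives $(\E[Z_\gamma])^2 \asymp N^{4+\gamma^2/\log 2}$, whence $\E[\mathrm{Num}]/(\E[Z_\gamma])^2 \le CN^{-c_0\k}$ and therefore $\E[R\,\mathbf{1}_G] \le CN^{-c_0\k + 2\tau}$.

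\textbf{Lower bound on $Z_\gamma$.} Summing the same covariance estimate over all pairs in $V_N\times V_N$ yields $\E[Z_\gamma^2]\le C_k(\E[Z_\gamma])^2$ (the $L^2$ regime, again from $\gamma^2/\log 2<2$). Paley--Zygmund then gives only $\P(Z_\gamma\ge c\,\E[Z_\gamma])\ge \delta>0$, a constant-order bound. To upgrade to the polynomially small failure probability $\P(G^c)\le N^{-q\tau}$ that we need, the plan is to exploit the multiscale independence of the $K$-coarse MBRW: fix $M=N^\eta$, partition $V_N$ into $M^2$ sub-boxes of side $N/M$, and condition on the ``coarse'' variables $\{b_{jk,B}:\, 2^{jk}\ge N/M\}$. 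Conditionally, the sub-box partition functions $\{Z_{\gamma,i}\}_{i=1}^{M^2}$ are independent, each subject to the same in-sub-box Paley--Zygmund estimate; a Chernoff bound shows that $Z_{\gamma,i}\ge c\,\E[Z_{\gamma,i}\mid\mathrm{coarse}]$ for at least a fraction $\delta/2$ of the sub-boxes except on a set of conditional probability $\le e^{-cM^2}$, while Gaussian concentration for the coarse sum controls the outer expectation. Choosing $M=N^\eta$ with small $\eta>0$ gives $\P(G^c)\le N^{-q\tau}$ for every $q$. Alternatively one may invoke negative-moment bounds for subcritical Gaussian multiplicative chaos, which are standard and transfer to the MBRW setting.

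\textbf{Combination and main obstacle.} Combining the two steps,
$$\E[R]\le \E[R\,\mathbf{1}_G] + \P(G^c) \le CN^{-c_0\k + 2\tau} + N^{-q\tau},$$
so choosing $\tau$ small as a function of $c_0$, $\k$ and $q$ yields $\E[R]\le N^{-2\rho'}$ for some $\rho'>0$, and Markov closes the argument. The main technical obstacle is the high-probability lower bound on $Z_\gamma$: the $L^2$ bound delivered by Lemma~\ref{covariance} gives only constant-order Paley--Zygmund concentration, so upgrading this to a polynomial tail $\P(Z_\gamma\ll \E[Z_\gamma])\le N^{-q\tau}$ is the delicate step where the MBRW's scale-by-scale independence (or subcritical GMC negative-moment theory) must be invoked.
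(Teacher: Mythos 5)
Your high-level plan matches the paper's: write the $\mu_\gamma\times\mu_\gamma$-mass as $\mathrm{Num}/Z_\gamma^2$, bound $\mathrm{Num}$ with a first-moment computation via Lemma~\ref{covariance}, and lower bound $Z_\gamma$. The first-moment step is correct and essentially identical to the paper's: the estimates $\Var(\varphi_{N,u}+\varphi_{N,v})=4n-2\log_2(|u-v|\vee 1)+O(k)$ and $\E[\mathrm{Num}]\le C_k N^{4+\gamma^2/\log 2 - (2-\gamma^2/\log 2)\k}$ are both right, and $\gamma<1/2<\sqrt{2\log 2}$ indeed puts you in the $L^2$ regime.

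The real divergence is in how you handle the lower bound on $Z_\gamma$, and here there is a meaningful gap. The paper's Lemma~\ref{denominator} does not go through $L^2$/Paley--Zygmund at all: it invokes the level-set cardinality theorem \cite[Theorem 1.6]{CDD13} for \emph{extremal} fields (established via Lemma~\ref{meanofmax}) to pick out $N^{2(1-r^2/R^2-1/R^2)}$ sites with $\varphi_{N,w}\ge (r/R)\E\max\varphi$, giving $Z_\gamma\ge N^2 e^{\gamma^2 n/2 - \tau n}$ with probability $\to 1$ by a direct sum over the level set. Because the paper splits via the union bound $\P(R>N^{-\rho})\le\P(G^c)+\P(\mathrm{Num}>\cdot)$, it only needs $\P(G^c)\to 0$, not a polynomial rate. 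Your choice to argue through $\E[R]=\E[R\1_G]+\E[R\1_{G^c}]$ is what forces the harder requirement $\P(G^c)\le N^{-q\tau}$, which is the source of the ``main obstacle'' you flag; switching to the union bound removes it. Even so, bare Paley--Zygmund gives only $\P(Z_\gamma\ge c\E Z_\gamma)\ge\delta>0$, not $\to 1$, so your observation that something extra is needed is correct.

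Your multiscale sketch for that extra step is plausible in spirit but has two unfilled holes. First, after conditioning on $\{b_{jk,B}:2^{jk}\ge N/M\}$ the fine fields in adjacent sub-boxes are \emph{not} independent (scales $2^{jk}<N/M$ couple neighboring sub-boxes), so you must sub-sample a $2^{jk}$-separated family before invoking Chernoff; this is repairable. Second, ``Gaussian concentration for the coarse sum controls the outer expectation'' hides a circularity: $\sum_i\E[Z_{\gamma,i}\mid\text{coarse}]$ is itself a partition function over the coarse field, which a priori needs its own high-probability lower bound. This can be broken by a cruder bound (e.g.\ $\min_w\varphi^{\text{coarse}}(w)\ge -C\sqrt{\eta}\,n$ with very high probability since the coarse variance is only $\eta n$, and then tune $\eta\lesssim\tau^2$), but as written the step is not closed. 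The alternative you mention---negative moments for subcritical GMC---is a valid shortcut and would deliver $\P(Z_\gamma<N^{-\tau}\E Z_\gamma)\le C_s N^{-s\tau}$, but you only allude to it. In short: the strategy is right and the numerator computation matches the paper; the denominator lower bound is where the paper reaches for a ready-made extremality/level-set tool while your proposal sketches a heavier multiscale argument that still has gaps to fill.
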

 By definition, we have
 \begin{equation}\label{eq-ell-1-distance}
\mu_\gamma\times \mu_\gamma ( \{(u, v): |u-v| < N^{1-\k} \} ) = \frac {\sum_{|u-v| < N^{1-\k} } e^{\gamma (\varphi_{N,u} + \varphi_{N,v})}}{\( \sum_w e^{\gamma \varphi_{N,w}} \) ^2}\,.
 \end{equation}
Thus, in order to prove Proposition~\ref{shortdist}, we need to control both the numerator and the denominator in \eqref{eq-ell-1-distance}. We start with the denominator.

 \begin{lemma} \label{denominator}
For any $\tau > 0$,
 $$
\lim_{N \to \infty} \P \( \mbox{$ \sum_{w \in V_N} $} e^{\gamma \varphi_{N, w}}  \ge N^2 e^{\frac 1 2 \gamma^2 n} e^{- \tau n} \) = 1 .
 $$
 \end{lemma}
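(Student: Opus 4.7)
The approach is to combine a standard second-moment / Paley--Zygmund estimate for $S := \sum_{w \in V_N} e^{\gamma \varphi_{N,w}}$ with Gaussian concentration for $\log S$, viewed as a Lipschitz function of the underlying independent Gaussians $\{b_{jk,B}\}$. Under the hypothesis $\gamma < 1/2$ of Theorem~\ref{maintheorem}, we have $\gamma^2 < 2\log 2$, so we sit comfortably inside the $L^2$-phase of multiplicative chaos and $S$ should concentrate at sub-exponential scale around its mean.

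First I would compute moments. Since $\var(\varphi_{N,w}) = km \geq n-k$, one gets $\E S \geq N^2 e^{\gamma^2(n-k)/2}$. Using Lemma~\ref{covariance},
\[
\E S^2 = \sum_{u,v} e^{\gamma^2(\var \varphi_u + \var \varphi_v)/2 + \gamma^2 \sigma_{u,v}} \leq C_k\, e^{2\gamma^2 n} \sum_{u,v \in V_N} (|u-v| \vee 1)^{-\gamma^2/\log 2}.
\]
Since $\gamma^2/\log 2 < 1/(4\log 2) < 2$, a direct lattice sum gives $\Theta(N^{4-\gamma^2/\log 2}) = \Theta(N^4 e^{-\gamma^2 n})$, hence $\E S^2 \leq C'_k (\E S)^2$ and Paley--Zygmund yields $\P(S \geq \E S/2) \geq c_0 > 0$, a constant-probability lower bound.

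The second step upgrades this to probability tending to $1$ via Gaussian concentration. Parameterize $b_{jk,B} = 2^{-jk} g_{jk,B}$ with $g_{jk,B}$ iid standard Gaussians; then
\[
\frac{\partial \log S}{\partial g_{jk,B}} = 2^{-jk} \gamma \sqrt k \cdot R_{j,B}, \qquad R_{j,B} := \frac{1}{S}\sum_{w \in B} e^{\gamma \varphi_{N,w}} \in [0,1].
\]
At each level $j$, every $w \in V_N$ lies in exactly $2^{2jk}$ boxes of $\mathcal B_{jk}$, so $\sum_{B \in \mathcal B_{jk}} R_{j,B} = 2^{2jk}$, whence $\sum_B R_{j,B}^2 \leq (\max_B R_{j,B})\cdot \sum_B R_{j,B} \leq 2^{2jk}$. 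Summing over $j$ yields the key Lipschitz bound $\|\nabla_g \log S\|_2^2 \leq \gamma^2 km \leq \gamma^2 n$. Borell's inequality then gives $\P(|\log S - \E \log S| > t) \leq 2 e^{-t^2/(2\gamma^2 n)}$. The Paley--Zygmund estimate forces some quantile of $\log S$ to exceed $\log(\E S/2)$, and together with the concentration bound this forces $\E \log S \geq \log(\E S/2) - O(\sqrt n)$; one more application of concentration at scale $\tau n/2$ then yields $\log S \geq 2\log N + \gamma^2 n/2 - \tau n$ with probability tending to $1$, as required.

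I do not anticipate serious obstacles: the second-moment calculation is routine $L^2$ chaos and the concentration step is a textbook application of Borell. The only ingredient with any content is the Lipschitz bound, which rests on the clean observation that $\{R_{j,B}\}_{B \in \mathcal B_{jk}}$ is a non-negative array bounded by $1$ with total mass $2^{2jk}$, giving exactly the $\ell^2$-bound $\sum_B R_{j,B}^2 \leq 2^{2jk}$ needed to cancel the $2^{-2jk}$ coming from $\var b_{jk,B}$.
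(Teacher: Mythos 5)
Your proof is correct, and it takes a genuinely different route from the paper's. The paper invokes \cite[Theorem 1.6]{CDD13} as a black box: after verifying (via Lemma~\ref{meanofmax}) that $\{\varphi_{N,w}\}$ is an \emph{extremal} log-correlated field, it concludes that the level set $\{w : \varphi_{N,w} \ge \lambda\, \E\max\}$ has cardinality at least $N^{2(1-\lambda^2)-o(1)}$ with probability tending to $1$, picks $\lambda$ close to $\gamma/(2\sqrt{\log 2})$, and bounds the sum from below by its restriction to this level set. Your argument instead establishes the constant-probability bound via a second-moment/Paley--Zygmund calculation (which is exact here because $\gamma^2/\log 2 < 2$, the $L^2$-chaos regime), and then upgrades to probability $1-o(1)$ by the Lipschitz concentration of $\log S$ in the underlying i.i.d.\ Gaussians; the key point, correctly identified, is that $\sum_{B\in\mathcal B_{jk}} R_{j,B} = 2^{2jk}$ with each $R_{j,B}\in[0,1]$, giving the uniform gradient bound $\|\nabla_g\log S\|_2^2 \le \gamma^2 km \le \gamma^2 n$. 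Your route is more self-contained (it avoids citing the level-set theorem) but is limited to the $L^2$-phase $\gamma < \sqrt{2\log 2}$, whereas the paper's route via \cite{CDD13} works throughout the full subcritical regime $\gamma < 2\sqrt{\log 2}$; for this paper's purposes $\gamma < 1/2$ and both hypotheses are comfortably satisfied. One small bookkeeping point worth making explicit when writing this up: $\var\varphi_{N,w} = km$ exactly (every lattice point lies in exactly $2^{2jk}$ boxes of $\mathcal B_{jk}$, and the boxes are not truncated at $\partial V_N$), and $n-k < km \le n$, so the discrepancy $e^{\gamma^2(km-n)/2}$ between $\E S$ and $N^2 e^{\gamma^2 n/2}$ is a constant depending only on $k$ and is absorbed into $e^{-\tau n}$.
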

\begin{proof}
The statement corresponds to an estimate on the cardinalities for the level sets of the Gaussian field, which has been well-understood for log-correlated Gaussian fields. For instance, the dimension of level sets for the Gaussian free field was computed in \cite{Daviaud}. While the technique in \cite{Daviaud} extends to our $K$-coarse MBRW, we choose to apply a more handy theorem from \cite{CDD13}.

Take $R >  2 \vee \sqrt {\frac 4 \tau \log 2}$. Since $\gamma < 2 \sqrt{\log 2}$, one can take $1 \le r \le R-1$ such that $| \frac r R - \frac {\gamma } {2 \sqrt{ \log 2}} | < \frac 1 R$. By Lemma \ref{meanofmax},  we have  $\E \max_{z \in V_N} \varphi_{N,z} \ge 2 \sqrt{\log 2} n - \frac 1 { \sqrt{\log 2}} \log n$, which implies that $\{ \varphi_{N,w} : w \in V_N \}$ is an \emph{extremal} (using the notion of \cite{CDD13}) field. Thus, by \cite[Theorem 1.6]{CDD13}, with  probability tending to 1 as $N \to \infty$ the level set
 $$
U_{N, \frac r R} : = \{ w \in V_N : \varphi_{N,w} \ge \frac r R \E \max_{z \in V_N} \varphi_{N,z} \}
 $$
has cardinality at least $N^{2 (1 -  \frac {r^2}{R^2} - \frac 1 {R^2} )}$. On this event, it then follows that
 \begin{eqnarray*}
\sum_{w \in V_N}  e^{\gamma \varphi_{N, w}}
  & \ge &
\sum_{w \in U_{N, \frac r R}}  e^{\gamma \varphi_{N, w}} \ge   N^{2 (1 - \frac {r^2}{R^2} - \frac 1 {R^2})}  e^{\gamma \frac {r}{R} ( 2 \sqrt{\log 2} n - \frac 1 {\sqrt{\log 2}} \log n ) }
  \\ & \ge &
n^{- \frac 1 {\sqrt{\log 2}} \gamma} N^{2 (1 - \frac 1 {R^2})} e^{ n ( \frac {r}{R} 2\gamma \sqrt{\log 2} -  \frac {r^2}{R^2} 2 \log 2) }
 \\ & \ge &
n^{- \frac 1 {\sqrt{\log 2}} \gamma} N^{2 (1 - \frac 1 {R^2})} e^{- \frac {2 \log 2}{R^2} n } e^{\frac 1 2 \gamma^2 n}\,,
 \end{eqnarray*}
where in the last inequality we use
 $$
\frac r R 2 \gamma \sqrt{\log 2} - \frac {r^2} {R^2} 2 \log 2 =  -2 \log 2 ( \frac r R- \frac {\gamma}{2 \sqrt{\log 2}} )^2 + \frac {1}{2} \gamma^2 \ge -  \frac {2 \log 2 } {R^2}  + \frac 1 2 \gamma^2 \,.
 $$
Recalling that $\tau > \frac 4 {R^2} \log 2$, we obtain that $\sum_{w \in V_N}  e^{\gamma \varphi_{N, w}} \geq N^2 e^{\frac 1 2 \gamma^2 n } e^{- \tau n}$ as desired.
\end{proof}
\begin{proof}[Proof of Proposition~\ref{shortdist}]
In light of \eqref{eq-ell-1-distance} and Lemma~\ref{denominator}, it remains to bound the numerator in \eqref{eq-ell-1-distance} from above. For this purpose, we apply a straightforward first moment calculation. Note that
 $$
\E e^{\gamma (\varphi_{N,u} + \varphi_{N,v})} = e^{\frac 1 2 \gamma^2 \Var (\varphi_{N,u} + \varphi_{N,v})} = e^{\gamma^2 (km + \E \varphi_{N,u} \varphi_{N,v})}\,.
 $$
By Lemma \ref{covariance}, we have that $\E \varphi_{N,u} \varphi_{N,v} \le n - \log_2 (|u-v| \vee 1)$.  Consequently,
 \begin{equation} \label{expmoment}
\E e^{\gamma (\varphi_{N, u} + \varphi_{N, v})} \le e^{(2n   - \log_2 (|u-v|\vee 1) )\gamma^2 } \le e^{2n \gamma^2} e^{- \gamma^2 \lfloor \log_2 (|u-v| \vee 1) \rfloor } .
 \end{equation}
Grouping pairs of vertices in terms of a dyadic decomposition for their $\ell_\infty$-distance and summing over all possible groups, we get that
 $$
\E  \sum_{u\neq v, |u-v| < N^{1-\k }} e^{\gamma (\varphi_{N,u} + \varphi_{N,v})}
  \le
e^{2n \gamma^2} \sum_{r} e^{- \gamma^2 r} \left| \left\{ (u,v) : 2^r \le |u-v| < 2^{r+1}, |u-v| < N^{1-\k }  \right\} \right| .
 $$
Note that $| \{ (u,v) : 2^r \le |u-v| < 2^{r+1} \} |\leq 16 N^2 2^{2r}$, and $r < n (1-\k )$. Thus,
 \begin{eqnarray*}
 & &
\E  \sum_{u \neq v, |u-v| < N^{1-\k }} e^{\gamma (\varphi_{N,u} + \varphi_{N,v})}
 \\ & \le &
e^{2n  \gamma^2} 16  N^2  \sum_{r=0}^{ \lfloor n (1-\k ) \rfloor  } 2^{2r} e^{-\gamma^2 r}
 \le
16 e^{2n \gamma^2} N^2  \frac {(2^2 e^{-\gamma^2})^{\lfloor n (1-\k ) \rfloor  + 1 }  }{ 2^2 e^{-\gamma^2} - 1}
 \\ & \le &
\frac { 64 e^{- \gamma^2}   }{ 4 e^{-\gamma^2} - 1} e^{2n  \gamma^2} N^2  (2^2 e^{-\gamma^2})^{ n (1-\k ) }
 =
\frac {  64 e^{- \gamma^2} }{ 4 e^{-\gamma^2} - 1}N^{- 2 \k } e^{ \k  \gamma^2 n }  N^4 e^{ \gamma^2 n} ,
 \end{eqnarray*}
where $2^2 e^{-\gamma^2} > 1$ since $\gamma < \sqrt{\log 4}$. It follows that
 $$
\E  \sum_{|u-v| < N^{1-\k }} e^{\gamma (\varphi_{N,u} + \varphi_{N,v})}  \le N^2 e^{2 \gamma^2 n} + \frac {  64 e^{- \gamma^2} }{ 4 e^{-\gamma^2} - 1}N^{- 2 \k } e^{ \k  \gamma^2 n }  N^4 e^{ \gamma^2 n} .
 $$
Combined with Markov's inequality, it yields that for any fixed $\rho>0$,
 \begin{eqnarray*}
 &&
\P \( \mbox{$ \sum_{|u-v| < N^{1-\k }}$} e^{\gamma (\varphi_u + \varphi_v)}  > N^{-\rho}  e^{-2 \tau n} N^4 e^{\gamma^2 n}  \)
 \\ & \le &
\frac {N^{\rho}  e^{2 \tau n} \E \sum_{|u-v| < N^{1-\k }} e^{\gamma (\varphi_u + \varphi_v)} }{  N^4 e^{\gamma^2 n}  }
 \le
N^{\rho} e^{2 \tau n} \( \frac {e^{\gamma^2 n}}{N^2 }  + \frac {  64 e^{- \gamma^2} }{ 4 e^{-\gamma^2} - 1} \times  \( \frac { e^{\gamma^2 n } } {N^2} \) ^{\kappa} \) \,,
 \end{eqnarray*}
which tends to 0 as $n\to \infty$ provided that $\gamma < \sqrt{2 \log 2}$ and $\tau, \rho$ are small enough. Combined with Lemma~\ref{denominator}, this completes the proof of the proposition.
\end{proof}

\subsection{Lower bound for Theorem \ref{maintheorem}}

Set $p=1$ and $\d = \frac {\e} {1 + 4 \gamma} $. By Theorem \ref{cardinality} and symmetry, with probability at least $1 - 6 e^{- n/10 }$, we have that  $ | \{ w \in P: \varphi_{N, w} \ge - 4 \delta \log N \} | \ge N^{1 -  \d}$ for any path $P$ with diameter at least $N^{1-\k }$. In what follows, we assume without loss that this event occurs. Thus, for any pair $u, v$ with $|u-v| \ge N^{1-\k }$ we have that
 $$
d_\gamma (u,v) \ge N^{1-\d} \times e^{\gamma (-4 \d \log N)} = N^{1 - (1+4 \gamma)\d} = N^{1-\e}\,.
 $$

Therefore, we get that
 $$
\mu_\gamma\times \mu_\gamma \( \big\{ (u, v):   d_{\gamma}(u, v) \geq N^{1 - \epsilon} \big\} \) \ge \mu_\gamma\times \mu_\gamma \( \big\{ (u, v): |u-v| \ge N^{1-\k  } \big\} \) .
 $$
Combined with Proposition~\ref{shortdist}, this yields that for some $\rho>0$
$$
\lim_{N\to \infty} \P(\mu_\gamma\times \mu_\gamma (\{(u, v): N^{1 - \epsilon} \leq d_{\gamma}(u, v)\} ) \geq 1 - N^{-\rho}) = 1,
 $$
 completing the proof of the lower bound of Theorem~\ref{maintheorem}.

\subsection{Upper bound for Theorem \ref{maintheorem}}

This section is devoted to the upper bound of $d_\gamma (u,v)$. Namely, we prove
 $$
\lim_{N\to \infty} \P \( \mu_\gamma\times \mu_\gamma \( \big\{ (u, v): d_{\gamma}(u, v) > N^{1 + \e} \big\} \) > N^{-\rho} \) = 0.
 $$
 \noindent{\bf Outline of the proof.} We will show that for some fixed $\rho>0$,
 \begin{equation} \label{cgamma}
\lim_{N \to \infty} \P \( \mu_\gamma ( \{ u: \varphi_{N,u} > 2 \gamma \log N \} ) > N^{-\rho}\) = 0 \,.
 \end{equation}
Assume \eqref{cgamma}. By Proposition \ref{shortdist}, it suffices to prove for some fixed $\rho >0$ and a certain $\kappa>0$,
 $$
\lim_{N \to \infty}  \P \( \mu_\gamma\times \mu_\gamma \( \big\{ (u, v): |u-v| > N^{1 - \k }, \varphi_{N,u}, \varphi_{N,v} \le 2 \gamma \log N , d_{\gamma}(u, v) > N^{1 + \e}  \big \} \) > N^{-\rho}\) = 0 .
 $$
By Lemma \ref{denominator} and Markov's inequality, we only need to show for a certain $\tau>0$,
 \begin{equation}  \label{upperbound}
\lim_{N \to \infty} \frac { \sum_{|u-v| > N^{1-\k }} \E e^{\gamma (\varphi_{N,u} + \varphi_{N,v})} 1_{\{ \varphi_{N,u}, \varphi_{N,v} \le 2 \gamma \log N , \ d_\gamma (u,v) > N^{1 + \e} \} } }{ N^{-\rho} e^{ - 2 \tau n} N^4 e^{\gamma^2 n} } = 0\,.
 \end{equation}

To bound the numerator, we consider a pair $u, v$ with $|u-v| > N^{1- \k }$ and write
  \begin{eqnarray}
  & &
\E  e^{\gamma (\varphi_{N,u} + \varphi_{N,v})} 1_{ \{ \varphi_{N,u}, \varphi_{N,v} \le 2 \gamma \log N, \ d_\gamma (u,v) > N^{1 + \e} \} }  \nonumber
  \\ & = &
\E \( e^{\gamma (\varphi_{N,u} + \varphi_{N,v})} 1_{\{   \varphi_{N,u}, \varphi_{N,v} \le 2 \gamma \log N \}}  \P \( d_\gamma (u,v) > N^{1 + \e} | \varphi_{N,u}, \varphi_{N,v} \) \) . \label{integral}
 \end{eqnarray}
Then, we will show that if $|u-v| > N^{1-\k }$, one can find a ``short" path $P$ connecting $u$,$v$, on which the Gaussian variables $\varphi_{N,z}$'s are all small (see Proposition~\ref{goodpath}). Hence, $d_\gamma (u,v) > N^{1+\e}$ with small probability. For convenience, an open path $P$ is called \emph{good} if $\varphi_{N, z} \le 7 \d \log N$ for all $z \in P$. The following proposition on the existence of a good path between a typical pair is a key ingredient.
 \begin{prop} \label{goodpath}
Suppose $0 < \d < 1/2$, $0< \k < \frac 1 4 \d^2$, $0< \b < \frac 1 4 \d^4 \log^2 2 $. Denote
 $$
\ell_0 = k \lfloor \d^2 m \rfloor, \ \ \ \ell_1 =   \lfloor \log_2 ( \lceil N^{1 - 2 \k } \rceil + 2^{\ell_0+ 1})  \rfloor - 1 .
 $$
Then, there exist $p_1 = p_1 (\d, q) \in (0,1)$ and $K_1 = K_1 (\d, q)$ such that the following holds for $K \ge K_1$, $p \ge p_1$ and $N$ large enough. If $|u-v| > N^{1-\k}$, with probability at least $1 - 128(1-p)^{\frac 1 {(2q+1)^2}} - e^{-\b n}$ one can find a good path $P$ connecting $u$, $v$ such that $|P| \le N^{1+2 \d}$. Furthermore, there exist $P^C$, $P^F$ and $Q^\ell$, $\ell = \ell_0 + 1, \ldots, \ell_1$ such that $P \subseteq P^C \cup P^F \cup (\cup_{\ell=\ell_0 + 1}^{\ell_1} Q^\ell )$, and the following (i) and (ii) hold.

\noindent (i) $|P^C| \le 40 N^{(1+\d)\d^2}$, $|P^F| \le N^{1+2\d}$, $|Q^\ell| \le 40 \times 2^{(1+\d) \ell}$, for all $\ell = \ell_0 + 1, \ldots, \ell_1$.

\noindent (ii) $\s_{z,u} + \s_{z,v}$ is less than $4 n \k$ for all $z \in P^F$, and it is less than  $n(1+\k) + 1 - \ell$ for all $z \in Q^\ell$, $\ell=\ell_0 +1, \ldots, \ell_1$. (Recall that $\s_{z, u} = \E \varphi_{N, z} \varphi_{N, u}$).
 \end{prop}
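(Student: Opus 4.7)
The plan is to build the path $P$ by a multi-scale Russo--Seymour--Welsh (RSW) construction, using Theorem~\ref{cardinality} as the main probabilistic input. First I set up dyadic $\ell_\infty$-annuli around $u$ and $v$: for $\ell_0 \le \ell \le \ell_1$, let $A_u^\ell = \{z : 2^\ell < |z-u| \le 2^{\ell+1}\}$ and analogously $A_v^\ell$. Since $|u-v| > N^{1-\kappa}$ and $2^{\ell_1+1}$ sits slightly below $|u-v|$ by the definition of $\ell_1$, the annuli around $u$ and those around $v$ are disjoint. The path $P$ is then assembled from three types of pieces: a \emph{near} piece $P^N$ inside the innermost scale-$\ell_0$ boxes containing $u$ and $v$; \emph{annular} pieces $Q^\ell$ inside $A_u^\ell \cup A_v^\ell$, each consisting of a good open circuit around $u$ (resp.\ $v$) together with a short radial good crossing linking circuits of consecutive scales; and a \emph{far} piece $P^F$ spanning the bulk between $A_u^{\ell_1}$ and $A_v^{\ell_1}$.

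The key geometric ingredient is the dual consequence of Theorem~\ref{cardinality} noted right before Theorem~\ref{thmgoodpath}: in any rectangle of height $\ge N^{1-\kappa}$ there are $\gtrsim N^{1-\delta}$ disjoint horizontal ``good'' cuts (vertices that are open with $\varphi_{N,w} \le 4\delta \log N$). Rescaled to annulus $A_u^\ell$, this yields many good crossings of the $2^\ell \times 2^\ell$ subrectangles forming $A_u^\ell$. A standard RSW gluing \cite{R,SW} then produces a good circuit in $A_u^\ell$ of length at most $C \cdot 2^{(1+\delta)\ell}$, plus a good radial crossing between successive circuits, with per-scale failure probability at most $e^{-c_\delta n}$; taking $C = 40$ and unioning over the two annular families gives $|Q^\ell| \le 40 \cdot 2^{(1+\delta)\ell}$. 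The far piece $P^F$ is a single good crossing of a bulk rectangle with length trivially bounded by its vertex count, $\le N^{1+2\delta}$. For $P^N$, any open path from $u$ (resp.\ $v$) to its scale-$\ell_0$ circuit inside the small box of side $2^{\ell_0} \asymp N^{\delta^2}$ suffices; such a path exists with probability at least $1 - 64(1-p)^{1/(2q+1)^2}$ by supercriticality of the $q$-dependent Bernoulli percolation for $p$ close to $1$, and obviously $|P^N| \le 40 N^{(1+\delta)\delta^2}$.

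The covariance bounds are immediate from Lemma~\ref{covariance}. For $z \in Q^\ell \subseteq A_u^\ell$ one has $|z-u| > 2^\ell$ and $|z-v| \ge |u-v|/2 \ge N^{1-\kappa}/2$, giving
$$
\sigma_{z,u} + \sigma_{z,v} \le (km - \ell) + (km - (1-\kappa)n + 1) \le n(1+\kappa) + 1 - \ell,
$$
using $km \le n$; the symmetric case $z \in A_v^\ell$ is identical. For $z \in P^F$ both $|z-u|$ and $|z-v|$ exceed $N^{1-2\kappa}$, yielding $\sigma_{z,u} + \sigma_{z,v} \le 4\kappa n$ for $n$ large. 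Finally, the total failure probability is dominated by the $128(1-p)^{1/(2q+1)^2}$ cost of the two near pieces plus the $O(n) \cdot e^{-c_\delta n}$ cost from unioning the RSW failures over the $\ell_1 - \ell_0 = O(n)$ scales; the latter is absorbed into $e^{-\beta n}$ provided $\beta < \frac{1}{4}\delta^4 \log^2 2$, as assumed.

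The main obstacle will be the RSW gluing: Theorem~\ref{cardinality} provides existence of many good \emph{vertices} per long path, but one needs connected open paths of controlled length satisfying simultaneously the Gaussian constraint ($\varphi_{N,z} \le 7\delta \log N$) and the openness constraint ($\xi_{N,z}=1$), and then one must splice crossings at different scales while keeping both the length and covariance budgets. The approximate independence of the dyadic increments $\psi_{j,\cdot}$ at different levels together with the $q$-dependence of $\xi$ should provide the independence needed for the union bound to close, but careful bookkeeping at the interfaces between consecutive annular scales will be required.
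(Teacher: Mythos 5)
Your plan takes a genuinely different geometric route from the paper. You build $P$ from concentric dyadic $\ell_\infty$-annuli around $u$ and $v$, with a good circuit at each scale plus radial linking crossings; the paper instead uses a one-sided \emph{telescoping} chain of boxes $B_{u,\ell}$ (left-bottom corner $u + (2^{\ell+1}-2^{\ell_0+1},0)$) growing monotonically away from $u$ toward $v$, for which each ``$B$-crossing'' consists only of a vertical traversal of $B_{u,\ell}$ and a horizontal traversal of the $4\times 1$ rectangle $\acute{B}_{u,\ell}$. The paper's geometry is more economical: since $B_{u,\ell+1}$ and $\acute{B}_{u,\ell}$ share a corner, consecutive crossings glue automatically and no circuits (hence no four-arm RSW combination) are ever needed; this is precisely the content of Lemma~\ref{onetimesfour}. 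Your annulus construction is a perfectly standard alternative and would close, but it is strictly more work: you must produce four matched arm-crossings per circuit plus a radial crossing between consecutive circuits, which is exactly the ``main obstacle'' you flag and is where the paper invests its effort in the box-chain design. Both approaches feed the same covariance computation (your bounds $\sigma_{z,u}+\sigma_{z,v}\le n(1+\kappa)+1-\ell$ on $Q^\ell$ and $\le 4\kappa n$ on $P^F$ coincide with the paper's \eqref{covsum}).

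Two imprecisions worth noting. First, you assert a ``per-scale failure probability at most $e^{-c_\delta n}$''; what Theorem~\ref{cardinality} (applied at scale $L=2^\ell$ as in Lemma~\ref{onetimesfour}) actually delivers is a failure probability of order $e^{-\beta_1 \ell}$ with $\beta_1 = \frac{\log^2 2}{2}\delta^2$, so the bottleneck comes from the smallest scale $\ell_0 = k\lfloor\delta^2 m\rfloor \approx \delta^2 n$, giving a sum over scales of order $e^{-\beta_1\ell_0}\approx e^{-\frac{1}{2}\delta^4(\log^2 2)n}$ — which, happily, is still dominated by $e^{-\beta n}$ for $\beta<\frac14\delta^4\log^2 2$, so the conclusion stands. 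Second, for the near piece $P^N$ you only argue existence of an \emph{open} path from $u$ to the $\ell_0$-circuit; the path must also be \emph{good}, i.e.\ satisfy $\varphi_{N,z}\le 7\delta\log N$. The paper secures this with a separate union bound (event $E_1$ in its proof, cost $\le e^{-\delta^2 n}$) showing that all of $B_{u,\ell_0}$ has small Gaussian values; you need an analogous step.
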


\begin{remark}
(a) The assumption that $|u - v| > N^{1-\kappa}$ is immaterial and merely for the convenience of reusing results obtained in previous sections. (b) The additional properties for the good path in Proposition \ref{goodpath} are for the purpose of controlling how conditioning on the values at the endpoints $u$ and $v$ would change the behavior of the Gaussian values on the path, where $P^C$ and $P^F$ respectively consist of parts close to and far away from the endpoints (see Fig~\ref{G.RSW}).
\end{remark}

Finally, we will set $p = 1$ and show that for $\varphi_{N,u}, \varphi_{N,v} \le 2 \gamma \log N$, the good path given in Proposition \ref{goodpath} leads to $d_\gamma (u,v) \le N^{1+\varepsilon}$. This is incorporated in the following proposition.
 \begin{prop} \label{givenendvalues}
Set $p = 1$. Suppose the assumptions in Proposition \ref{goodpath} hold. Then the following holds when  $0<\delta<\delta_0 (\e)$ for some $\delta_0 (\e) >0$. If $|u-v| \ge N^{1-\k }$ and $x, y \le 2 \gamma \log N$, we have
$$
\P \( d_\gamma (u,v) > N^{1 + \varepsilon} | \varphi_{N,u} = x, \varphi_{N,v} = y \) \le e^{- \b n}\,.
 $$
 \end{prop}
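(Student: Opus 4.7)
The strategy is Gaussian regression. Under the conditioning $\varphi_{N,u}=x$, $\varphi_{N,v}=y$, write $\varphi_{N,z}=m_z+\tilde\varphi_{N,z}$, where $m_z=\alpha_z x+\beta_z y$ is the conditional mean obtained by solving the $2\times 2$ regression system with matrix $\bigl(\begin{smallmatrix}\sigma_u^2 & \sigma_{uv}\\ \sigma_{uv} & \sigma_v^2\end{smallmatrix}\bigr)$ and right-hand side $(\sigma_{zu},\sigma_{zv})^\top$, and $\tilde\varphi_N$ is a centered Gaussian residual independent of $(\varphi_{N,u},\varphi_{N,v})$. Lemma~\ref{covariance} combined with $|u-v|\ge N^{1-\kappa}$ gives $\sigma_{uv}\le\kappa n+O(k)$ while $\sigma_u^2,\sigma_v^2\in[n-O(k),n]$, so inverting the matrix yields $\alpha_z+\beta_z=(1+o(1))(\sigma_{zu}+\sigma_{zv})/n$ and $\alpha_z,\beta_z\le 1+o(1)$. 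Combined with $x,y\le 2\gamma\log N=2\gamma(\log 2)n$, this gives the refined bound $m_z\le(1+o(1))\cdot 2\gamma(\log 2)(\sigma_{zu}+\sigma_{zv})$ together with the crude bound $m_z\le 4\gamma\log N+o(\log N)$.

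\textbf{Finding a good path for the residual.} The next step is to apply (the argument of) Proposition~\ref{goodpath} to $\tilde\varphi_N$ in place of $\varphi_N$. The field $\tilde\varphi_N=\varphi_N-\alpha_\cdot\varphi_{N,u}-\beta_\cdot\varphi_{N,v}$ is a log-correlated Gaussian field whose covariance (the Schur complement) differs from that of $\varphi_N$ by a uniformly bounded additive correction in the bulk, since $\sigma_{uv}=O(\kappa n)$ is of lower order than $\sigma_u^2,\sigma_v^2$. The proof of Theorem~\ref{cardinality} and Proposition~\ref{goodpath}---which uses only the $k$-block-nest geometry, entropy counting, and Gaussian concentration---transfers to $\tilde\varphi_N$ with the same constants up to this bounded correction. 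Since the resulting good-path event depends only on $\tilde\varphi_N$ (which is independent of $(\varphi_{N,u},\varphi_{N,v})$), its conditional probability equals its unconditional one. With $p=1$ we thus obtain, with conditional probability at least $1-e^{-\beta n}$, a path $P$ of length at most $N^{1+2\delta}$ connecting $u$ and $v$ and admitting the decomposition $P\subseteq P^N\cup P^F\cup\bigcup_\ell Q^\ell$ satisfying items (i)--(ii), with $\tilde\varphi_{N,z}\le 7\delta\log N$ for every $z\in P$.

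\textbf{Summation.} On this event,
\[
d_\gamma(u,v)\le\sum_{z\in P}e^{\gamma(m_z+\tilde\varphi_{N,z})}\le N^{7\gamma\delta}\Bigl(\sum_{z\in P^N}e^{\gamma m_z}+\sum_{z\in P^F}e^{\gamma m_z}+\sum_{\ell=\ell_0+1}^{\ell_1}\sum_{z\in Q^\ell}e^{\gamma m_z}\Bigr).
\]
Applying items (i)--(ii) with the bounds on $m_z$: the $P^N$-piece is at most $40\,N^{(1+\delta)\delta^2+4\gamma^2}$ (crude bound $e^{\gamma m_z}\le N^{4\gamma^2}$); the $P^F$-piece is at most $N^{1+2\delta+8\gamma^2\kappa}$ (from $\sigma_{zu}+\sigma_{zv}\le 4\kappa n$); the $Q^\ell$-sum is geometric in $\ell$ with ratio $2^{(1+\delta)-2\gamma^2}>1$ (since $\gamma<1/2$), dominated by $\ell=\ell_1\approx(1-2\kappa)n$ and hence at most $O(1)\cdot N^{1+\delta+O(\kappa)}$. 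Multiplying by $N^{7\gamma\delta}$, each piece is $\le N^{1+\varepsilon}/3$ provided $\delta<\delta_0(\varepsilon,\gamma)$ is small enough (using $4\gamma^2<1$ and $\kappa<\delta^2/4$), which completes the proof.

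\textbf{Main obstacle.} The delicate point is the transfer of Proposition~\ref{goodpath} from $\varphi_N$ to $\tilde\varphi_N$: one must verify that the proofs of Theorem~\ref{cardinality} and Proposition~\ref{goodpath} use the $K$-coarse MBRW covariance only quantitatively, so that the $O(1)$ Schur-complement correction does not disrupt the $k$-block-nest machinery nor the entropy/concentration bounds. Structurally, the covariance bound (ii) is calibrated precisely so that, with $\gamma<\tfrac12$, the $Q^\ell$ geometric series stays subcritical even after absorbing the Gaussian mean shift $e^{\gamma m_z}$; this is where the threshold $\gamma<\tfrac12$ enters the argument essentially.
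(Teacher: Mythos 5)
Your overall plan — Gaussian regression, then control $e^{\gamma\phi_{N,z}}$ on a good path via the $P^N/P^F/Q^\ell$ decomposition with the covariance bounds from Proposition~\ref{goodpath}(ii) — is indeed the paper's strategy, and your exponent bookkeeping for the three pieces (crude bound on $P^N$, the $4\kappa n$ bound on $P^F$, and the geometric-in-$\ell$ sum for $Q^\ell$ controlled by $\gamma<\frac12$) matches the paper's in spirit. However, you deviate at the key structural step, and that deviation contains a real gap.

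You propose to find the good path for the \emph{residual} field $\tilde\varphi_N=\varphi_N-\alpha_\cdot\varphi_{N,u}-\beta_\cdot\varphi_{N,v}$, justifying the transfer of Theorem~\ref{cardinality}/Proposition~\ref{goodpath} by asserting the Schur-complement covariance differs from that of $\varphi_N$ "by a uniformly bounded additive correction in the bulk." This is not correct near $u$ and $v$, and that is precisely where the $P^N$ and small-$\ell$ $Q^\ell$ pieces live: for $z,w$ both within distance $N^{\delta^2}$ of $u$, the correction $\alpha_z\sigma_{w,u}+\beta_z\sigma_{w,v}\approx\sigma_{z,u}\sigma_{w,u}/n$ is of order $n$, not $O(1)$, and the residual variance collapses to $O(\delta^2 n)$ there. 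More fundamentally, the proof of Theorem~\ref{cardinality} is not a soft covariance argument: the $k$-block-nest machinery works \emph{layer by layer} on the explicit decomposition $\varphi_N=\sum_j\psi_j$ (it extracts $T_{j,h}=\max_{B^*_{j,h}}\psi_{j,\cdot}$, uses exact independence across $j$-blocks, etc.), and the conditioning destroys this tree structure. So "transfers ... with the same constants" is not a routine verification but an open hole in your argument.

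The paper sidesteps all of this by never touching the residual field's law: it takes $P$ to be the good path for the \emph{original} field $\{\varphi_{N,w}\}$ given by Proposition~\ref{goodpath} (an event of unconditional probability $\ge 1-e^{-\beta n}$, living on the same probability space as $\phi_N$), and then bounds $\phi_{N,z}=\varphi_{N,z}-a_z\varphi_{N,u}-b_z\varphi_{N,v}+a_zx+b_zy$ on that path using $\varphi_{N,z}\le 7\delta\log N$ for all $z\in P$ (including $z=u,v$), together with $|a_z|+|b_z|\le(\sigma_{z,u}+\sigma_{z,v})/(n(1-\kappa))$. This way Theorem~\ref{cardinality}/Proposition~\ref{goodpath} are applied exactly as stated, to the $K$-coarse MBRW, with nothing to re-verify. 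If you want to salvage your route you would need to genuinely reprove the cardinality theorem for the conditioned field, which is substantially more work than the paper's two-line substitution; I'd recommend switching to the paper's variant.

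One minor remark applying to both routes: the bound on $\phi_{N,z}$ (resp.\ $m_z$) as stated implicitly uses two-sided control on $x,y$ and on $\varphi_{N,u},\varphi_{N,v}$, whereas the hypothesis only gives $x,y\le 2\gamma\log N$ and the good path only gives upper bounds on $\varphi$ along $P$. In a careful write-up one should either note $a_z,b_z\ge 0$ in the relevant regime or intersect with the (overwhelmingly likely) event $\varphi_{N,u},\varphi_{N,v}\ge -O(\log N)$; this is a presentation issue, not a conceptual one, and is orthogonal to the residual-field gap above.
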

This together with  \eqref{integral} and \eqref{expmoment} implies that for $|u-v| \ge N^{1-\k}$,
 \begin{align*}
\E  e^{\gamma (\varphi_{N,u} + \varphi_{N,v})} 1_{ \{ \varphi_{N,u}, \varphi_{N,v} \le 2 \gamma \log N,  d_\gamma (u,v) > N^{1 + \e} \} }
 &\le
e^{- \b n} \E e^{\gamma (\varphi_{N,u} + \varphi_{N,v})} \\
&\le  e^{- \b n} e^{\gamma^2 (2n - \log_2 |u-v|)}
 \le
 e^{- \b n} e^{\gamma^2 n (1+\k )} .
 \end{align*}
It follows that
 \begin{eqnarray*}
 & &
\frac { \sum_{|u-v| > N^{1-\k }} \E e^{\gamma (\varphi_{N,u} + \varphi_{N,v})} 1_{\{ \varphi_{N,u}, \varphi_{N,v} \le 4 \gamma \log N , \ d_\gamma (u,v) > N^{1 + \e} \} } }{ N^{- \rho} e^{ - 2 \tau n} N^4 e^{\gamma^2 n} }
 \\ & \le &
\frac { N^4 e^{- \b n} e^{\gamma^2 n (1+\k )} } { N^{- \rho} e^{ - 2 \tau n} N^4 e^{\gamma^2 n} } =  e^{ - ( \b - \gamma^2 \k - 2 \tau - \rho \log 2) n} .
 \end{eqnarray*}
Therefore, \eqref{upperbound} follows for $\k < \b / \gamma^2$ and $\tau, \rho$ small enough, completing the proof.

The rest of the paper is devoted to the proofs of \eqref{cgamma}, Propositions~\ref{goodpath} and \ref{givenendvalues}.

\subsubsection{Proof of \eqref{cgamma}.}

Suppose $Z \sim N(0,1)$, $b > a > 0$. Then
 \begin{align*}
\E e^{a Z} 1_{\{ Z > b \}} &= \int_b^\infty e^{az} \frac 1 {\sqrt{2\pi}} e^{- \frac 1 2 z^2} d z = \int_b^\infty \frac 1 {\sqrt {2 \pi}} e^{- \frac 1 2 (z-a)^2} e^{\frac 1 2 a^2} d z \\
&= \P(Z > b-a) e^{\frac 1 2 a^2} \le 2 e^{-\frac 1 2 (b-a)^2} e^{\frac 1 2 a^2}.
 \end{align*}
It follows that
 \begin{eqnarray*}
\E e^{\gamma \varphi_{N,u}}  1_{\{ \varphi_{N, u} > 2 \gamma \log N \}}
 & \le &
\E e^{\gamma \sqrt {km} Z} 1_{\{ Z > \gamma \sqrt {km} 2 \log 2 \}}
 \\ & \le &
2 e^{- \frac 1 2 (2 \log 2 - 1)^2 \gamma^2 km} e^{\frac 1 2 \gamma^2 km}  \le 2 e^{- \frac 1 2 (2 \log 2 - 1)^2 \gamma^2 (n-k)} e^{\frac 1 2 \gamma^2 n} .
 \end{eqnarray*}
Therefore,
 \begin{eqnarray*}
\frac {\sum_{u \in V_N} \E e^{\gamma \varphi_{N,u}} 1_{ \{ \varphi_{N,u} > 2 \gamma \log N \}}  }{ N^{-\rho} N^2 e^{\frac 1 2 \gamma^2 n} e^{- \tau n}} \le 2 N^{\rho}  e^{- \frac 1 2 (2 \log 2 - 1)^2 \gamma^2 (n-k)} e^{\tau n}.
 \end{eqnarray*}
By Lemma \ref{denominator} and Markov's inequality, \eqref{cgamma} holds provided $\tau, \rho$ are sufficiently (but fixed) small.

\subsubsection{Proof of Proposition \ref{goodpath}} \label{S.goodpath}

In order to prove Proposition~\ref{goodpath}, we will first show that with high probability there exists a good crossing from the left boundary to the right boundary of a large box. Namely, we prove this for boxes of side length $L = 2^{\ell}$ for all $\ell \ge \ell_0$ (recall $\ell_0 = k \lfloor \d^2 m \rfloor$).

For any rectangle $B$, denote by $\LR (B)$ all paths in $B$ from the left boundary of $B$ to the right one, and by $\UD (B)$ all paths in $B$ from the top boundary of $B$ to the bottom one. If $B$ is a box of side length $L = 2^\ell$, we denote $\cup_{a=0}^3 (B +(a L, 0))$ by  $\acute{B}$. That is, $\acute{B}$ is a rectangle with width $4 L$ and height $L$, which shares the same left bottom corner with $B$. We call $Q$ a $B$-crossing if it consists of two good paths $Q^V \in \UD (B)$ and $Q^H \in \LR (\acute{B})$ with $|Q^V| \le 4 L^{1 + \d}$ and $|Q^H| \le 16 L^{1+\d}$.

 \begin{lemma} \label{onetimesfour}
Suppose $0<\delta<1/2$ and $B$ is a box of side length $L = 2^{\ell}$ with $\ell_0 \le \ell \le n-2k$. Then
 $$
\P \( \mbox{there exists a } B \mbox{-crossing} \) \ge 1 - 9 e^{- \b_1 \ell} ,
\mbox{ where } \b_1 = \frac {\log^2 2} 2 \d^2 \,.
 $$
 \end{lemma}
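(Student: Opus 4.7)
The plan is to derive Lemma~\ref{onetimesfour} by combining a local-scale version of Theorem~\ref{cardinality} with planar vertex-duality. The key intermediate statement to establish is a \emph{local cardinality estimate}: for any sub-box $B' \subseteq V_N$ of side length at most $4L$ (with $L = 2^\ell$), with probability at least $1 - C e^{-\beta_1 \ell}$, every path in $B'$ of diameter at least $L$ contains at least $c L^{1-\delta}$ vertices $w$ satisfying $\varphi_{N,w} \le 7\delta \log N$ and $\xi_{N,w}=1$.

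The local estimate is obtained by rerunning the $k$-block-nest program of Section~\ref{S.blocknest} inside $B'$, using only block scales $jk \le \ell$, with the pivotal scale $j_0^{(\ell)} := \lfloor (1 - \tfrac12\delta^2)\ell/k\rfloor$ in place of the global $j_0$. The contributions $\sum_{jk>\ell} \psi_{j,z}$ from scales exceeding $\ell$ are essentially constant on $B'$: by Lemma~\ref{covariance} their fluctuation across $B'$ has total variance $O(k)$, so on a typical event they shift $\varphi_{N,z}$ uniformly over $B'$ by a quantity of size $O(\sqrt{n\log n})$, which is comfortably absorbed into the $7\delta \log N$ slack. The pigeonhole-over-scales and Gaussian chaining steps culminating in \eqref{pj} then yield an exponent $\beta_1 \ell$ rather than $\beta_1 n$, because both the number of scales entering the union bound and the combinatorial entropy from Proposition~\ref{jkblocks} now scale as $\ell/k$.

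Given the local cardinality estimate, I would convert it to short crossings via Menger's theorem and planar duality. Apply it first inside $B$: every left-right $*$-path in $B$ meets at least $cL^{1-\delta}$ good vertices. If $C$ were a good vertex-cut of size strictly less than $cL^{1-\delta}$ separating top from bottom in the good subgraph of $B$, then by planar vertex-duality there would be a left-right $*$-path in $B$ contained in $\text{bad}\cup C$, whose good vertices are all in $C$; this contradicts the cardinality estimate. Hence Menger's theorem produces at least $cL^{1-\delta}$ vertex-disjoint good top-bottom crossings of $B$, and pigeonholing their total length against $|B| = L^2$ yields one of length at most $4L^{1+\delta}$ (absorbing $c$ by taking $K$ large). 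The same argument applied to $\acute{B}$ (area $4L^2$) and left-right crossings gives a good path in $\LR(\acute{B})$ of length at most $16L^{1+\delta}$. The constant $9$ in the probability bound accrues from union-bounding the cardinality failures for $B$ and for $\acute{B}$ together with a few ancillary bounds (Lemmas~\ref{d2mtom} and~\ref{concentration}) needed to control the high-scale shift and the regularity of $\varphi_{N,\cdot}$.

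The main obstacle is the first step --- localizing Theorem~\ref{cardinality} to scale $L$. One must verify that the heuristic contradiction via average-over-scales still goes through when applied only to the restricted low-scale field $\sum_{jk\le\ell}\psi_{j,\cdot}$; that the combinatorial entropy of the $k$-block-nest program inside $B'$ is bookkept so the exponent $\beta_1 \ell$ remains non-trivial even for $\ell$ near $\ell_0 \approx \delta^2 n$; and that the high-scale shift is small enough on a typical event to preserve the $7\delta \log N$ threshold for ``good'' uniformly over all candidate paths.
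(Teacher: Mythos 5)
Your overall strategy---localize the cardinality estimate to scale $L$, control the contribution of higher scales, then convert to crossings by duality and pigeonhole over disjoint crossings---is the same as the paper's, and the duality endgame you describe matches the paper's min-cut/max-flow argument. But two points need attention.

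First, you propose to re-run the $k$-block-nest program inside $B'$ with a truncated pivotal scale $j_0^{(\ell)}$. The paper avoids this re-derivation entirely: it introduces an auxiliary box $\tilde V_{\tilde N}$ of side length $\tilde N = 2^{k(r+1)}$, where $kr\le\ell+1<k(r+1)$ (so $\tilde N \in (2L, 2KL]$), containing $\acute B$, and applies Theorem~\ref{cardinality} \emph{verbatim} to the field $\{\varphi_{\tilde N,z}: z\in\tilde V_{\tilde N}\}$, which is by definition a fresh $K$-coarse MBRW on a box of size $\tilde N$. Since $k(r+1)>\ell$ and $\beta_1 = \frac{\log^2 2}{2}\delta^2 < 1/10$ for $\delta<1/2$, the theorem's failure probability $6e^{-k(r+1)/10}$ is already at most $6e^{-\beta_1\ell}$, with no re-bookkeeping of entropy or pigeonhole-over-scales. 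This is cleaner and immediately gives the localization at the right exponent.

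Second, and this is the genuine quantitative gap: you assert the high-scale shift is $O(\sqrt{n\log n})$ ``on a typical event'' and absorb it into the $7\delta\log N$ slack. But $\theta_z := \varphi_{N,z}-\varphi_{\tilde N,z}$ has entry-wise variance $k(m-\tilde m)$, which can be of order $n$, so by Lemma~\ref{concentration} a deviation threshold of $\sqrt{n\log n}$ is only exceeded with probability $e^{-\Theta(\log n)}$, i.e.\ polynomially small. The lemma demands failure probability at most $9e^{-\beta_1\ell}$, and $\ell$ can be $\Theta(n)$, so a polynomially small failure probability is insufficient. The paper's Lemma~\ref{orderofXz} takes a much looser threshold: $\E\max_{z\in\tilde V_{\tilde N}}\theta_z = O(\sqrt k)$ because the increments of $\theta$ over $\tilde V_{\tilde N}$ have variance $O(k|z-w|/\tilde N)$ (Lemma~\ref{maxinbox}), and then $\P(\max\theta_z > 2\delta\log N) \le 2e^{-\Theta(n)}\ll e^{-\beta_1\ell}$, precisely because one uses a threshold of size $\Theta(n)$ rather than $\sqrt{n\log n}$. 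Your write-up needs this replacement (and correspondingly the $7\delta$ slack is split as $4\delta$ for the local theorem, $2\delta$ for the shift, and $\delta$ for nearest-neighbor continuity of $\varphi_{\tilde N,\cdot}$, exactly as the paper budgets it); with that correction the argument closes.
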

To prove Lemma \ref{onetimesfour}, the following lemma will be useful.
 \begin{lemma} \label{orderofXz}
Let $\tilde N = 2^{k \tilde m}$, where $1 \le \tilde m < m$. Suppose $\tilde V_{\tilde N}$ is a $\tilde N \times \tilde N$ box, satisfying $\tilde V_{\tilde N} \subset V_N$. Then $\E \max_{z \in \tilde V_{\tilde N}}  ( \varphi_{N,z} -  \varphi_{\tilde N,z} ) \le \sqrt{8 k} C_F$. Furthermore,
\begin{equation}\label{eq-deviationofXz}
\P \( \max_{z \in \tilde V_{\tilde N}}  ( \varphi_{N,z} -  \varphi_{\tilde N,z} ) > 2 \d \log N \) \le 2 e^{- \frac {\d^2 \log^2 2} {2k(m- \tilde m)} n^2 }.
\end{equation}
 \end{lemma}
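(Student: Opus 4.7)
The plan is to view the difference $G_z \df \varphi_{N,z} - \varphi_{\tilde N,z}$ as a Gaussian process indexed by $V_{\tilde N}$ under the natural coupling (where both fields share the same Gaussians $b_{jk,B}$), apply the metric entropy bound from Lemma \ref{maxinbox} to control its expected maximum, and then upgrade to the deviation bound via the concentration inequality Lemma \ref{concentration}.

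First I would observe that under the natural coupling,
\[
G_z \df \varphi_{N,z} - \varphi_{\tilde N,z} \df \sum_{j=\tilde m}^{m-1} \psi_{j,z},
\]
so $\Var(G_z) = k(m-\tilde m)$, and for $z,w \in V_{\tilde N}$ the bound in \eqref{covofpsi} combined with $|z-w| \le \tilde N = 2^{\tilde m k} \le 2^{jk}$ for every $j \ge \tilde m$ gives
\[
\E(\psi_{j,z} - \psi_{j,w})^2 \le 4k \cdot \frac{|z-w|}{2^{jk}}.
\]
Summing the geometric series,
\[
\E(G_z - G_w)^2 \le 4k |z-w| \sum_{j=\tilde m}^{m-1} 2^{-jk} \le \frac{4k}{1-2^{-k}} \cdot \frac{|z-w|}{\tilde N} \le 8k \cdot \frac{|z-w|}{\tilde N}.
\]
Hence $\tilde G_z \df G_z/\sqrt{8k}$ satisfies $\E(\tilde G_z - \tilde G_w)^2 \le |z-w|/\tilde N$, and Lemma \ref{maxinbox} applied on the box $V_{\tilde N}$ of side length $\tilde N$ yields $\E \max_{z \in V_{\tilde N}} \tilde G_z \le C_F$, i.e., $\E \max_{z \in V_{\tilde N}} G_z \le \sqrt{8k}\, C_F$, which is the first claim.

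For \eqref{eq-deviationofXz}, I would apply Lemma \ref{concentration} with the Gaussian field $\{G_z : z \in V_{\tilde N}\}$ of variance at most $k(m-\tilde m)$. For $n$ large enough (depending on $k$ and $\delta$) we have $\sqrt{8k}\,C_F \le \delta \log N$, so
\[
2\delta \log N - \E \max_{z \in V_{\tilde N}} G_z \ge \delta \log N = \delta n \log 2,
\]
and therefore
\[
\P\!\left(\max_{z \in V_{\tilde N}} G_z > 2\delta \log N\right) \le 2 \exp\!\left(-\frac{(\delta n \log 2)^2}{2k(m-\tilde m)}\right) = 2 \exp\!\left(-\frac{\delta^2 \log^2 2}{2k(m-\tilde m)}\,n^2\right),
\]
as desired. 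The only mildly delicate step is the Fernique/Dudley-type application of Lemma \ref{maxinbox}: one must cleanly sum the level-wise contributions so that the $\ell_\infty$-metric in $V_{\tilde N}$ is normalized by $\tilde N$ rather than by $2^{\tilde m k}\cdot (m-\tilde m)$, which is what allows the $O(\sqrt{k})$ (as opposed to $O(\sqrt{k}(m-\tilde m))$) bound on the expectation, and in turn the correct dependence on $m-\tilde m$ in the exponent of the deviation estimate.
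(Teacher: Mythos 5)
Your proof is correct and follows essentially the same route as the paper: decompose $\varphi_{N,z}-\varphi_{\tilde N,z}=\sum_{j=\tilde m}^{m-1}\psi_{j,z}$, use \eqref{covofpsi} to get the increment bound $\E(G_z-G_w)^2\le 8k|z-w|/\tilde N$, apply Lemma \ref{maxinbox} for the expected maximum, and then Lemma \ref{concentration} (together with $\sqrt{8k}C_F\le\delta\log N$ for $n$ large) for the deviation bound. The only cosmetic difference is that you bound $\E(\psi_{j,z}-\psi_{j,w})^2$ level by level and sum, while the paper bounds the covariance $\E\theta_z\theta_w$ and subtracts; these are the same computation.
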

\begin{proof} Denote
 $$
\theta_z := \varphi_{N,z} - \varphi_{\tilde N, z} = \mbox{$\sum_{j= \tilde m }^{m - 1}$} \psi_{j,z}.
 $$
Then $\theta_z \sim N(0, k(m-\tilde m))$. For $z, w \in \tilde V_{\tilde N}$, $|z-w| < 2^{k \tilde m}$, thus by \eqref{covofpsi},
 \begin{eqnarray*}
\E \theta_z \theta_w
 & = &
\sum_{j= \tilde m}^{m-1} \E \psi_{j,z} \psi_{j,w} \ge k \sum_{j=\tilde m}^{m-1}  \( 1 - 2 \frac {|z-w|} {2^{jk}} \)
 \\ & = &
k (m-\tilde m ) - 2 k |z-w| \sum_{j = \tilde m}^{m-1} 2^{-jk} = k (m-\tilde m) - \frac {4k}{2^{k \tilde m }} |z-w| .
 \end{eqnarray*}
It follows that
 $$
\E (\theta_z - \theta_w)^2 \le  \tfrac {8k}{2^{k \tilde m }} |z-w| = 8k \tfrac { |z-w| }{\tilde N} .
 $$
By Lemma \ref{maxinbox}, we have
 $$
\E \max_{z \in \tilde  V_{\tilde N}} \theta_z \le  \sqrt {8k} C_F .
 $$
At this point, \eqref{eq-deviationofXz} follows from the preceding inequality and an application of Lemma~\ref{concentration}.
\end{proof}

\begin{proof}[Proof of Lemma \ref{onetimesfour}]
Suppose $ kr \le \ell + 1 < k (r+1) $, where $\lfloor \d^2 m\rfloor \le r \le m-2$. Denote $\tilde N = 2^{k (r+1)}$. We would like to mention that as $m \to \infty$, one has $\ell_0 \to \infty$ hence $r \to \infty$.

Denote by $\UD_c (B)$ the collection of all cut sets that separate the top and bottom boundaries of $B$ (with respect to the induced graph on $B$), and by $\LR_c(\acute{B})$ the collection of all cut sets that separate the left and right boundaries of $\acute{B}$ (with respect to the induced graph on $\acute{B}$). We will first show
\begin{equation} \label{crosswhp}
 \P \( E \) \ge 1 - 9 e^{- \b_1 \ell} ,
 \end{equation}
where
 $$
E := \left\{ \big| \{ w \in \mathcal C: \varphi_{N, w}  \leq 7 \delta \log N, \ \xi_{N,w} = 1 \} \big| \ge \tilde N^{1-\d}/4, \mbox{ for all }\mathcal C \in \UD_c (B) \cup \LR_c (\acute{B}) \right\} .
 $$

Let $\tilde V_{\tilde N}$ be the box of side length $\tilde N$ and sharing the same left bottom corner with $B$. Then $\acute{B} \subseteq \tilde V_{\tilde N}$. Consider the Gaussian field $\{ \varphi_{\tilde N, z} : z \in \tilde V_{\tilde N} \}$.   By \eqref{eq-continuity}, the event
 $$
E_1 : = \left\{ | \varphi_{\tilde N, z} - \varphi_{\tilde N, w} | \le \d \log N \mbox{ for all } z, w \in \tilde V_{\tilde N}, z \sim w \right\}
 $$
happens with probability at least $1 - e^{- k(r+1)/ 10} \ge 1 - e^{- \zeta_1 \ell}$, where we recall that $\zeta_1  =\frac {\log^2 2} 2 \d^2< 1/10$. By Lemma \ref{orderofXz}, the event
 $$
E_2 : = \left\{ \max_{z \in \tilde V_{\tilde N}} (\varphi_{N,z} - \varphi_{\tilde N, z}) \le 2 \d \log N   \right\}
 $$
happens with probability $1- 2 e^{- \frac {\d^2 \log^2 2}{2k (m-r-1)} n^2} \ge 1 - 2 e^{- \frac {\d^2 \log^2 2 }2 n} \ge 1 - 2 e^{- \zeta_1 \ell} $. For any set $A$, denote
 $$
\Psi(A) : = \left\{ v \in A : \varphi_{\tilde N, v} \leq 4 \delta \log N, \ \xi^*_{N,v} = 1  \right\} ,
 $$
where $\xi^*_{N,w} = 1$ if $\xi_{N,z} = 1$ for all $|z-w| \le 1$; and $\xi^*_{N,w} = 0$ otherwise. Note $\{ \xi^*_{N,w}, w \in V_N \}$ is $(q+2)$-dependent, and $P(\xi^*_{N,w} = 1) \ge p_0 (\d, q+2)$ if we set $p_1 (\d, q)$ such that $1 - 9 (1 - p_1 (\d, q)) \ge p_0 (\d, q+2)$, i.e.
 $$
p_1 (\d, q) \ge 1 - (1-p_0 (\d,q+2)) / 9 .
 $$
By Theorem~\ref{cardinality} (applied to the box $\tilde V_{\tilde N}$), since $r$ is large enough, with probability at least $1 - 6 e^{-k(r+1)/10} (\ge 1 - 6 e^{-\b_1 \ell})$, we have the event
 $$
E_3 : = \{ | \Psi(P)| \ge \tilde N^{1-\d} \mbox{ simultaneously for all paths } P \subseteq \tilde V_{\tilde N} \mbox{ with distance at least } \tilde N^{1-\kappa} \}
 $$
happens, for $K \ge K_1 (\d, q) : = K_0 (\d, q+2)$ and $p \ge p_1 (\d, q)$. Then, we have $\P (E_1 \cap E_2 \cap E_3) \ge 1 - 9 e^{- \zeta_1 \ell}$. In order to prove \eqref{crosswhp}, it remains to prove that $E_1 \cap E_2 \cap E_3 \subset E$.

Suppose $E_1, E_2, E_3$ all happen. We call a sequence of vertices $P_0, P_1, \ldots, P_h$ a {\em $*$-sequence} if $|P_i - P_{i+1}| = 1$ for all $0\leq i< h$, and a {\em $*$-path} if in addition $|P_i - P_{i^\prime}| > 1$ for $|i^\prime - i| > 1$.  Suppose $\C \in {\LR}_c (\acute{B})$. Then it  contains a $*$-sequence $\tilde P = (\tilde P_0, \tilde P_1, \ldots, \tilde P_{\tilde h})$ from the top boundary of $\acute{B}$ to the bottom one. We extract a $*$-path $P^* = (P^*_0, P^*_1, \ldots, P^*_h) = (\tilde P_{s_0}, \tilde P_{s_1}, \ldots, \tilde P_{s_h})$ by defining $s_0 := 0$, $s_{i+1} : = \inf \{ s \ge s_i + 1 : |\tilde P_t - P^*_i| > 1, \forall t > s\}$ till $P^*$ hits the bottom boundary. We then take a path $P$ such that $P \supset P^*$, by inserting one vertex between $P^*_s$ and $P^*_{s+1}$ when they are not neighbours step by step for $s=0,1,\cdots, h-1$. Note if $P^*_s$ and $P^*_{s+1}$  are not neighbours, they have two common neighbours, at most one of which is used in the steps $0,1,\ldots, s-1$. So we can succeed in finding the path $P$. Let
 $$
Q : = \{ v \in P^* : v \mbox{ is in } \Psi (P) \mbox{ or neighbouring } \Psi(P) \} .
 $$
Then we have $|Q| \geq |\Psi(P)|/4$. For any  $w \in Q$, by the definition of $\Psi (P)$ and $Q$, we have $\varphi_{\tilde N, w} \le 5 \d \log N$ on $E_1$. This together with $E_2$ implies that $\varphi_{N,w} \le 7 \d \log N$. By the definitions of $\Psi (P)$, $\xi^*_{N,w}$ and $Q$, we have $\xi_{N,w} = 1$, $\forall w \in Q$. It follows that
 $$
|\{ w \in \C : \varphi_{N, w} \le 7 \d \log N, \ \xi_{N,w} = 1 \}| \ge |Q| \ge \tilde N^{1-\d} / 4,
 $$
where the last inequality holds on $E_3$. The same reasoning implies that the above event also holds for all $\C \in {\cal UD}_c (B)$. Hence \eqref{crosswhp} holds.

By min-cut-max-flow theorem, on $E$, there are at least $\tilde N^{1-\d}/4$ disjoint good paths in $\UD (B)$. The shortest one, denoted by $Q^V$, has cardinality at most $4 L^2 / \tilde N^{1-\d} \le 4 L^{1+\d}$, since $|B| = L^2$. By the same reasoning, there exists a good path $Q^H$ in $\LR (\acute{B})$ with cardinality at most $16 \times L^{1 + \d}$. This completes the proof of the lemma.
\end{proof}

\begin{proof}[Proof of Proposition~\ref{goodpath}]
Recall $u = (u_1, u_2)$ and $v = (v_1, v_2)$. Without loss of generality, we suppose $v_1 - u_1 = |u-v| \ge N^{1-\k }$. Let
 $$
L_u = \{ z = (z_1, z_2) : z_1 = u_1 + \lceil N^{1 - 2 \k} \rceil \}, \ \ \ L_v = \{ z = (z_1, z_2) : z_1 = v_1 - \lceil N^{1 - 2 \k} \rceil \} .
 $$
We will first show that one can find a good path in $V_N$ from $u$ to $L_v$, using a Russo-Seymour-Welsh type of technique.  Without loss of generality, we suppose $u_2 < N/2$. Recall $\ell_0 = k  \lfloor \d^2 m \rfloor$. Let $B_{u,\ell}$ (for  $\ell \ge \ell_0$) be the box of side length $2^\ell$ and with the left bottom corner $u + ( 2^{\ell + 1} - 2^{\ell_0 + 1}, 0)$. Then, $B_{u, \ell+1}$ and $\acute{B}_{u,\ell}$ share the same right bottom corner.  Let $\ell_2$ be the minimum $\ell$ such that $\acute{B}_{u,\ell}$ intersects $L_v$, i.e.,
 $$
\ell_2 : = \lceil \log_2 \frac{v_1 - u_1 - \lceil N^{1-2 \k} \rceil + 2^{\ell_0+ 1} + 1 }{6} \rceil .
 $$

By a simple union bound, we see that
 $$
\P ( E_1^c ) \leq 2^{2\ell_0} 2 e^{- 24 \delta^2 (\log^2 2) n} \leq e^{-\delta^2 n} ,
 $$
where
 $$
E_1 = \left\{  \varphi_{N,z} \le 7 \d \log N, \ \forall  z \in B_{u, \ell_0} \right\} .
 $$
Denote
 $$
E_2 = \left\{ \mbox{there exists an open path in } B_{u, \ell_0} \mbox{ from } u \mbox{ to } u + (0, 2^{\ell_0}) \right\} .
 $$
Note $u+ (0, 2^{\ell_0})$ is the left top corner of $B_{u, \ell_0}$. Then on $E_2^c$, one can find a closed cut set (and thus a $*$-path) in $B_{u, \ell_0}$, separating $u$ and $u+(0, 2^{\ell_0})$. On the one hand, there are at most $C_h = 2 h \times 8^h$ possible $*$-paths in $B_{u, \ell_0}$ with length $h$. On the other hand, suppose $P^*$ is a $*$-path with length $h$. Then we can find $Q \subset P^*$ such that $|Q| \ge \lceil |P^*| / (2q+1)^2 \rceil$ and each pair of $Q$ has distance at least $q+1$. It follows that
 $$
\P (E_2^c) \le \sum_{h=1}^\infty C_h \times (1-p)^{\lceil \frac h {(2q+1)^2} \rceil} \le \sum_{h=1}^\infty 2h \times ( 8 (1-p)^{ \frac 1 {(2q+1)^2} } )^h \le \frac{ 16 (1-p)^{ \frac 1 {(2q+1)^2} } }{(1-  8 (1-p)^{ \frac 1 {(2q+1)^2} } )^2} \le 64 (1-p)^{\frac 1 {(2q+1)^2}},
 $$
provided that $8 (1-p)^{\frac 1 {(2q+1)^2}} \le \frac 1 2$. Thus the above inequality holds for $p$ great than or equal to
 $$
p_1 (\d, p) := \max \left\{ 1 - \frac 1 9 ( 1-p_0 (\d, q+2)) , \ 1 - 16^{- (2q+1)^2} \right\} .
 $$
Denote
 $$
E_3 = \{ \mbox{there exists a } B_{u, \ell}\mbox{-crossing}, \forall \ell = \ell_0, \ldots, \ell_2 \} .
 $$
Then by the above reasoning as well as Lemma \ref{onetimesfour}, we conclude
 $$
\P(E_1 \cap E_2 \cap E_3) \ge 1 - e^{-\d^2 n} - 64 (1-p)^{\frac 1 {(2q+1)^2}}  - \frac {9}{1 - e^{-\zeta_1}} e^{-\zeta_1 k \lfloor \d^2 m \rfloor} \ge 1 - 64 (1-p)^{\frac 1 {(2q+1)^2}} - \frac 1 3 e^{- \zeta n},
 $$
where the last inequality holds for $\zeta < \zeta_1 \d^2$.

Next, we are going to check that on the event $E = E_1 \cap E_2 \cap E_3$ we can find a ``short" good path from $u$ to $L_v$. Assume $E$ occurs. Denote  $Q^{u,\ell} = Q^{u,\ell,V} \cup Q^{u,\ell,H}$ the $B_{u,\ell}$-crossings. On $E_2$, we can find an open path in $B_{u, \ell_0}$ from $u$ to $u + (0,2^{\ell_0})$. It is furthermore a good path since $E_1$ occurs. We choose it as  $Q^{u, \ell_0,V}$.  Then,
 $$
\( \cup_{\ell=\ell_0}^{\ell_2 - 1} Q^{u,\ell} \) \cup Q^{u,\ell_2,V} \cup \( Q^{u, \ell_2, H} \cap \{ z : z_1 \le v_1 - \lceil N^{1-2\k} \rceil \} \)
 $$
contains a good path $P^u$ from $u$ to  $L_v$. In addition,
 $$
P^u \subseteq V_N \cap \{ z : z_1 \le v_1 - \lceil N^{1-2\k} \rceil \}
 $$
since $B_{u, \ell_2}$ is located at the left of $L_v$ and has height $2^{\ell_2} < \frac N 2 < N - u_1$. The good path $P^u$ has cardinality
 $$
|P^u| \le 20 \( (2^{\ell_0})^{1 + \d} + \ldots + (2^{\ell_2})^{1 + \d}  \) = 20 \mbox{$\sum_{\ell=\ell_0}^{\ell_2}$} (2^{1+\d} )^\ell \le 20 \times 2^{(1+\d) (\ell_2+1)} \le 80  N^{1+\d} .
 $$
By symmetry, with probability at least $1 - 64 (1-p)^{\frac 1 {(2q+1)^2}} - \frac 1 3 e^{- \zeta n}$, there exists a good path $P^v$ from $v$ to $L_u$, which has cardinality at most $ 80 N^{1+\d}$. Denote
 $$
B_{u,v} : = \{ w \in  V_N : u_1 + \lceil N^{1 - 2 \k } \rceil \le w_1 \le v_1 - \lceil N^{1 - 2 \k } \rceil  \} .
 $$
Then each path in $\LR (B_{u, v})$ has distance at least $2^{(1 - \k)  n} - 2 \times 2^{(1 - 2 \k ) n} > 2^{(1 - 2 \k ) n}$. By Theorem \ref{cardinality} and the same reasoning in the proof of Lemma~\ref{onetimesfour}, with probability at least $1 - \frac 1 3 e^{- \zeta n}$ there exists a good path $P^V$ in $\UD (B_{u,v})$ with cardinality at most $4 N^{1+\d}$. Provided paths $P^u$, $P^v$ and $P^V$,  there exists a good path $P$ connecting $u$ and $v$ such that
 $$
|P|  \le 160 \times N^{1+\d} + 4 N^{1+\d} \le N^{1 +2 \d} .
 $$
This event happens with probability at least $1 - 128 (1-p)^{\frac 1 {(2q+1)^2}} - e^{-\b n}$ for $\b < \b_1 \d^2$.

Finally, we decompose $P$ into three parts $P^C$, $P^F$ and $Q$ (which may overlap) as stated in the proposition. Let $P^C$ consist of $Q^{u, \ell_0}$ as well as its analog $Q^{v, \ell_0}$, $Q$ consist of $Q^{u,\ell}$ as well as the analogs $Q^{v,\ell}$ for all $\ell=\ell_0 + 1, \ldots, \ell_1$, and $P^F = P \setminus (P^C \cup Q)$. We would like to mention that $B_{u, \ell_1 + 1}$ is totally at the right of $L_u$, and $B_{v, \ell_1 + 1}$ is totally at the left of $L_v$.

\begin{figure}[h!]
 \includegraphics[width=17cm]{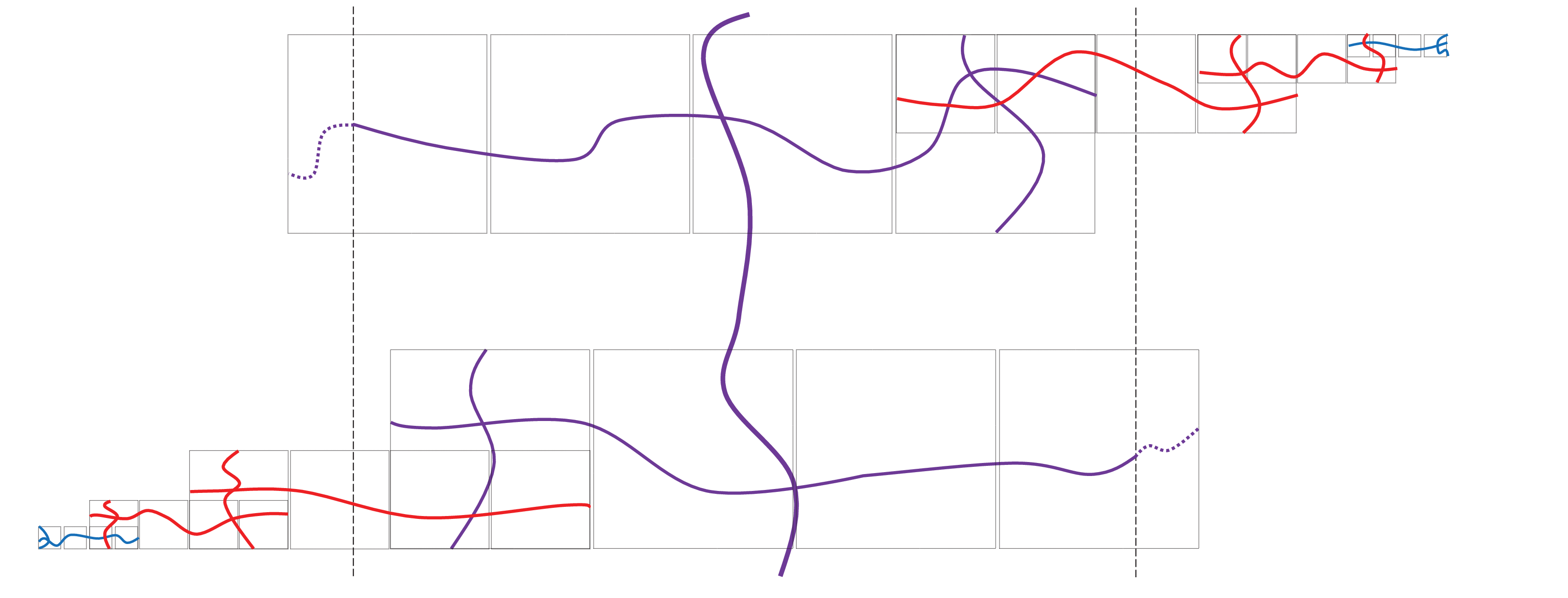}
\\ \vspace{-1cm}  \caption{$u$ is at the left bottom corner of the figure, and $v$ is at the right top corner. The left dashed line is $L_u$, and the right one is $L_v$.  $P^C$ is blue, $Q^\ell$'s are red, and $P^F$ is purple, where the parts with dot-line will be removed. }\label{G.RSW} \end{figure}

Observe that $|P^C| \le |Q^{u, \ell_0}| + |Q^{v, \ell_0}| \le 40 \times 2^{\ell_0 (1 + \d)} \le 40 \times  2^{(1+\d) \d^2 n} = 40N^{(1+\d) \d^2} $. This, together with the facts that $P^F \subset P$, and $Q^\ell : = Q^{u,\ell} \cup Q^{v,\ell}$ has cardinality at most $40 \times  2^{(1+\d) \ell}$, implies (i) of Proposition \ref{goodpath}. For (ii), suppose $z \in Q^{u,\ell}$, for $\ell = \ell_0 + 1, \ldots, \ell_1$. We have $|z-u| \ge 2^\ell$, and $|z-v| \ge \frac 1 2 N^{1 - \kappa}$. This together with Lemma \ref{covariance} implies that
 $$
\s_{z,u} + \s_{z,v} \le   n (1+\k) + 1 - \ell .
 $$
The same result holds for $z \in Q^{v,\ell}$ by symmetry. For any $z \in P^F$, note that $B_{u,\ell_1+1}$ is located at the right of $L_u$, and that $P^u$ is located at the left of $L_v$. It follows that $P^u \setminus (P^C \cup Q) \subset B_{u,v}$. By symmetry, $P^v \setminus (P^C \cup Q) \subset B_{u,v}$. These together with $P^V \in \UD (B_{u,v})$ imply that $P^F \subset B_{u,v}$.
It follows that for any $z \in P^F$, $|z-u|, |z-v| \ge N^{1 - 2 \k}$, which implies that $\s_{z,u} + \s_{z,v} \le 4 n \k$. Thus, we complete the proof of Proposition \ref{goodpath}.
\end{proof}

\subsubsection{Proof of Proposition \ref{givenendvalues}}

Set
 $$
a_z = \frac {km \s_{z,u} -  \s_{z,v} \s_{u,v}}{(km)^2 - \s_{u,v}^2} , \mbox{ and } b_z = \frac{km \s_{z,v} - \s_{z,u} \s_{u,v}}{(km)^2 - \s_{u,v}^2} \,.
 $$
We see that $G_z : = \varphi_{N,z} - (a_z \varphi_{N,u} + b_z \varphi_{N,v})$ is independent of $(\varphi_{N,u}, \varphi_{N,v})$. Denote
 $$
\phi_{N,z} = \varphi_{N,z} - ( a_z \varphi_{N,u} + b_z \varphi_{N,v})  +  (a_z x + b_z y) .
 $$
Then, the conditional distribution of $\{ \varphi_{N,z} : z \in V_N \} $ given $\varphi_{N,u} = x, \varphi_{N,v} = y$ is the same as the distribution of $\{ \phi_{N,z} : z \in V_N \}$. That is,
 $$
{\cal L} ( \{ \varphi_{N,z} : z \in V_N \} \mid \varphi_{N,u} = x, \varphi_{N,v} = y) = {\cal L} (\{ \phi_{N,z} : z \in V_N \}) .
 $$
Hence, we consider the FPP distance between $u$ and $v$ according to the Gaussian field $\{ \phi_{N,z} : z \in N \}$, which is denoted by $d_{\gamma, \phi} (u,v)$.

Let $P$ be the good path (with respect to the field $\{\varphi_{N,w}: w\in V_N\}$) given in Proposition \ref{goodpath}, which exists with probability at least $1- e^{- \b n}$ (recall $p=1$). Clearly, $d_{\gamma, \phi} (u,v) \le \sum_{z \in P} e^{\gamma \phi_{N,z}}$. For all $z \in P$, by the definition of the good path and the assumption $x,y \le 2 \gamma \log N$,
  $$
\phi_{N,z} \le (1 + |a_z| + |b_z|) 7 \d \log N + (|a_z| + |b_z|) 2 \gamma \log N .
 $$
By Lemma \ref{covariance} and $|u-v| > N^{1-\k}$, one has $km - \s_{u,v} \ge n ( 1-\k )$. Therefore,
 \begin{equation} \label{Eq.azplusbz}
|a_z| + |b_z| \le \frac {(km +\s_{u,v})(\s_{z,u} + \s_{z,v})}{(km)^2 - \s_{u,v}^2}  =  \frac { \s_{z,u} + \s_{z,v} }{km - \s_{u,v}} \le \frac {\s_{z,u} + \s_{z,v}}{n (1-\k)} ,
 \end{equation}
which is less than $\frac 2 {1-\k}$ since $\s_{z,u}, \s_{z,v} \le n$. It follows that
 $$
e^{\gamma \phi_{N,z}} \le N^{ \frac {3-\k} {1-\k} 7 \d \gamma} N^{(|a_z| + |b_z|) 2 \gamma^2} .
 $$

Take $\d$ (consequently, $\k$) small enough, and we only need to check
 \begin{equation} \label{dphi}
\sum_{z \in P} N^{(|a_z| + |b_z|) 2 \gamma^2} < N^{1 + \frac1 2 \e}.
 \end{equation}
To this end, let $P^C$, $P^F$, $Q^\ell$, (for $\ell = \ell_0 + 1, \ldots, \ell_1$) be as given in Proposition \ref{goodpath}. Then
 $$
\sum_{z \in P} N^{(|a_z| + |b_z|) 2 \gamma^2} \le \sum_{z \in P^C} N^{(|a_z| + |b_z|) 2 \gamma^2} + \sum_{z \in P^F} N^{(|a_z| + |b_z|) 2 \gamma^2} + \sum_{\ell=\ell_0 + 1}^{\ell_1} \sum_{z \in Q^\ell} N^{(|a_z| + |b_z|) 2 \gamma^2}  .
 $$
Next we will show that each term in the right hand side above is less than $\tfrac 1 3 N^{1 + \frac 1 2 \e}$, which implies \eqref{dphi}.

For $z \in P^C$, we use $|a_z| + |b_z| \le \frac 2 {1-\k}$. This together with  (i) of Proposition \ref{goodpath}  implies that
 $$
\sum_{z \in P^C}  N^{(|a_z| + |b_z|) 2 \gamma^2} \le 40 N^{(1+\d)\d^2 + \frac 2 {1-\k} \times 2 \gamma^2 } \le \frac 1 3 N^{1 + \frac 1 2 \e},
 $$
where we use $\gamma < \frac 1 2$ as well as $\d$ and $\k$ being small enough.

For $z \in P^F$,  Proposition \ref{goodpath}  and \eqref{Eq.azplusbz} imply that
 $$
\sum_{z \in P^F}  N^{(|a_z| + |b_z|) 2 \gamma^2} \le N^{1 + 2 \d + \frac {4 \k} {1 - \k} \times 2 \gamma^2} \le \frac 1 3 N^{1 + \frac 1 2  \e} .
 $$

For $z \in Q^\ell$, Proposition \ref{goodpath} and \eqref{Eq.azplusbz} imply that
 $$
\sum_{z \in Q^\ell}  N^{(|a_z| + |b_z|) 2 \gamma^2} \le 40 \times 2^{(1+\d) \ell} N^{  \( \frac {n(1+\k) + 1 - \ell} {n (1-\k)} \) 2 \gamma^2 } = 40 N^{\frac {1+\k}{1-\k} 2 \gamma^2} 2^{\frac 1{1-\k} 2 \gamma^2} \( 2^{1+\d - \frac {1}{1-\k} 2 \gamma^2} \) ^\ell.
 $$
It follows that for $C = \tfrac {40 \times  2^{\frac 1{1-\k} 2 \gamma^2}} { 2^{1+\d - \frac {1}{1-\k} 2 \gamma^2} -1} $,
 \begin{eqnarray*}
\sum_{\ell= \ell_0+1}^{\ell_1} \sum_{z \in Q^\ell} N^{(|a_z| + |b_z|) 2 \gamma^2}
 & \le &
C N^{ \frac {1+\k}{1-\k} 2 \gamma^2 }   \(  2^{1+\d - \frac {1}{1-\k} 2 \gamma^2} \) ^{\ell_1+1} .
 \end{eqnarray*}
Recall $\ell_1 = \lfloor \log_2 ( \lceil N^{1-2\k} \rceil + 2^{\ell_0 + 1}) \rfloor - 1$, where $\ell_0 \le \d^2 n$. Thus, $2^{\ell_1 + 1} \le \lceil N^{1 - 2 \k} \rceil + 2 N^{\d^2} \le N$. Therefore,
 $$
\sum_{\ell=\ell_0+1}^{\ell_1} \sum_{z \in Q^\ell} N^{(|a_z| + |b_z|) 2 \gamma^2} \le C N^{ \frac {1+\k}{1-\k} 2 \gamma^2 } N^{1+\d - \frac {1}{1-\k} 2 \gamma^2  } = C  N^{1+\d +  \frac {\k}{1-\k} 2 \gamma^2 }  \leq  \frac 1 3 N^{1 + \frac 1 2 \e},
 $$
provided that $N$ is sufficiently large. Altogether, this completes the proof of the proposition.

\end{document}